\newtheorem{thm}{Theorem}
\newtheorem{cor}[thm]{Corollary}
\newtheorem{theorem}[thm]{Theorem}
\newtheorem{lemma}[thm]{Lemma}
\newtheorem{question}[thm]{Question}
\theoremstyle{definition}
\newtheorem*{definition*}{Definition}
\newtheorem{remark}[thm]{Remark}
\newcommand{\N}{\mathbb{N}}
\newcommand{\Z}{\mathbb{Z}}
\newcommand{\M}{\operatorname{Mod}}
\def\cl{{\rm cl}}
\def\scl{{\rm scl}}
\def \eu{{\rm{e}}}
\newcommand{\QED}{\hfill \ensuremath{\Box}}
 \def\Z{{\mathbb{Z}}}
 \def\N{{\mathbb{N}}}
\begin{document}

\title[Geography of surface bundles over surfaces]
{Geography of surface bundles over surfaces}

\author[R. \.{I}. Baykur]{R. \.{I}nan\c{c} Baykur}
\address{Department of Mathematics and Statistics, University of Massachusetts, Amherst, MA 01003-9305, USA}
\email{inanc.baykur@umass.edu}

\author[M. Korkmaz]{Mustafa Korkmaz}
\address{Department of Mathematics, Middle East Technical University, 06800 Ankara, Turkey}
\email{korkmaz@metu.edu.tr}

\begin{abstract}
We construct symplectic surface bundles over surfaces with positive signatures for all but $18$ possible pairs of fiber and base genera.  Meanwhile,  we determine the commutator lengths of a few new mapping classes. 
\end{abstract}

\maketitle

\setcounter{secnumdepth}{2}
\setcounter{section}{0}

\section{Introduction} \label{sec:introduction}

Surface bundles over surfaces constitute an interesting family of  $4$--manifolds, which is amenable to techniques from different areas of mathematics, such as  algebraic geometry, symplectic topology and geometic group theory.  Let $\Sigma_g$ denote a closed orientable surface of genus $g$ and  let $\sigma$ denote the signature of a $4$--manifold.  Surface bundles with $\sigma=0$ are certainly easy to generate for any fiber and base genera;  in fact,  a $\Sigma_g$--bundle over $\Sigma_h$ has\, $\sigma = 0$ in many situations,  such as when $\pi_1(\Sigma_h)$ acts trivially on $H^*(\Sigma_g)$,  when the fibration is hyperelliptic (in particular when $g\leq 2$), or when the base genus $h \leq 1$\, \cite{ChernEtal, EndoHyperelliptic, Meyer}.  Further,  $\sigma \equiv 0$ (mod $4$) for any surface bundle  \cite{Meyer}. 

Our goal in this article is to provide a comprehensive answer to the following outstanding geography problem:

\smallskip
\textit{For which pairs of $(g,h) \in \N^2$ are there $\Sigma_g$--bundles over $\Sigma_h$ with  signature $\sigma > 0$?}

\smallskip
\noindent This problem on surface bundles has a long and rich history going back to the pioneering works of  Kodaira, Atiyah and Hirzebruch in the late 1960s \cite{Kodaira, Atiyah, Hirzebruch}, who produced the first examples of surface bundles with $\sigma > 0$ via branched coverings of products of complex curves, albeit for fairly large fiber or base genera. Endo's innovative work on signatures of surface bundles in the late 1990s \cite{Endo, EndoHyperelliptic}, via Meyer's cocycle and relations in the mapping class group, made it possible to approach this geography problem more systematically. Over the past two decades,  these methods have led to a myriad of examples of surface bundles with positive signatures; see e.g.  \cite{BaykurNonholomorphic, BeyazEtal, BryanDonagi, BryanDonagiStipsicz, CataneseRollenske, EKKOS, GonzalesEtal,  Lee, LeeEtal, Monden, Stipsicz, Zaal}. 

Our main result is an extensive  advancement in this line of research:

\begin{theorem} \label{main}
There exists a  symplectic $\Sigma_g$--bundle over $\Sigma_h$ with positive signature for every \,$g \geq 15$, $h=2$; \ $g \geq 9$, $h=3$; \ $g \geq 4$, $h =4$; \,and $g \geq 3 $, $h \geq 5$.
\end{theorem}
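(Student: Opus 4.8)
\emph{Overview and reduction to seed bundles.}
The plan is to isolate a few explicit positive-signature ``seeds'' and then spread them over the claimed region by fiber sums, with Meyer's signature cocycle and commutator lengths as the bookkeeping devices. Recall that a $\Sigma_g$-bundle over $\Sigma_h$ is the same data as a relation $\prod_{i=1}^h[a_i,b_i]=1$ in $\mathrm{Mod}(\Sigma_g)$, that any such bundle with $g\ge 2$ is symplectic by Thurston (the fiber class being nonzero in $H_2$), and that its signature is a sum of Meyer cocycle values $\tau_g$ on the symplectic images of the $a_i,b_i$. I would first record two stabilization moves for a symplectic $\Sigma_g$-bundle $X\to\Sigma_h$ with $\sigma(X)>0$: (i) the fiber sum of $X$ with the trivial bundle $\Sigma_g\times\Sigma_k$ along a regular fiber is a symplectic $\Sigma_g$-bundle over $\Sigma_{h+k}$, again of signature $\sigma(X)$, by Gompf together with Novikov additivity (the fiber has square zero and $\sigma(\Sigma_g\times\Sigma_k)=0$); (ii) if $X$ carries a square-zero section, the fiber sum of $X$ with $\Sigma_1\times\Sigma_h$ along that section and $\{pt\}\times\Sigma_h$ is a symplectic $\Sigma_{g+1}$-bundle over $\Sigma_h$, again of signature $\sigma(X)$. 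Iterating (i) increases $h$ and (ii) increases $g$, so Theorem~\ref{main} reduces to producing, each with a square-zero section, four seed bundles realizing $(g,h)=(15,2)$, $(9,3)$, $(4,4)$ and $(3,5)$ (or any pair with the same $h$ and smaller $g$); a short bookkeeping check shows that their stabilizations cover exactly the four ranges in the statement. The square-zero section is a minor point: the seeds below come from relations in $\mathrm{Mod}(\Sigma_{g,1})$ and hence carry a section whose self-intersection can be normalized, or a residual nonzero square can be absorbed in move (ii) by using instead a suitable torus bundle over $\Sigma_h$, which still has signature zero.

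\emph{The easy seeds.}
For $h=4,5$ with fiber genus $3$ or $4$, I would start from classical positive-signature bundles of Atiyah--Kodaira and Bryan--Donagi--Stipsicz type: branched covers of $\Sigma_2\times\Sigma_h$, pulled back over a cover of the base so that the branch class becomes divisible, whose signature is computed by the branched-cover formula and is positive for $h$ beyond a small threshold; where the fiber genus is still too high, I would lower it by the mapping-class-group surgeries in the style of Endo, tracking the signature through $\tau_g$ and arranging a section throughout.

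\emph{The hard seeds $(15,2)$ and $(9,3)$.}
These require the new commutator-length computations. The scheme is to construct a positive-signature ``local model'' $E\to S$ over a once- or twice-holed subsurface $S\subset\Sigma_h$ with prescribed boundary monodromy $\varphi\in\mathrm{Mod}(\Sigma_g)$, and then cap it off by a complementary bundle over $\Sigma_h\setminus S$ with boundary monodromy $\varphi^{-1}$; such a cap exists precisely when $\varphi$ is a product of the appropriate number of commutators, and the two signatures add by Novikov. For $h=2$ (resp.\ $h=3$) this forces one to write a carefully chosen mapping class in $\mathrm{Mod}(\Sigma_{15})$ (resp.\ $\mathrm{Mod}(\Sigma_{9})$), with sufficiently positive signature defect, as a product of only $2$ (resp.\ $3$) commutators. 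I would prove the upper bound $\mathrm{cl}(\varphi)\le h$ by exhibiting the $a_i,b_i$ explicitly --- the constructive heart of the argument --- and the matching lower bound, so that the base genus really is this small and the commutator lengths are \emph{determined} as the abstract claims, from the positivity of $\sigma$ together with the bound on $|\sigma|$ coming from the homogeneous quasimorphism on $\mathrm{Sp}(2g,\Z)$ attached to $\tau_g$. One then assembles the pieces, checks symplecticity (Gompf for the sums, Thurston for the closed bundle), and reads off $\sigma>0$ from the $\tau_g$-contributions.

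\emph{Main obstacle.}
The bottleneck is the last step: expressing a mapping class as a product of merely two or three commutators while keeping its signature defect positive is genuinely delicate, and the values $g=15$ and $g=9$ appear precisely because a large enough fiber genus is what provides the room --- enough independent Meyer cocycle terms of the right sign --- to make such an economical factorization possible; the small-genus pairs left open are exactly those for which no such factorization is currently known. The remaining points --- the quasimorphism estimates behind the lower bounds, the orientation and Novikov bookkeeping when gluing the local models, and the check that the stabilized bundles retain square-zero sections --- are routine once the explicit commutator factorizations are in hand.
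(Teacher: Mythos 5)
Your reduction to a handful of seed bundles via fiber-sum stabilizations is the right framing and agrees with the paper's use of embeddings $\Sigma_g^1\hookrightarrow\Sigma_{g'}^1$ and concatenation of trivial commutators, and the capping-off/Novikov picture for the base-genus-$2$ and $3$ cases is the right abstract shape. But the proposal stops short of the actual content: it identifies where the work lies without doing it, and the specific methods it offers for producing the seeds do not work.

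Concretely, the claim that the $(3,5)$ and $(4,4)$ seeds are ``easy'' and obtainable from Atiyah--Kodaira or Bryan--Donagi--Stipsicz branched covers of $\Sigma_2\times\Sigma_h$ is not correct. Those constructions have inherently large fiber genus (Atiyah's example has fiber genus in the hundreds; Bryan--Donagi over base genus $2$ starts at fiber genus $25$), and there is no known ``mapping-class-group surgery in the style of Endo'' that lowers the fiber genus of a positive-signature bundle while preserving the bundle structure --- you assert this as a black box, but it is not a technique that exists. The paper builds the $(3,5)$ seed from scratch: the $4$-holed torus relation embedded into $\Sigma_3^1$ gives a relator of signature $-4$, and a new lemma writing $t_{a_1}^2t_{a_2}^2t_{a_3}^2t_{a_4}^2$ as three commutators by relators of total signature $+8$ converts it into a $5$-commutator relator of signature $+4$. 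The $(4,4)$ seed uses a genus-$4$ semi-stable fibration over $T^2$ from Catanese--Corvaja--Zannier, again combined with a new two-commutator expression of a boundary multitwist power; it is not a classical branched cover. The $(5,4)$ seed uses an $8$-holed torus relation plus a new two-commutator expression in $\mathrm{Mod}(\Sigma_5^2)$.

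For the $(15,2)$ and $(9,3)$ seeds, you correctly observe that one must write a suitable mapping class in $\mathrm{Mod}(\Sigma_{15})$, resp.\ $\mathrm{Mod}(\Sigma_9)$, as a product of $2$, resp.\ $3$, commutators, and you call this ``the constructive heart of the argument'' --- and then do not provide it. The mechanism that makes it possible, and which fixes exactly the numbers $15=3\cdot 5$ and $9=3\cdot\lfloor(5+1)/2\rfloor$, is a Tsuboi--Burago--Ivanov--Polterovich style rotation argument: given a monodromy factorization $[a_1,b_1]\cdots[a_h,b_h]=1$ in $\mathrm{Mod}(\Sigma_g^1)$, one embeds $h$ rotated copies of $\Sigma_g^1$ into $\Sigma_{gh}^1$, extends the $a_i,b_i$ by the identity, and uses the disjointness of the rotated supports (so that Lemma~\ref{lem:commutators}(ii) applies) to collapse the product to two, or three, commutators while preserving signature. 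That is the genuinely new idea; your proposal gestures at ``enough independent Meyer cocycle terms of the right sign to make such an economical factorization possible'' but gives no route to achieving it. Finally, the quasimorphism lower bound you invoke is relevant to the paper's later commutator-length corollaries, not to Theorem~\ref{main}, which needs only the upper bounds (the explicit factorizations).
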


Since $\sigma=0$ when $g \leq 2$ or $h\leq 1$,  our theorem leaves out  $19$  possible $(g,h)$ pairs.  However,   Hamenst\"{a}dt showed  in \cite{Hamenstadt} that the Euler characteristic $\eu$ and the signature $\sigma$ of a surface bundle always satisfies the inequality $|\eu| \geq 3 \, |\sigma|$,  which allows us to rule out the existence of a surface bundle with positive signature for one (and just one) more possible pair: $(g,h)=(3,2)$.  While constructing surface bundles with positive signatures for all but $18$ possible pairs of $(g,h)$ is the best we could achieve at the time of writing,  it seems  plausible that variations of our techniques,  which we will discuss shortly,   may succeed in shrinking the gap even further.

All surface bundles we built in the theorem have signature $\sigma=4$.  Therefore,  for $\mathcal{M}_g$  the moduli space of genus--$g$ curves and $m_g$ denoting the minimal genus among the genera of all surfaces representing the infinite cyclic generator of $H_2(\mathcal{M}_g; \Z)/ \textrm{Tor}$,  with $g \geq 3$,  as observed in \cite{BryanDonagi}, we can  conclude from our results that  $m_g=2$ for any $g \geq 15$, and  we have the estimates  $m_g=2$ or $3$ when $9 \leq g \leq 14$,  $m_g=2, 3$ or $4$ when $4 \leq g \leq 8$, and $m_g=3, 4$ or $5$ when $g=3$.

We  describe all but one of our surface bundles in Theorem~\ref{main} via explicit monodromy factorizations in the mapping class group $\M(\Sigma_g)$.\,(The remaining example uses a semi-stable holomorphic fibration due to  Catanese--Corvaja--Zannier in \cite{CataneseEtal} as an ingredient.) The breakthrough in our understanding of relations that generate these small surface bundles with positive signatures is due to shorter commutator expressions we are able to obtain for both products of commutators themselves and multi-twists in the mapping class group. In particular, Theorem~\ref{thm:tsuboish}, the proof of which adapts an ingenious argument of Tsuboi in  \cite{Tsuboi} and Burago, Ivanov and Polterovich in \cite{BuragoEtal},  allows one to derive examples with base genus $h=2$ and $3$ from those over higher genera surfaces.  Leveraging these ingredients,  we  calculate  in Corollaries~\ref{cor:cl} and~\ref{cor:scl} the commutator and stable commutator lengths of a few new mapping classes, in particular providing new answers to the Kirby Problem 2.13(b)~\cite{Kirby}.

One of the motivations for our work is to better understand how the geography of surface bundles compare to that of symplectic $4$--manifolds and compact complex surfaces. While all surface bundles with positive signatures admit  symplectic forms a l\'{a} Thurston,  their total spaces do not necessarily admit complex structures. \linebreak In fact,  by the first author's work in \cite{BaykurNonholomorphic}, the surface bundles we construct in this article yield infinitely many such examples for all possible fiber and base genera except for less than two dozen pairs; see Remark~\ref{Nonholomorphic}.

A particularly interesting comparative geography problem is for the border case of Bogomolov--Miyaoka--Yau inequality \cite{Bogomolov, Miyaoka, Yau}. By Yau's celebrated solution of the Calabi conjecture \cite{Yau}, any compact complex surface of general type with $\eu = 3 \, \sigma$ is a complex ball quotient. These constitute a rather small but very interesting class of complex surfaces, which can not contain any surface bundles  \cite{Liu,  Kapovich}. In contrast, it is still not known whether there are symplectic $4$--manifolds of general type with $\eu = 3 \, \sigma$ that are not complex ball quotients, leading to the more specific question:

\begin{question}
Is there a symplectic surface bundle over a surface with $\eu = 3\,\sigma$?
\end{question}

\noindent This amounts to asking in particular whether there is a $\Sigma_g$--bundle over $\Sigma_h$ with positive signature for $(g,h)=(4,2)$ ---where our example with $(g,h)=(4,4)$ gets provokingly close! And more generally,  it is part of the bigger question on the existence of any further constraints on the geography of surface bundles with positive signatures,  while obviously the very examples in this article limit much wilder constraints to be expected.

\bigskip
\noindent 
\textit{Basic conventions:\,} 
All  manifolds and maps we consider in this article are smooth. We denote a compact orientable surface of genus $g$ with $b$ boundary components by $\Sigma_{g}^b$,  whereas we omit $b$ when there is no boundary. 
We denote by $\mathrm{Diff}^+(\Sigma_g^b)$ the group of orientation--preserving diffeomorphisms 
 $\Sigma_g^b\to \Sigma_g^b$ that restrict to the identity in a collar neighborhood of the boundary. The \textit{mapping class group} of $\Sigma_g^b$ is defined as $\M(\Sigma_{g}^b):=\pi_0(\mathrm{Diff}^+(\Sigma_g^b))$. Our products of mapping classes act on curves starting with the rightmost factor. 
  Whenever we study relations in the mapping class group of $\Sigma_g^b$, 
  we  consider the curves on $\Sigma_g^b$ and the elements in $\mathrm{Diff}^+(\Sigma_g^b)$ 
  only up to isotopy and we denote their isotopy classes by the same symbols. 
  We denote  by $t_c$ the right-handed, or the \textit{positive Dehn twist}, along a simple closed curve 
  $c$ on a surface $\Sigma_g^b$. For any $A$ and $B$ in $\M(\Sigma_g^b)$, we let $[A,B]:=A B A^{-1} B^{-1}$ denote their commutator, and $A^{B}:=BA B^{-1}$ denote the conjugate of $A$  by $B$. We denote by $\lfloor r \rfloor$ the largest integer less than or equal to the real number $r$.

\noindent 
\textit{Further conventions:\,} 
By a \textit{genus--$g$ surface bundle $(X,f)$ over a genus--$h$ surface} we mean a smooth locally trivial $\Sigma_g$--bundle $f\colon X \to \Sigma_h$, where $X$ is an oriented $4$--manifold. A \textit{monodromy factorization} for $(X,f)$ with $b$ disjoint sections $\{S_j\}$ of self-intersections $S_j \cdot S_j=-k_j$ is a relation of the form 
\[  [A_1, B_1] \cdots [A_h, B_h] =  t_{\delta_1}^{k_1} \cdots t_{\delta_b}^{k_b}  
   \ \ \ \ \text{ in }\M(\Sigma_g^b),
\]
where $A_i, B_i$ are general elements in $\M(\Sigma_g^b)$ and $\{\delta_j\}$ are boundary parallel curves along distinct boundary components of $\Sigma_g^b$. Finally, for any relator $W=1$ in $\M(\Sigma_g^b)$ we define the signature $\sigma(W)$ as the algebraic sum of the signatures of the relators that are used to derive it from the trivial word with respect to Dehn twist generators \cite{Endo, EndoNagami}.  We refer the reader to \cite{FarbMargalit, EKKOS, BaykurKorkmazMonden, BaykurMargalit, 
EndoNagami, Monden} for the general background on surface bundles, monodromy factorizations, mapping class group relations, and signatures.

\clearpage
\section{Shorter expressions for products of commutators} \label{sec:comm4comm}

There are many situations when a given product of commutators in a group can be re-expressed as a product of less number of commutators.  For example,  the famous Hall-Witt identity for arbitrary $a, b, c$ in a group $G$ can be  arranged to read
\[ [[a,b], c^b] \, [[b,c], a^c] = [b^a, [c,a]]. \] 
Here are a couple of other instances that might be less well--known:

\begin{lemma} \label{lem:commutators}
For $a, b, c,d$ any elements in a group $G$, the following hold:
\begin{enumerate}[\rm (i)]
\item \, $[a,b]\, [b,c]\,[c,d]\,[d,a]=[a^{-1} c,  b^{-1}d]^{ab}$, and
\item \, $\prod_{i=1}^k[a_i, b_i]= [\prod_{i=1}^k  a_i ,  \prod_{i=1}^k  b_i ]$,  \, if
 $[a_i, a_j]=[a_i, b_j]=[b_i,b_j]=1$ for all $i\neq j$. 
\end{enumerate}
\end{lemma}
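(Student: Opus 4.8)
The plan is to treat both identities as purely formal computations valid in the free group on the generators involved, so that they then hold in any group $G$.

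\emph{Part (i).} I would prove this by expanding both sides into words in $a,b,c,d$ and comparing reduced forms. Writing the left-hand side as the product of the defining words $aba^{-1}b^{-1}$, $bcb^{-1}c^{-1}$, $cdc^{-1}d^{-1}$, $dad^{-1}a^{-1}$, the consecutive pairs $b^{-1}b$, then $c^{-1}c$, then $d^{-1}d$ telescope, leaving $aba^{-1}cb^{-1}dc^{-1}ad^{-1}a^{-1}$. Separately, I would expand the right-hand side: $[a^{-1}c,b^{-1}d]=a^{-1}cb^{-1}dc^{-1}ad^{-1}b$, and conjugating by $ab$ (recall the convention $A^{B}=BAB^{-1}$) the outermost $bb^{-1}$ cancels, giving again $aba^{-1}cb^{-1}dc^{-1}ad^{-1}a^{-1}$. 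Since both sides reduce to the same word, the identity follows.

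\emph{Part (ii).} I would argue by induction on $k$, the case $k=1$ being trivial. For the inductive step put $A=a_1\cdots a_{k-1}$ and $B=b_1\cdots b_{k-1}$; the hypothesis forces $a_k$ and $b_k$ to commute with each $a_i$ and each $b_i$ for $i<k$, hence with both $A$ and $B$. By the inductive hypothesis $\prod_{i<k}[a_i,b_i]=[A,B]$, so it suffices to establish the two-factor identity $[A,B]\,[a_k,b_k]=[Aa_k,\,Bb_k]$. Expanding the right-hand side and moving $a_k$ past $B$ and $A^{-1}$ past $b_k^{-1}$ — both legitimate by the commuting relations — yields $AB\,[a_k,b_k]\,A^{-1}B^{-1}$; and since $a_k,b_k$ (hence $a_k^{-1},b_k^{-1}$) commute with $A$ and $B$, the commutator $[a_k,b_k]$ commutes with $A^{-1}B^{-1}$, so this equals $ABA^{-1}B^{-1}[a_k,b_k]=[A,B][a_k,b_k]$, completing the induction.

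There is no real conceptual obstacle here; the only thing to be careful about is bookkeeping. In (i) one must respect the conjugation convention, and in (ii) one must remember that $a_k$ and $b_k$ are \emph{not} assumed to commute with each other — only the cross-index relations are available — so the rearrangements have to be carried out in the order indicated, and one should not attempt to pass $A^{-1}$ through $a_k$. (Alternatively, (ii) reduces to the $k=2$ case and then iterates, using that $a_1a_2$ and $b_1b_2$ still commute with $a_3,b_3,\dots$; the induction above merely packages this.) I would also note that an identity like (i) is exactly the kind one finds by hunting for telescoping cancellations in a cyclic product of commutators whose consecutive factors share a letter.
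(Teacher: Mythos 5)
Your proposal is correct. Part (i) is essentially the paper's argument in slightly rearranged form: the paper cancels the internal $b^{-1}b$, $c^{-1}c$, $d^{-1}d$ pairs, inserts $bb^{-1}$, and regroups the resulting word into $(ab)(a^{-1}c)(b^{-1}d)(a^{-1}c)^{-1}(b^{-1}d)^{-1}(ab)^{-1}$, whereas you reduce both sides independently to the common free-group word $aba^{-1}cb^{-1}dc^{-1}ad^{-1}a^{-1}$ and compare; same computation, different bookkeeping. Part (ii) does take a genuinely different route: the paper carries out a single direct rearrangement of the full product, pulling all $a_i$'s to the front, then all $b_i$'s, then sorting the inverses, always invoking cross-index commutativity; you instead isolate the two-factor identity $[A,B][a_k,b_k]=[Aa_k,Bb_k]$ under the hypothesis that $A,B$ commute with $a_k,b_k$, and induct on $k$. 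The induction is tidier to verify step-by-step and makes the reliance on cross-index commutativity very explicit (in particular, it cleanly highlights that $[a_k,b_k]$ commutes with $A^{-1}B^{-1}$ even though $a_k$ and $b_k$ need not commute with each other); the paper's direct shuffle is shorter but asks the reader to track more cancellations at once. Both are elementary and rest on exactly the same hypotheses, so this is a stylistic rather than a substantive difference.
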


\begin{proof} Both identities can be checked in a straightforward fashion:
\begin{eqnarray*}
    [a,b]\, [b,c]\,[c,d]\,[d,a] &=& aba^{-1}\underline{b^{-1} \, b}cb^{-1}\underline{c^{-1} \, c}dc^{-1}\underline{d^{-1} \, d}ad^{-1}a^{-1} \\
	&=&  aba^{-1} \, cb^{-1} \, dc^{-1} \, ad^{-1}a^{-1}  	 \\
	&=&  aba^{-1} cb^{-1} dc^{-1} ad^{-1} \, \underline{b b^{-1}} \, a^{-1}  \\
	&=& (ab) \, a^{-1} c  \, b^{-1} d  \, (a^{-1} c)^{-1}\, (b^{-1} \, d)^{-1} \, (a b)^{-1} \\
	&=& [a^{-1} c , b^{-1}  d ]^{a b}, 
\end{eqnarray*}
where we have underlined the canceling pairs.  Likewise,
\begin{eqnarray*}
   \prod_{i=1}^k[a_i, b_i]
    &=& a_1b_1a_1^{-1}b_1^{-1} \, a_2b_2a_2^{-1}b_2^{-1} \cdots a_kb_ka_k^{-1}b_k^{-1} \\
     &=& a_1a_2\cdots a_k \, b_1a_1^{-1}b_1^{-1} \, b_2a_2^{-1}b_2^{-1} \cdots b_ka_k^{-1}b_k^{-1} \\
     &=& a_1a_2\cdots a_k \, b_1b_2 \cdots b_k \, a_1^{-1}b_1^{-1} \, a_2^{-1}b_2^{-1} \cdots a_k^{-1}b_k^{-1} \\
     &=& a_1a_2\cdots a_k \, b_1b_2 \cdots b_k \, a_1^{-1}a_2^{-1} \cdots a_k^{-1} \, b_1^{-1}b_2^{-1} \cdots b_k^{-1} \\
     &=& a_1a_2\cdots a_k \, b_1b_2 \cdots b_k \, a_k^{-1}\cdots a_2^{-1} a_1^{-1} \, b_k^{-1}\cdots b_2^{-1} b_1^{-1} \\
     &=& [\prod_{i=1}^k  a_i ,  \prod_{i=1}^k  b_i ],
\end{eqnarray*}
where in each one of the intermediate steps we have repeatedly used only  the given commutativity relations. 
\end{proof}

Recall that a conjugate of a commutator is again a commutator,  so  that
all the commutator identities we have listed so far, in fact, describe a product of commutators as a single commutator.  The first identity in the lemma appears in the literature as early as in \cite{Neumann} and contains two special cases which appear more frequently: 
\[   [a,b]\, [b,c]\,[c,a] =[a^{-1} c,  b^{-1}a]^{ab} \]
which one derives by taking $d=a$ in Lemma~\ref{lem:commutators}(i), whereas  taking $d=1$ we get
\begin{equation} \label{eqn:usefulcomm}
[a,b]\, [b,c]= [a^{-1} c,  b^{-1}]^{ab}.
\end{equation}

The second identity in the lemma is perhaps more contemporary but was clearly already known to experts  \cite{Tsuboi, BuragoEtal} (more on this below).

For monodromy factorizations of surface bundles,  commutator identities as above allow one to derive new surface bundles over surfaces of smaller genera. Our next theorem,  the proof of which is leveraging a beautiful argument of  Tsuboi  in \cite{Tsuboi} and Burago, Ivanov and Polterovich in \cite{BuragoEtal},  shows that one can moreover lower the base genus dramatically at the expense of increasing  the fiber genus:

\begin{theorem} \label{thm:tsuboish}
 Let $(X,f)$ be a genus--$g$ surface bundle over a genus--$h$ surface with  a section of 
 self-intersection zero. Then there also exist surface bundles
 \begin{enumerate}[\rm (i)]
	\item\, $(X',f')$ of fiber genus $g'=gh$ and base genus $2$,  for $h \geq 2$,  and
	\item\,  $(X'',f'')$ of fiber genus $g''=g \lfloor \frac{h+1}{2} \rfloor$ and base genus $3$,  for $h \geq 3$,
 \end{enumerate}
 also with sections of self-intersection zero and signatures 
  $\sigma(X') = \sigma(X'') = \sigma(X)$. 
  Further,  given an explicit monodromy factorization for $(X,f)$ with a self-intersection zero section $S$,  
  we can  explicitly describe the monodromy factorization of $(X',f')$ and $(X'',f'')$ with self-intersection 
  zero sections $S'$ and $S''$,  respectively. 
\end{theorem}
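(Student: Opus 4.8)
The plan is to repackage the hypothesis as a single relation in the mapping class group of a once–punctured surface and then run a finite version of the displacement trick of \cite{Tsuboi,BuragoEtal}. Since $(X,f)$ carries a section $S$ with $S\cdot S=0$, it has a monodromy factorization $[A_1,B_1]\cdots[A_h,B_h]=1$ in $\M(\Sigma_g^1)$, the boundary $\delta$ being parallel to $S$. Build $\Sigma_{gh}^1$ explicitly: take a disk $D$, remove $h$ open sub-disks placed at the vertices of a regular $h$--gon, and glue a copy of $\Sigma_g^1$ along each of the $h$ inner boundary circles. This gives $h$ pairwise disjoint subsurfaces, hence commuting injections $\iota_1,\dots,\iota_h\colon\M(\Sigma_g^1)\hookrightarrow\M(\Sigma_{gh}^1)$, and — rotating $D$ by $2\pi/h$ and correcting that rotation inside a collar of $\partial D$ so that $\partial D$ is fixed pointwise — a mapping class $\tau\in\M(\Sigma_{gh}^1)$ with $\tau\,\iota_j(x)\,\tau^{-1}=\iota_{j+1}(x)$ for all $x$ and all $j$ (indices mod $h$); the correction is supported away from the glued copies, so this conjugation relation is exact. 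Put $c_i=[A_i,B_i]$, $d_j=c_1\cdots c_j$, and $\mathcal A=\prod_j\iota_j(A_j)$, $\mathcal B=\prod_j\iota_j(B_j)$, $P=\prod_j\iota_j(d_j)$, all products taken in any order since distinct copies commute.

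The core of the argument is the purely formal identity
\[
 [\mathcal A,\mathcal B]\,[P^{-1},\tau]\;=\;\iota_1\bigl([A_1,B_1]\cdots[A_h,B_h]\bigr)\qquad\text{in }\M(\Sigma_{gh}^1),
\]
valid \emph{before} using the hypothesis. Indeed $[\mathcal A,\mathcal B]=\prod_j\iota_j(c_j)$ by Lemma~\ref{lem:commutators}(ii) applied to the commuting families $\{\iota_j(A_j)\}$ and $\{\iota_j(B_j)\}$; expanding $[P^{-1},\tau]=P^{-1}\tau P\tau^{-1}$ and collecting factors copy by copy — using only $d_j=d_{j-1}c_j$ and commutativity of distinct copies — gives $[P^{-1},\tau]=\iota_1(c_1^{-1}d_h)\prod_{j=2}^h\iota_j(c_j^{-1})$; multiplying the two and cancelling within each copy leaves $\iota_1(d_h)$. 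Now invoke $d_h=c_1\cdots c_h=1$: the right–hand side is trivial, so $[\mathcal A,\mathcal B]\,[P^{-1},\tau]=1$. This is a product of two commutators equal to the identity, hence a monodromy factorization of a genus--$gh$ surface bundle $(X',f')$ over $\Sigma_2$ with a section $S'$ dual to $\partial\Sigma_{gh}^1$ of self-intersection zero, and it is explicit in terms of the given $A_i,B_i$. For $\sigma(X')=\sigma(X)$, observe that the displayed identity rewrites the new relator as $\iota_1$ of the old one using only moves that contribute zero to Endo's signature \cite{Endo,EndoNagami} — trivial insertions and commutator relations among mapping classes with disjoint supports, whose symplectic actions split over complementary summands — while subsurface inclusion preserves Endo's signature; hence $\sigma(X')=\sigma\bigl([A_1,B_1]\cdots[A_h,B_h]=1\bigr)=\sigma(X)$.

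Part (ii) differs only in the grouping. Set $m=\lfloor\tfrac{h+1}{2}\rfloor$ and collect the $h$ commutators into $m$ blocks $e_1,\dots,e_m$, each a product of at most two consecutive $c_i$'s, so that $e_1\cdots e_m=[A_1,B_1]\cdots[A_h,B_h]$. Repeating the construction with $m$ disjoint copies of $\Sigma_g^1$ inside $\Sigma_{gm}^1$, the analogous shift $\tau$, and $\hat P=\prod_j\iota_j(e_1\cdots e_j)$, the same copy-by-copy computation yields
\[
 [\mathcal A_1,\mathcal B_1]\,[\mathcal A_2,\mathcal B_2]\,[\hat P^{-1},\tau]\;=\;\iota_1\bigl([A_1,B_1]\cdots[A_h,B_h]\bigr)\;=\;1,
\]
where $[\mathcal A_1,\mathcal B_1]\,[\mathcal A_2,\mathcal B_2]$ is obtained from $\prod_j\iota_j(e_j)$ by applying Lemma~\ref{lem:commutators}(ii) separately to the first and the second sub-commutators of the blocks (with trivial entries where a block contains only one commutator), using again that distinct copies commute. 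This is a product of three commutators equal to the identity in $\M(\Sigma_{gm}^1)$, i.e.\ a monodromy factorization of a genus--$gm=g\lfloor\tfrac{h+1}{2}\rfloor$ surface bundle $(X'',f'')$ over $\Sigma_3$ with a self-intersection-zero section $S''$, explicit in the given data, and $\sigma(X'')=\sigma(X)$ by the same signature–zero–moves argument.

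The step I expect to be the main obstacle is the last one in each part: one must verify that the commutator relation between two mapping classes supported in disjoint subsurfaces contributes $0$ to Endo's signature and that subsurface inclusion preserves it, which is exactly what forces $\sigma(X')=\sigma(X)$ rather than $0$; granting that Meyer's cocycle is additive over a symplectic splitting and vanishes on such ``reducible'' pairs, this is routine but must be spelled out. A second, milder point is to exhibit $\tau$ as an honest element of $\M(\Sigma_{gh}^1)$ fixing the boundary pointwise and satisfying $\tau\iota_j\tau^{-1}=\iota_{j+1}$ on the nose; the collar correction described above takes care of this, since it is supported away from the glued copies and the $2\pi$--rotation it completes is absorbed into a power of $t_\delta$.
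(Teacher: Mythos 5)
Your proof is correct, and while part (i) is essentially the paper's argument in slightly cleaner clothing, part (ii) takes a genuinely different and shorter route. For (i), the paper runs the same Tsuboi--Burago--Ivanov--Polterovich displacement with a rotation $R$ of $\Sigma_{gh}^1$ and a staircase element built from $C_i^{R^{h-i}}$; your $\iota_j$, $\tau$, $P=\prod_j\iota_j(d_j)$ are just a tidier notation for the same computation, and your identity $[\mathcal A,\mathcal B][P^{-1},\tau]=\iota_1(d_h)$ matches the paper's $\tilde\phi=[A,B][R,P^{-1}]$ up to relabeling and the order of indices. For (ii), however, the paper splits the input relation into two subproducts $\phi_1,\phi_2$, derives two separate two-commutator identities (one via $P$, one via a ``variant'' $Q$), and then stitches them together using Lemma~\ref{lem:commutators}(i) to produce three commutators; you instead group the $h$ input commutators into $m=\lfloor(h+1)/2\rfloor$ consecutive pairs $e_j$, run a \emph{single} displacement over $m$ disjoint copies, and split $\prod_j\iota_j(e_j)$ into two commutators by applying Lemma~\ref{lem:commutators}(ii) twice (once to the first sub-commutator of each block, once to the second), getting the third commutator for free from $[\hat P^{-1},\tau]$. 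This is more economical: it skips the paper's ``variation'' and ``three-commutator identity'' subsections entirely and makes (i) and (ii) instances of one argument. The paper's formulation is somewhat more general (it composes two arbitrary relations $\phi_1,\phi_2$, which is a reusable device), but for the theorem as stated your route is the cleaner one. The signature argument and the discussion of $\tau$ being a bona fide mapping class (with $\tau^h$ a boundary twist that commutes with all $\iota_j(x)$) match the paper's; your self-identified ``obstacle'' about Meyer's cocycle vanishing on reducible pairs and being preserved under subsurface inclusion is exactly the reasoning the paper spells out via \cite{EndoNagami}.
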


\begin{proof}
Let $a_i, b_i$,  for $i=1, \ldots, h$,  be elements of $\textrm{Diff}^{+}(\Sigma_g^1)$ which restrict to the identity in some collar neighborhood of $\partial \Sigma_g^1$.  Let $\phi=\prod_{i=1}^h[a_i, b_i]$. 
That is to say, we have the following relation: 
\begin{equation} \label{eqn:Tinput}
 \phi=[a_1, b_1] \cdots [a_h, b_h] \ \text{ in } \M(\Sigma_g^1) \,  ,
\end{equation}
where we simply denote the corresponding mapping classes by the same letters.

\medskip
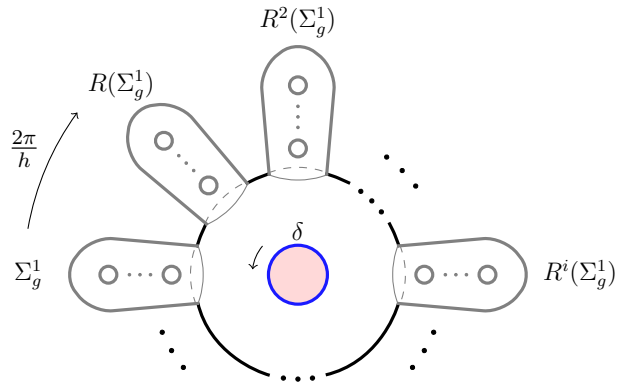
\begin{figure}[ht]
\begin{tikzpicture}[scale=0.8]
\begin{scope} [xshift=0cm, yshift=0cm, scale=0.7]
  \draw[very thick, rounded corners=1pt] (2.41,0.66) -- (2.28,1)--(2.16,1.24);
  \draw[very thick, rounded corners=1pt, rotate=45] (2.41,0.66) -- (2.28,1)--(2.16,1.24);
  \draw[very thick, rounded corners=1pt, rotate=90] (2.41,0.66) -- (2.28,1)--(2.16,1.24);
  \draw[very thick, rounded corners=1pt, rotate=135] (2.41,0.66) -- (2.28,1)--(2.16,1.24);
  \draw[very thick, rounded corners=5pt, rotate=-90] (2.38,0.66)..controls (2.1,1.84) and (1,2.3)..(0.67, 2.39);
  \draw[very thick, rounded corners=5pt, rotate=-180] (2.38,0.66)..controls (2.1,1.84) and (1,2.3)..(0.67, 2.39);
   \filldraw[color=blue!90, fill=red!15, very thick] (0,0) circle (0.7cm);
    \filldraw[rotate=0] (0,-2.48) circle (0.05cm);
    \filldraw[rotate=8] (0,-2.48) circle (0.05cm);
    \filldraw[rotate=-8] (0,-2.48) circle (0.05cm);
   \filldraw[rotate=135] (0,-2.48) circle (0.05cm);
   \filldraw[rotate=143] (0,-2.48) circle (0.05cm);
   \filldraw[rotate=127] (0,-2.48) circle (0.05cm);
  \draw[very thick, gray, rounded corners=8pt, yshift=0, xshift=0cm, rotate=0] (2.35,0.68) -- (4.9,0.9)--(5.6,0)--(4.9,-0.9) --(2.35,-0.68);
  \draw[gray, very thick] (3,0) circle (0.2cm);
  \draw[gray, very thick] (4.5,0) circle (0.2cm);
  \filldraw[gray] (3.5,0) circle (0.03cm);
  \filldraw[gray] (3.75,0) circle (0.03cm);  
  \filldraw[gray] (4,0) circle (0.03cm);
  \draw[gray, rounded corners=4pt, rotate=0] (2.38,0.72) -- (2.25,0.3)--(2.25,-0.3) --(2.38,-0.72);
  \draw[gray, dashed, rounded corners=4pt, rotate=0] (2.42,0.71) -- (2.55,0.3)--(2.55,-0.3) --(2.42,-0.71);  
  \draw[very thick, gray, rounded corners=8pt, yshift=0, xshift=0cm, rotate=90] (2.35,0.68) -- (4.9,0.9)--(5.6,0)--(4.9,-0.9) --(2.35,-0.68);
  \draw[very thick, gray, rotate=90] (3,0) circle (0.2cm); 
  \draw[very thick, gray, rotate=90] (4.5,0) circle (0.2cm);
  \filldraw[gray, rotate=90] (3.5,0) circle (0.03cm);
  \filldraw[gray, rotate=90] (3.75,0) circle (0.03cm);  
  \filldraw[gray, rotate=90] (4,0) circle (0.03cm);
  \draw[gray, rounded corners=4pt, rotate=90] (2.38,0.72) -- (2.25,0.3)--(2.25,-0.3) --(2.38,-0.72);
  \draw[gray, dashed, rounded corners=4pt, rotate=90] (2.42,0.71) -- (2.55,0.3)--(2.55,-0.3) --(2.42,-0.71);   
 \draw[very thick,  gray, rounded corners=8pt, yshift=0, xshift=0cm, rotate=135] (2.35,0.68) -- (4.9,0.9)--(5.6,0)--(4.9,-0.9) --(2.35,-0.68);
  \draw[very thick, gray, rotate=135] (3,0) circle (0.2cm);
  \draw[very thick, gray,  rotate=135] (4.5,0) circle (0.2cm);
  \filldraw[gray, rotate=135] (3.5,0) circle (0.03cm);
  \filldraw[gray, rotate=135] (3.75,0) circle (0.03cm);  
  \filldraw[gray, rotate=135] (4,0) circle (0.03cm);
    \draw[gray, rounded corners=4pt, rotate=135] (2.38,0.72) -- (2.25,0.3)--(2.25,-0.3) --(2.38,-0.72);
  \draw[gray, dashed, rounded corners=4pt, rotate=135] (2.42,0.71) -- (2.55,0.3)--(2.55,-0.3) --(2.42,-0.71);
  \draw[very thick, gray, rounded corners=8pt, yshift=0, xshift=0cm, rotate=180] (2.35,0.68) -- (4.9,0.9)--(5.6,0)--(4.9,-0.9) --(2.35,-0.68);
  \draw[very thick, gray, rotate=180] (3,0) circle (0.2cm);
  \draw[very thick, gray, rotate=180] (4.5,0) circle (0.2cm);
  \filldraw[gray, rotate=180] (3.5,0) circle (0.03cm);
  \filldraw[gray, rotate=180] (3.75,0) circle (0.03cm);  
  \filldraw[gray,rotate=180] (4,0) circle (0.03cm);
    \draw[gray, rounded corners=4pt, rotate=180] (2.38,0.72) -- (2.25,0.3)--(2.25,-0.3) --(2.38,-0.72);
  \draw[gray, dashed, rounded corners=4pt, rotate=180] (2.42,0.71) -- (2.55,0.3)--(2.55,-0.3) --(2.42,-0.71);
  \filldraw[rotate=127] (0,-3.5) circle (0.05cm);
  \filldraw[rotate=135] (0,-3.5) circle (0.05cm);
  \filldraw[rotate=143] (0,-3.5) circle (0.05cm);
  \filldraw[rotate=67] (0,-3.5) circle (0.05cm);
  \filldraw[rotate=59] (0,-3.5) circle (0.05cm);
  \filldraw[rotate=51] (0,-3.5) circle (0.05cm);
  \filldraw[rotate=-67] (0,-3.5) circle (0.05cm);
  \filldraw[rotate=-59] (0,-3.5) circle (0.05cm);
  \filldraw[rotate=-51] (0,-3.5) circle (0.05cm);
\draw[->, rounded corners=5pt, rotate=67] (-1.5, 6.327)..controls (-0.5, 6.557) and (0.5, 6.557).. (1.5,6.327);
\node[rotate=0] at (-6.5,3) {$\frac{2\pi}{h}$};
\draw[->, rounded corners=2pt, rotate=67] (0.3, 1.04)..controls (0.1, 1.1) and (-0.1, 1.1).. (-0.3,1.04);
 \node[scale=0.8] at (-6.4,0) {$\Sigma_g^1$};
 \node[scale=0.8] at (-4.2,4.5) {$R(\Sigma_g^1)$};
 \node[scale=0.8] at (0,6) {$R^2(\Sigma_g^1)$};
 \node[scale=0.8] at (6.7,0) {$R^i(\Sigma_g^1)$};
  \node[scale=0.8] at (0,1.04) {$\delta$};
\end{scope}
\end{tikzpicture}
\caption{The rotation $R$.} \label{fig:rotation}
\end{figure}
\smallskip

Now let $R$ be the clockwise $\frac{2\pi}{h}$--rotation of $\Sigma_{g'}^1$, with $g'=gh$ as  illustrated in Figure~\ref{fig:rotation},  followed by a counter-clockwise $\frac{2\pi}{h}$--rotation of $\partial \Sigma_{g'}^1$ supported in a small collar neighborhood of its boundary.  We take an embedding $\Sigma_g^1  \hookrightarrow   \Sigma_{g'}^1$ with image  as shown in Figure~\ref{fig:rotation},  away from the support of the above boundary rotation.  With this identification of $\Sigma_g^1$ with a subsurface of $\Sigma_{g'}^1$, we can then define $\tilde{\phi},  \tilde{a}_i, \tilde{b}_i \in \textrm{Diff}^{+}(\Sigma_{g'}^1)$
by extending each $\phi,  a_i, b_i$ as the identity on $\Sigma_{g'}^1 \setminus \Sigma_g^1$. 
We thus have the relation
\begin{equation}\label{eqn:Tinitial}
\tilde{\phi} = [\tilde{a}_1, \tilde{b}_1] \cdots [\tilde{a}_h, \tilde{b}_h]  \ \text{ in } \M(\Sigma_{g'}^1) \,  . 
\end{equation}

\noindent 
\underline{\textit{A two-commutator identity:}} Reviewing the details of Tsuboi's algebraic argument in  \cite{Tsuboi} (cf. \cite{BuragoEtal}) will be essential for  the remaining part of the proof of our theorem.  Let $C_i:= [\tilde{a}_i, \tilde{b}_i]$, for all $i=1, \ldots, h$,  so $\tilde{\phi} = \prod_{i=1}^h C_i$.   Following \cite{Tsuboi}, we set
\[ P:=  \prod_{i=1}^{h} (C_1^{R^{h-i}} \cdots C_i^{R^{h-i}}) 
\, . \]

There are quite a few commutativity relations which are important to note here.  First of all,  $[\tilde{a}_i^{R^p},  \tilde{b}_j^{R^q}]=[\tilde{a}_i^{R^p},  \tilde{a}_j^{R^q}]=[\tilde{b}_i^{R^p},  \tilde{b}_j^{R^q}]=1$ for any $p \neq q$ since each pair of diffeomorphisms in these commutators have disjoint supports in $\Sigma_{g'}^1$. 
It follows that $[C_i^{R^p}, C_j^{R^q}]=1$ for any $i, j$ and $p \neq q$.  In turn, 
$[C_1^{R^{h-i}} \cdots C_i^{R^{h-i}} \, , \, C_1^{R^{h-j}} \cdots C_j^{R^{h-j}}]=1$ whenever $i \neq j$,  so one can spell out the parenthetical factors in the above product expression of $P$ in any ---and in particular,  in reversed--- order.  Last but not least, 
since $R^h$ is identity on the compact support of any $\tilde{a}_i^{R^p}$ and $\tilde{b}_i^{R^p}$ (even though it is isotopic to a boundary parallel Dehn twist on $\Sigma_{g'}^1$),  we have $[R^h,  \tilde{a}_i^{R^p}]= [R^h,  \tilde{b}_i^{R^p}]=1$,  and thus  $[R^h,  C_i^{R^p}]=1$ for any $i$ and $p$, and in turn, $ (C_i^{R^p})^{R^h}=C_i^{R^p}$.

We have the product expressions
\[ P^{-1}= \prod_{i=1}^{h}(C_1^{R^{h-i}} \cdots C_i^{R^{h-i}})^{-1} 
= \tilde{\phi}^{-1} \ \prod_{i=1}^{h-1}(C_1^{R^{h-i}}  \cdots C_i^{R^{h-i}})^{-1}  \]
and 
\[
P^R= \prod_{i=1}^{h}(C_1^{R^{h-i+1}} \cdots C_{i}^{R^{h-i+1}})
= \prod_{i=0}^{h-1}(C_1^{R^{h-i}} \cdots C_{i+1}^{R^{h-i}}) 
= C_1  \prod_{i=1}^{h-1}(C_1^{R^{h-i}} \cdots C_{i+1}^{R^{h-i}}) . 
 \]
Therefore,  
\begin{eqnarray*}
  [P^{-1}, R]  
     &=&  P^{-1} P^R                  \\
     &=& \tilde{\phi}^{-1} \  \prod_{i=1}^{h-1}(C_1^{R^{h-i}}  \cdots C_i^{R^{h-i}})^{-1} \,  \  \cdot C_1  \prod_{i=1}^{h-1} (C_1^{R^{h-i}} \cdots C_{i+1}^{R^{h-i}})   \\
     &=& \tilde{\phi}^{-1} \ C_1 \  \prod_{i=1}^{h-1}
     ((C_1^{R^{h-i}}  \cdots C_i^{R^{h-i}})^{-1} \, (C_1^{R^{h-i}} \cdots C_{i+1}^{R^{h-i}}))   \\    
     &=& \tilde{\phi}^{-1} \ C_1 \  \prod_{i=1}^{h-1} C_{i+1}^{R^{h-i}}  \\
     &=& \tilde{\phi}^{-1} \  \prod_{i=1}^{h} C_i^{R^{h-i+1}} \\
&=&   \tilde{\phi}^{-1} \ [\, \prod_{i=1}^h \tilde{a}_i^{R^{h-i+1}}   \, ,  \, 
\prod_{i=1}^h \tilde{b}_i^{R^{h-i+1}}  ]
\end{eqnarray*}
where we repeatedly used the commutativity relations mentioned above and  
invoked Lemma~\ref{lem:commutators}(ii) at the final step.

Setting $A:=\prod_{i=1}^h \tilde{a}_i^{R^{h-i+1}}$ and $B:=\prod_{i=1}^h \tilde{b}_i^{R^{h-i+1}}$ we arrive at the two-commutator identity
\begin{equation}\label{eqn:Toutput1}
\tilde{\phi}=[A, B][R,  P^{-1}] \text{ in } \M(\Sigma_{g'}^1) \,  .
\end{equation}

\medskip
\noindent \underline{\textit{A variation:}}
Let $\tilde{\phi} = \prod_{i=1}^h C_i$ and $R$ be as above.  This time set
\[ 
 Q:=  \prod_{i=0}^{h-1} (C_1^{R^{i}} \cdots C_{h-i}^{R^{i}}) = \tilde{\phi} \ 
  \prod_{i=1}^{h-1} (C_1^{R^{i}} \cdots C_{h-i}^{R^{i}})
 \,  . \]
Note that by the commutativity relations mentioned above,  the parenthetical terms in this product can also be spelled out in any order.
So by using the equality $(C_1^{-1})^{R^h}=C_1^{-1}$, we have 
\[
(Q^{-1})^R= \prod_{i=0}^{h-1}(C_1^{R^{i+1}} \cdots C_{h-i}^{R^{i+1}})^{-1}
= \prod_{i=1}^{h}(C_1^{R^{i}} \cdots C_{h-i+1}^{R^{i}})^{-1} 
=  C_1^{-1} \ \prod_{i=1}^{h-1}(C_1^{R^{i}} \cdots C_{h-i+1}^{R^{i}})^{-1} \, .
 \]
 It follows that  for  $[Q, R]  = Q R Q^{-1} R^{-1} = Q (Q^{-1})^R $,  applying the same arguments as earlier, we have   
\begin{eqnarray*}
  [Q, R]   
     &=&  \tilde{\phi} \  \prod_{i=1}^{h-1}(C_1^{R^{i}}  \cdots C_{h-i}^{R^{i}}) \,  \ \cdot  C_1^{-1}  \prod_{i=1}^{h-1} (C_1^{R^{i}} \cdots C_{h-i+1}^{R^{i}})^{-1}     
      \\
     &=&  \tilde{\phi} \   C_1^{-1} \ \prod_{i=1}^{h-1}(C_1^{R^{i}}  \cdots C_{h-i}^{R^{i}}) \,  (C_1^{R^{i}} \cdots C_{h-i+1}^{R^{i}})^{-1}  \\
     &=&  \tilde{\phi} \  C_1^{-1} \ \prod_{i=1}^{h-1}(C_1^{R^{i}}  \cdots C_{h-i}^{R^{i}}) \,  
     (C_{h-i+1}^{R^{i}})^{-1}
     (C_1^{R^{i}} \cdots C_{h-i}^{R^{i}})^{-1}   \\     
     &=&  \tilde{\phi} \  C_1^{-1} \  \prod_{i=1}^{h-1} (C_{h-i+1}^{C_1^{R^{i}} \cdots \, C_{h-i}^{R^{i}} R^{i}})^{-1}  \\  
          &=&  \tilde{\phi} \   \prod_{i=1}^{h} (C_{h-i+1}^{C_1^{R^{i}} \cdots \, C_{h-i}^{R^{i}} R^{i}})^{-1}  \\  
          &=&  \tilde{\phi} \   \prod_{i=1}^{h} (C_{i}^{C_1^{R^{h-i+1}} \cdots \, C_{i-1}^{R^{h-i+1}} R^{h-i+1}})^{-1}   \\  
&=&   \tilde{\phi} \  [\, \prod_{i=1}^h (\tilde{a}_i^{C_1^{R^{h-i+1}} \cdots \, C_{i-1}^{R^{h-i+1}} R^{h-i+1}})^{-1}  \, ,  \, 
\prod_{i=1}^h (\tilde{b}_i^{C_1^{R^{h-i+1}} \cdots \, C_{i-1}^{R^{h-i+1}} R^{h-i+1}})^{-1}  ] \, .
\end{eqnarray*}
(Here we take $C_1^{R^{h-i+1}} \cdots \, C_{i-1}^{R^{h-i+1}}=1$ if $i=1$, and so on.)
Setting 
\[ A':=\prod_{i=1}^h (\tilde{a}_i^{C_1^{R^{h-i+1}} \cdots \, C_{i-1}^{R^{h-i+1}} R^{h-i+1}})^{-1}  \text{ and }
B':=\prod_{i=1}^h (\tilde{b}_i^{C_1^{R^{h-i+1}} \cdots \, C_{i-1}^{R^{h-i+1}} R^{h-i+1}})^{-1} \]
gives us the equality
\begin{equation}\label{eqn:Toutput2}
\tilde{\phi}=[Q,R][B', A'] \text{ in } \M(\Sigma_{g'}^1) \,  .
\end{equation}

\medskip
\noindent \underline{\textit{A three-commutator identity:}} Assume that for $j=1,2$ we have 
\[ \phi_j:= \prod_{i=1}^{h_j} [a_i(j), b_i(j)] \ \text{ in } \M(\Sigma_g^1)  \]
and let $h$ be the maximum of $h_1$ and $h_2$. Possibly after adding trivial commutators,  we can express both as  $\phi_j:= \prod_{i=1}^{h} [a_i(j), b_i(j)]$.  Running Tsuboi's trick for $j=1$ and its above variation for $j=2$, respectively, we obtain  two identities
\begin{eqnarray*}
\tilde{\phi}_1 &=& [A, B] [R, P^{-1}] =[R, P^{-1}] [A, B]^{[P^{-1},R]}  \\ 
\tilde{\phi}_2 &=& [Q,R][B', A']= [B', A']^{[Q,R]}[Q,R]  
\end{eqnarray*}
in $\M(\Sigma_{g'}^1)$.
Here, the triples $A, B, P$ and $A', B', Q$ are  determined by diffeomorphisms coming from the entries of the commutators in $\phi_1$ and $\phi_2$, respectively,  but $R$ is the same diffeomorphism of $\Sigma_{g'}^1$.  By 
the special case of Lemma~\ref{lem:commutators}(i),  we  have 
\begin{eqnarray*}
\tilde{\phi}_2 \tilde{\phi}_1 &=& [B', A']^{[Q,R]}[Q,R][R, P^{-1}] [A, B]^{[P^{-1},R]} \\
&=& [B',A']^{[Q,R]}
[Q^{-1}P^{-1}, R^{-1}]^{QR} 
[A, B]^{[P^{-1},R]}\, . 
\end{eqnarray*}

Relabeling  the conjugated commutator entries,  we get a new three commutator expression
\begin{equation} \label{eqn:Toutput3}
\tilde{\phi}_2 \tilde{\phi}_1 = [A_1, B_1][A_2,B_2][A_3,B_3]  \text{ in } \M(\Sigma_{g'}^1) \, .
\end{equation}

We note that here $g'=gh$ for $h=\textrm{max}\{h_1, h_2\}$ and not $h_1+h_2$.

\medskip
\noindent \underline{\textit{Constructions of $(X',f')$ and $(X'',f'')$:}}
There is a monodromy factorization for $(X,f)$ with a section $S$ of self-intersection zero of the following form:
\begin{equation} \label{eqn:monodX}
 [a_1, b_1] \cdots [a_h, b_h] =  1  \text{ in } \M(\Sigma_g^1) \,  . 
\end{equation}
Taking $\phi=1$ in~\eqref{eqn:Tinput},  and in turn getting $\tilde{\phi}=1$ in \eqref{eqn:Tinitial},  the equality~\eqref{eqn:Toutput1} becomes:
\begin{equation} \label{eqn:monodX'}
[A, B][R, P^{-1}]=1 \text{ in } \M(\Sigma_{g'}^1) \,  .
\end{equation} 
Prescribed by this relation is  the surface bundle $(X',f')$ with fiber genus $g'=gh$ and base genus $2$,  along with a section $S'$ of self-intersection zero. 

On the other hand, if we take $\phi_2=[a_1, b_1] \cdots [a_{k}, b_{k}]$ and $\phi_1= [a_{k+1}, b_{k+1}] \cdots [a_h, b_h]$ for $k :=  \lfloor \frac{h+1}{2} \rfloor$ in our construction of the three-commutator identity,  the equality~\eqref{eqn:Toutput3} becomes
\begin{equation} \label{eqn:monodX''}
[A_1, B_1][A_2,B_2][A_3,B_3] =1 \text{ in } \M(\Sigma_{g''}^1) \,  ,
\end{equation} 
where $g''= g  k$.  Prescribed by this relation is  the surface bundle $(X'',f'')$ with fiber genus $g''= g \lfloor \frac{h+1}{2} \rfloor$ and base genus $3$,  along with a section $S''$ of self-intersection zero. 

The monodromy factorization of a surface bundle is derived from the trivial word in the mapping class group of a fiber using some sequence of basic relators between Dehn twists.  By \cite{EndoNagami},  the signature of this surface bundle can be expressed as an algebraic sum of the signature contributions of these basic relators,  and the result is independent of the sequence.  Moreover the signature contribution of a relator and its conjugate are the same, so it suffices to look at a few basic relators. 

Post factum, after embedding a positive factorization in $\Sigma_g^1$ into the mapping class group of another surface as we did via the embedding $\Sigma_g \hookrightarrow \Sigma_{g'}$,  we can in fact regard it to be obtained from the trivial word in $\M(\Sigma_{g'}^1)$ using the same basic relators in $\M(\Sigma_{g'}^1)$.  (See next section for more on this.) Therefore, the initial positive factorization in our monodromy constructions,  namely \eqref{eqn:monodX}  has the same signature as the positive factorization \eqref{eqn:Tinput} with $\phi=1$.  Pivotal to our construction is that after that point,  in our derivation of the positive factorizations~\eqref{eqn:monodX'} and~\eqref{eqn:monodX''} all the relations we have used can be easily seen to be only commutativity and conjugation relators,  along with insertion/removal of canceling pairs,  all of which have zero signature contribution. 
Hence,  we have  $\sigma(X)=\sigma(X')= \sigma(X'')$,  as promised.
\end{proof}


\medskip

\section{Shorter expressions for products of Dehn twists} \label{sec:comm4DTs}

We now turn to expressing products of positive Dehn twists as products of small numbers of commutators, 
while using only the relators (between Dehn twist generators) in the mapping class group with non-negative signature contributions.\footnote{This seemingly unnatural restriction is due to our aspirations to build surface bundles with positive signatures via monodromy factorizations we will obtain in the final section by combining the relators we get here with other known relators that have negative signatures.)} Despite this resriction, several of our commutator expressions will make it possible for us to calculate the precise (and positive) commutator and stable commutator lengths of a few new mapping classes.  These are discussed at the very end of this section.

More explicitly, our focus here is on expressing a product $P$ of positive Dehn twists as a product $C$ of a few commutators, so that the signature of the relator $P^{-1} C=1$ is positive. Following \cite{Endo, EndoNagami}, we will take the infinite presentation of $\M(\Sigma_g^b)$ with Dehn twists along all curves as the generators, and the relators between them.  We review some of these relators and their signatures first.

\subsection{Basic relators and signatures in the mapping class group} \

We have the obvious relator $t_a t_a^{-1}=1$ for any $a$. If two curves $a$ and $b$ are disjoint, then we have the \textit{commutativity} relator $t_at_bt_a^{-1}t_b^{-1}=1$. Similarly, for simple closed curves 
$a$ and $b$ intersecting transversely at one point, there is the \emph{braid} relator $t_at_bt_at_b^{-1}t_a^{-1}t_b^{-1}=1$. All these basic relators have signature $\sigma=0$. It then follows that for any $A, B \in \M(\Sigma_g^b)$, 
the \emph{conjugation} relator $A^{-1}B^{-1}A^B B=1$ has $\sigma=0$ as well. This has several implications. For one, if one induces a relator $W'=1$ in $\M(\Sigma_{g'}^{b'})$ from a given relator $W=1$ in $\M(\Sigma_g^b)$ through some embedding $\Sigma_g^b \hookrightarrow \Sigma_{g'}^{b'}$, then $\sigma(W')=\sigma(W)$, and $\sigma(W')$ is in fact independent of the embedding.

There are two basic relators with positive signatures that are of importance to us. First, for a null-homotopic curve $a$ in $\Sigma_g^b$, the well-known relator $t_{a}^{-1}=1$ has $\sigma=+1$. One implication of this is the following: Say we have a monodromy factorization for a surface bundle with sections expressed by a relator 
$W:=C  \, t_{\delta_1}^{-k_1}   t_{\delta_2}^{-k_2} \cdots t_{\delta_b}^{-k_b} =1 $ 
in $\M(\Sigma_g^b)$, where $C$ is some product of commutators. Suppose that a relator $W'=1$ in $\M(\Sigma_g^{b-1})$ is derived from $W$ by capping off the boundary component
$\delta_i$ with a disk. Then $\sigma(W')=\sigma(W)+k_i$ by the above reasoning.

The point guard in our game is the \emph{lantern relator} in $\M(\Sigma_0^4)$
\[ t_{\delta_1}^{-1}t_{\delta_2}^{-1}t_{\delta_3}^{-1}t_{\delta_4}^{-1} t_xt_yt_z =1  ,
\]
where the curves $x, y, z, \delta_i$ are as shown in Figure~\ref{fig:lanternve4-holedtorus}. This relator also has $\sigma=+1$, which now follows from the facts we laid out  above, once you embed $\Sigma_0^4 \hookrightarrow \Sigma_g^b$. 

\medskip
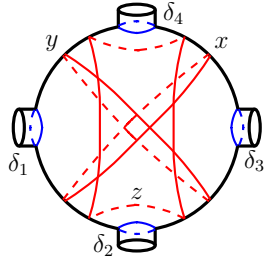
\begin{figure}[h]
\begin{tikzpicture}[scale=0.7]
\begin{scope} [xshift=0cm, yshift=0cm, scale=0.7]
    \draw[very thick ] (-2.75,0.467) arc (170:99.7:2.8);
  \draw[very thick ] (2.75,0.47) arc (10:80.3:2.8);
  \draw[very thick, rotate=90 ] (-2.75,0.467) arc (170:99.7:2.8);
 \draw[very thick, rotate=-90 ] (2.75,0.47) arc (10:80.3:2.8);
  \draw[very thick, rounded corners=10pt] (-0.5, 2.73)--(-0.5 ,3.2);
   \draw[very thick, rounded corners=10pt] (0.5, 2.73)--(0.5 ,3.2);
   \draw[very thick, rounded corners=10pt, rotate=90] (-0.5, 2.73)--(-0.5 ,3.2);
   \draw[very thick, rounded corners=10pt, rotate=90] (0.5, 2.73)--(0.5 ,3.2);
   \draw[very thick, rounded corners=10pt, rotate=180] (-0.5, 2.73)--(-0.5 ,3.2);
   \draw[very thick, rounded corners=10pt, rotate=180] (0.5, 2.73)--(0.5 ,3.2);
   \draw[very thick, rounded corners=10pt, rotate=270] (-0.5, 2.73)--(-0.5 ,3.2);
   \draw[very thick, rounded corners=10pt, rotate=270] (0.5, 2.73)--(0.5 ,3.2);
 \draw[very thick, rotate=0] (0,3.2) ellipse (0.5 and 0.15);
 \draw[very thick, rotate=90] (0,3.2) ellipse (0.5 and 0.15);
 \draw[very thick, rotate=180] (0,3.2) ellipse (0.5 and 0.15);
 \draw[very thick, rotate=270] (0,3.2) ellipse (0.5 and 0.15);
  \draw[thick, blue, rounded corners=3pt, yshift=-8, xshift=0cm, rotate=0] (-0.5,3) -- (-0.2,2.85)--(0.2,2.85) --(0.5,3);
  \draw[thick,  blue, dashed, rounded corners=3pt, yshift=-8, xshift=0cm, rotate=0] (-0.5,3.05) -- (-0.2,3.17)--(0.2,3.17) --(0.5,3.05);
  \draw[thick, blue, rounded corners=3pt, xshift=8, rotate=90] (-0.5,3) -- (-0.2,2.85)--(0.2,2.85) --(0.5,3);
  \draw[thick,  blue, dashed, rounded corners=3pt, xshift=8, rotate=90] (-0.5,3.05) -- (-0.2,3.17)--(0.2,3.17) --(0.5,3.05);
  \draw[thick, blue, rounded corners=3pt, yshift=8, xshift=0cm, rotate=180] (-0.5,3) -- (-0.2,2.85)--(0.2,2.85) --(0.5,3);
  \draw[thick,  blue, dashed, rounded corners=3pt, yshift=8, xshift=0cm, rotate=180] (-0.5,3.05) -- (-0.2,3.17)--(0.2,3.17) --(0.5,3.05);
 \draw[thick, blue, rounded corners=3pt, xshift=-8, rotate=270] (-0.5,3) -- (-0.2,2.85)--(0.2,2.85) --(0.5,3);
  \draw[thick,  blue, dashed, rounded corners=3pt, xshift=-8, rotate=270] (-0.5,3.05) -- (-0.2,3.17)--(0.2,3.17) --(0.5,3.05);   
   \draw[thick, red, rounded corners=12pt, rotate=45] (-2.8,-0.03) -- (-1.8,-0.25)--(1.8,-0.25) --(2.8,-0.03);
   \draw[thick, red, dashed, rounded corners=16pt, rotate=45] (-2.8,0.03) -- (-1,0.25)--(1,0.25) --(2.8,0.03);
   \draw[thick, red, rounded corners=12pt, rotate=135] (-2.8,-0.03) -- (-1.8,-0.25)--(1.8,-0.25) --(2.8,-0.03);
    \draw[thick, red, dashed, rounded corners=16pt, rotate=135] (-2.8,0.03) -- (-1,0.25)--(1,0.25) --(2.8,0.03);
    \draw[thick, red, dashed, rounded corners=8pt, rotate=0] (-1.3,2.47) -- (-0.5,2.1)--(0.5,2.1) --(1.3,2.47);
  \draw[thick, red, dashed, rounded corners=8pt, rotate=180] (-1.3,2.47) -- (-0.5,2.1)--(0.5,2.1) --(1.3,2.47);
     \draw[thick, red, rounded corners=12pt, rotate=0] (-1.3,2.47) -- (-1,1.5)--(-1,-1.5) --(-1.3,-2.47);
    \draw[thick, red, rounded corners=12pt, rotate=180] (-1.3,2.47) -- (-1,1.5)--(-1,-1.5) --(-1.3,-2.47);
 \node[scale=0.8] at (0,-1.8) {$z$};
 \node[scale=0.8] at (-2.3,2.3) {$y$};
 \node[scale=0.8] at (2.3,2.3) {$x$};
 \node[scale=0.8] at (-3.2,-1) {$\delta_1$};
 \node[scale=0.8] at (-0.9,-3.2) {$\delta_2$};
 \node[scale=0.8] at (3.2,-0.9) {$\delta_3$};
 \node[scale=0.8] at (1,3.1) {$\delta_4$};
\end{scope}
\end{tikzpicture}
\caption{The lantern curves on $\Sigma_0^4$.}  \label{fig:lanternve4-holedtorus}
\end{figure}
\smallskip

\smallskip
\subsection{Multitwists as products of a few commutators} \

In the next several lemmas, we are going to express various \emph{multitwists}, i.e. products of positive 
Dehn twists about disjoint curves, as small number of commutators using only the relators 
(with non-negative signatures) we have listed earlier. Our relations are supported in $\Sigma_g^b$, for $g=2$, $g=3$ and $g=5$ (and varying $b$), respectively, where, importantly, we always have $b>0$, so the same relations can be embedded into any $\M(\Sigma_{g'}^{b'})$ for $g' \geq g$.

We are going to make repeated use of the following simple but highly useful observation,  which has been well-exploited in several prior works; e.g.  \cite{ BaykurKorkmazMonden, EKKOS, Hamada, Harer,  Korkmaz, KorkmazOzbagci, Monden}.
Let $a_1, a_2,\ldots,  a_m$ and $b_1, b_2,\ldots, b_m$ be curves on $\Sigma_g^b$ 
such that there is a diffeomorphism $F \in \textrm{Diff}^+(\Sigma_g^b)$ with $F(a_i)=b_i$ 
for all $i=1, \ldots, m$.  We then have the one-commutator expression
\begin{equation} \label{1comm}
t_{a_1}^{k_1} \cdots t_{a_m}^{k_m} \, t_{b_m}^{-k_m} \cdots t_{b_1}^{-k_1} = [ t_{a_1}^{k_1} \cdots t_{a_m}^{k_n}, F ]
\ \ \ \ \text{ in } \M(\Sigma_g^b)\, .
\end{equation}
One can easily check that the corresponding relator has signature zero.

In the proof of the next lemma,  our arguments work for any power of the same (multi)twist that is of particular interest to us,   so we will derive the commutator expressions for all of them at once.  

\smallskip
\begin{lemma}  \label{lem:genus=2}
 For $\delta_1,\delta_2$ any two boundary components of $\Sigma_2^3$, and $n$ any positive integer,  
 there is a relator 
 \[  
 t_{\delta_1}^{-n} t_{\delta_2}^{-n} \prod_{i=1}^N C_i(n)=1  \ \ \ \text{ in }  \M(\Sigma_2^3)
 \]
 with  $N=\lfloor (n+3)/2 \rfloor$ commutators $\{C_i(n)\}$ and signature $2n$.
 \end{lemma}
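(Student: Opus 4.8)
The plan is to realize the multitwist $t_{\delta_1}^n t_{\delta_2}^n$ as a product of $N$ commutators using only two kinds of moves: the lantern relator, which is the sole relator we allow that carries positive signature (namely $\sigma=+1$ per copy), and the one-commutator trick \eqref{1comm}, which has $\sigma=0$. Since the signature of the resulting relator is the algebraic sum of the signatures of the basic relators used to derive it, the required value $2n$ forces us to use exactly $2n$ lantern relators and nothing else of nonzero signature; so the real content is to arrange $2n$ lanterns together with a handful of applications of \eqref{1comm} so that the number of commutators that appears is precisely $N=\lfloor(n+3)/2\rfloor$.

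First I would fix on $\Sigma_2^3$ an explicit configuration of simple closed curves --- $\delta_1,\delta_2$ together with a family of non-separating interior curves --- and a chain of embedded four-holed spheres $\Sigma_0^4\hookrightarrow\Sigma_2^3$, each having $\delta_1$ and $\delta_2$ among its four boundary circles and all other relevant curves in the interior of $\Sigma_2^3$ (this is why the lemma is stated on a genus-two surface). Running these lantern relators in sequence, each feeding its output twists into the next, I would convert $t_{\delta_1}^n t_{\delta_2}^n$, after $2n$ lantern moves and various signature-$0$ rearrangements (braid, commutativity, conjugation, insertion and deletion of cancelling pairs), into a word $W$ in Dehn twists about the interior curves of the configuration.

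Second, I would recognize $W$ as a product of $N$ commutators by peeling off one commutator at a time via \eqref{1comm}: at each stage one isolates a block $t_{a_1}^{k_1}\cdots t_{a_m}^{k_m}\, t_{b_m}^{-k_m}\cdots t_{b_1}^{-k_1}$ (inserting cancelling pairs where needed) and exhibits a self-diffeomorphism $F$ of $\Sigma_2^3$ with $F(a_i)=b_i$, collapsing the block to $[t_{a_1}^{k_1}\cdots t_{a_m}^{k_m},F]$. The maps $F$ are just the evident symmetries permuting the curves of the fixed configuration, so the issue is purely combinatorial: to choose the configuration, the lantern chain, and the order of peeling so that exactly $\lfloor(n+3)/2\rfloor$ commutators arise, with the ``$+3$'' and the floor accounting for the parity of $n$ and a fixed number of ``end'' blocks that do not pair up. I expect it will be cleanest to organize this so that passing from $n-2$ to $n$ adds precisely one new commutator built from four additional lanterns, so that $\sigma$ grows by $4$ and $N$ by $1$, with the small cases $n=1,2$ handled by hand.

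The step I expect to be the main obstacle --- requiring genuine care rather than routine computation --- is this explicit geometry: producing the concrete curves $a_i,b_i$ and diffeomorphisms $F$ so that the lantern chain closes up on $\Sigma_2^3$, so that every block has exactly the shape \eqref{1comm} requires, and so that the final count is $\lfloor(n+3)/2\rfloor$ and not, say, $n$ or $n/2$ with the wrong additive constant. Once that is in place I would re-derive the signature independently as the algebraic sum of $+1$ over the $2n$ lanterns and $0$ over all other basic relators, invoking the additivity and conjugation-invariance of the signature recalled earlier in this section; this also confirms a posteriori that no negative-signature relator (such as a chain relation) has crept into the derivation, as the standing restriction of this section demands.
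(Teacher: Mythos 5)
Your plan correctly identifies the two ingredients the paper uses --- lantern relators (each contributing $+1$ to signature, so exactly $2n$ of them are required) and the one-commutator trick~\eqref{1comm} for absorbing blocks of the form $t_{a_1}^{k_1}\cdots t_{a_m}^{k_m}\,t_{b_m}^{-k_m}\cdots t_{b_1}^{-k_1}$ --- and it correctly anticipates the ``$+1$ commutator per increment of $n$ by $2$'' structure behind the floor $\lfloor(n+3)/2\rfloor$. But the proposal stops exactly at the step you yourself flag as the main obstacle: you never produce the curve configuration, the precise lantern pair, or the combinatorics of the peeling, and that is where essentially all of the mathematical content of the lemma lives. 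So as written this is a research plan, not a proof.

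For comparison, here is what that missing core actually looks like in the paper's argument, and where your sketch drifts from it. The paper does \emph{not} run a ``chain of lanterns, each feeding its output twists into the next.'' Instead it fixes two lantern relations on $\Sigma_2^3$ that overlap in two of their four boundary curves ($a_2$ and $a_3$), with $\delta_1$ in one lantern and $\delta_2$ in the other:
\begin{align*}
t_{a_1}t_{a_2}t_{a_3}t_{\delta_1} &= t_{x_3}t_{x_1}t_{x_2},\\
t_b t_{a_2}t_{a_3}t_{\delta_2} &= t_{y_1}t_{y_2}t_{y_3}.
\end{align*}
It then raises each to the $n$-th power (using the elementary rewriting $(t_{x_3}t_{x_1})^n=\bigl(\prod_{i=1}^n (t_{x_3})^{t_{x_1}^{i-1}}\bigr)t_{x_1}^n$) and multiplies the two resulting equalities, exploiting disjointness so that the twists along the shared curves $a_2,a_3$ commute into a common block. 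After clearing the $a_i,b$ twists, $t_{\delta_1}^n t_{\delta_2}^n$ is expressed as
\[
\Bigl(\textstyle\prod_{i=1}^n (t_{x_3}t_{y_1}t_b^{-1}t_{a_3}^{-1})^{V^{i-1}}\Bigr)\cdot
\bigl(t_{x_1}^n t_{y_2}^n t_{a_1}^{-n}t_{a_2}^{-n}\,t_{x_2}^n t_{y_3}^n t_{a_2}^{-n}t_{a_3}^{-n}\bigr),\quad V=t_{x_1}t_{y_2}.
\]
The second parenthesis is a single commutator via a diffeomorphism of $\Sigma_2^3$ sending $(x_1,y_2,a_1,a_2)\mapsto(a_2,a_3,x_2,y_3)$. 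For the first, a \emph{different} symmetry sending $(x_3,y_1,b,a_3)\mapsto(a_3,b,y_1,x_3)$ makes each adjacent pair of conjugate factors a single commutator, giving $\lceil n/2\rceil$ commutators in total. Adding the one from the second parenthesis yields $N=\lfloor(n+3)/2\rfloor$. This parallel-lantern, power-then-multiply organization and the two specific symmetries are what your sketch is missing; without them there is no way to see that the commutator count lands on $\lfloor(n+3)/2\rfloor$ rather than something like $n$.

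One more small point: your signature bookkeeping is correct and matches the paper's (only commutativity/braid/conjugation/cancellation plus $2n$ lanterns appear), so once the combinatorial core is filled in, the $\sigma=2n$ claim follows as you describe.
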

\begin{proof}
Consider $\Sigma_2^3$ in Figure~\ref{fig:g=2decomm}.
By the lantern relation, we have
  \begin{eqnarray*}
	t_{a_1}t_{a_2}t_{a_3}t_{\delta_1} &=& t_{x_3}t_{x_1}t_{x_2},\\
	t_bt_{a_2}t_{a_3}t_{\delta_2} &=& t_{y_1}t_{y_2}t_{y_3}
  \end{eqnarray*}
 so that 
  \begin{eqnarray*}
	t_{a_1}t_{a_2}t_{a_3}t_{\delta_1}t_{x_2}^{-1} &=& t_{x_3}t_{x_1},\\
	t_bt_{a_2}t_{a_3}t_{\delta_2}t_{y_3}^{-1} &=& t_{y_1}t_{y_2}.
  \end{eqnarray*}
  Hence,
 \begin{eqnarray*}
	t_{a_1}^nt_{a_2}^nt_{a_3}^nt_{\delta_1}^nt_{x_2}^{-n} &=& (t_{x_3}t_{x_1})^n
		= \left( \prod _{i=1}^{n}  t_{x_1}^{i-1} t_{x_3} {t_{x_1}^{-i+1}} \right) t_{x_1}^n 
		=\left( \prod _{i=1}^{n}  \left( t_{x_3}\right) ^{ {t_{x_1}^{i-1}}} \right) t_{x_1}^n, \\
	t_b^nt_{a_2}^nt_{a_3}^nt_{\delta_2}^nt_{y_3}^{-n} &=& (t_{y_1}t_{y_2})^n
		= \left(  \prod _{i=1}^{n}   t_{y_2}^{i-1} t_{y_1}  t_{y_2}^{-i+1} \right) t_{y_2}^n 
		= \left(  \prod _{i=1}^{n}   \left( t_{y_1} \right)^{  t_{y_2}^{i-1}} \right) t_{y_2}^n .
  \end{eqnarray*}
 By multiplying the last two equalities, we get
 \begin{eqnarray*}
	t_b^nt _{a_1}^nt_{a_2}^{2n}t_{a_3}^{2n}t_{\delta_1}^n  t_{\delta_2}^n
	&=& \left(  \prod _{i=1}^{n} (t_{x_3}t_{y_1})^{V^{i-1}}\right)  t_{x_1}^n t_{y_2}^n     t_{x_2}^n t_{y_3}^n,
  \end{eqnarray*}
  where $V= t_{x_1}t_{y_2}$. Thus
 \begin{eqnarray*}
	t_{\delta_1}^n  t_{\delta_2}^n
   	&=& \left(  \prod _{i=1}^{n} \left( t_{x_3}t_{y_1} t_b^{-1}t_{a_3}^{-1}\right)^{V^{i-1}}\right)   
	(t_{x_1}^n  t_{y_2}^n t_{a_1}^{-n} t_{a_2}^{-n}     t_{x_2}^n t_{y_3}^nt_{a_2}^{-n}t_{a_3}^{-n}).
  \end{eqnarray*}
 Since there is a diffeomorphism of $\Sigma_2^3$ mapping $(x_1,y_2, a_1,a_2)$  to
 $(a_2,a_3,x_2,y_3)$, the product
 \( t_{x_1}^n  t_{y_2}^n t_{a_1}^{-n} t_{a_2}^{-n}     t_{x_2}^n t_{y_3}^nt_{a_2}^{-n}t_{a_3}^{-n}
 \) 
 is a commutator. 
 
 \medskip
\begin{figure}[ht]
\begin{tikzpicture}[scale=0.8]

\begin{scope} [xshift=0cm, yshift=0cm, scale=0.6]
  \draw[very thick,rounded corners=14pt] (3,2) --(-5.23,2) ;
  \draw[very thick,rounded corners=14pt] (-5.23,-2) -- (3,-2) ;
 \draw[very thick, dashed, rounded corners=10pt] (-5.2,2)..controls (-4.7, 1.5) and (-4.7,-1.5) .. (-5.2,-2) ;
 \draw[very thick,rounded corners=10pt] (-5.2,2)..controls (-5.7, 1.5) and (-5.7,-1.5) .. (-5.2,-2) ;
 \draw[very thick, xshift=-2.6cm] (0,0) circle [radius=0.6cm];
  \draw[very thick, xshift=0cm] (0,0) circle [radius=0.6cm];
  \draw[very thick, rounded corners=2pt] (3, 0.6)..controls (2.7, 0.6) and (2.2,0.6) .. (2,0)..controls (2.2, -0.6) and  (2.7,-0.6)..(3, -0.6) ;
  \draw[very thick, xshift=0cm] (3,1.3) ellipse (0.2 and 0.7);
  \draw[very thick, xshift=0cm] (3,-1.3) ellipse (0.2 and 0.7);
  \draw[thick, blue, rounded corners=4pt, xshift=-1.3cm] (-0.7,-0.03) -- (-0.4,-0.13)--(0.4,-0.13) --(0.7,-0.03);
 \draw[thick,  blue, dashed, rounded corners=4pt, xshift=-1.3cm] (-0.7,0.03) -- (-0.4,0.13)--(0.4,0.13) --(0.7,0.03);
  \draw[thick, blue, rounded corners=4pt, xshift=1.3cm] (-0.7,-0.03) -- (-0.4,-0.13)--(0.4,-0.13) --(0.7,-0.03);
 \draw[thick,  blue, dashed, rounded corners=4pt, xshift=1.3cm] (-0.7,0.03) -- (-0.4,0.13)--(0.4,0.13) --(0.7,0.03);
\draw[thick,  blue, rounded corners=4pt, xshift=-2.6cm ] (-0.03,0.6) -- (-0.13, 1)--(-0.13,1.6) --(-0.03,2);
\draw[thick,  blue, dashed, rounded corners=4pt, xshift=-2.6cm ] (0.03,0.6) -- (0.13, 1)--(0.13,1.6) --(0.03,2);
\draw[thick,  blue, rounded corners=4pt, xshift=-2.6cm ] (-0.03,-0.6) -- (-0.13, -1)--(-0.13,-1.6) --(-0.03,-2);
\draw[thick,  blue, dashed, rounded corners=4pt, xshift=-2.6cm ] (0.03,-0.6) -- (0.13, -1)--(0.13,-1.6) --(0.03,-2);
\node[scale=0.9] at (-3.2,1.3) {$a_1$};
\node[scale=0.9] at (-1.2,-0.6) {$a_2$};
\node[scale=0.9] at (1.4,-0.6) {$a_3$};
\node[scale=0.9] at (3.7,1.3) {$\delta_1$};
\node[scale=0.9] at (-3.2,-1.3) {$b$};
\node[scale=0.9] at (3.7,-1.3) {$\delta_2$};
\node[scale=0.9] at (-6,1.6) {$\delta_3$};
\end{scope}

\begin{scope} [xshift=6.5cm, yshift=0cm, scale=0.6]
   \draw[very thick,rounded corners=14pt] (3,2) --(-5.23,2) ;
  \draw[very thick,rounded corners=14pt] (-5.23,-2) -- (3,-2) ;
 \draw[very thick, dashed, rounded corners=10pt] (-5.2,2)..controls (-4.7, 1.5) and (-4.7,-1.5) .. (-5.2,-2) ;
 \draw[very thick,rounded corners=10pt] (-5.2,2)..controls (-5.7, 1.5) and (-5.7,-1.5) .. (-5.2,-2) ;
 \draw[very thick, xshift=-2.6cm] (0,0) circle [radius=0.6cm];
  \draw[very thick, xshift=0cm] (0,0) circle [radius=0.6cm];
  \draw[very thick, rounded corners=2pt] (3, 0.6)..controls (2.7, 0.6) and (2.2,0.6) .. (2,0)..controls (2.2, -0.6) and  (2.7,-0.6)..(3, -0.6) ;
  \draw[very thick, xshift=0cm] (3,1.3) ellipse (0.2 and 0.7);
  \draw[very thick, xshift=0cm] (3,-1.3) ellipse (0.2 and 0.7);
\draw[thick, red, rounded corners=8pt, xshift=1.3cm] (-3.45,0.4) -- (-2.8,1)--(-1.3,1.4)--(0.2,1) --(0.85,0.4);
 \draw[thick, red, dashed, rounded corners=8pt, xshift=1.3cm] (-3.4,0.35) -- (-2.8,0.8)--(-1.3,1.1)--(0.2,0.8) --(0.8,0.32);
\draw[thick, red, rounded corners=4pt, xshift=-1.3cm] (3.45,-2) ..controls (3.2,-1.1) and (-0.7,-1.6).. (-0.8,-0.35);
\draw[thick, red, rounded corners=4pt, xshift=-1.3cm] (3.5,-0.4)--(3.1,-0.75) ..controls (2.2,-1.2) and (0,-1.2)..(0.8,-0.35);
\draw[thick, red, dashed, rounded corners=4pt, xshift=1.3cm ] (0.97,-0.5) -- (1.1, -0.9)--(1.1,-1.6) --(0.97,-2);
\draw[thick, red, dashed, rounded corners=4pt, xshift=-1.3cm] (-0.78,-0.27) -- (-0.4,-0.2)--(0.4,-0.2) --(0.77, -0.27); 
  \draw[thick, blue, rounded corners=4pt, xshift=-1.3cm] (-0.7,-0.03) -- (-0.4,-0.13)--(0.4,-0.13) --(0.7,-0.03);
 \draw[thick,  blue, dashed, rounded corners=4pt, xshift=-1.3cm] (-0.7,0.03) -- (-0.4,0.13)--(0.4,0.13) --(0.7,0.03);
\draw[thick,  blue, rounded corners=4pt, xshift=-2.6cm ] (-0.03,0.6) -- (-0.13, 1)--(-0.13,1.6) --(-0.03,2);
\draw[thick,  blue, dashed, rounded corners=4pt, xshift=-2.6cm ] (0.03,0.6) -- (0.13, 1)--(0.13,1.6) --(0.03,2);

\node[scale=0.9] at (1.7,1.3) {$x_1$};
\node[scale=0.9] at (-1.4,-1.5) {$y_2$};
\end{scope}

\begin{scope} [xshift=0cm, yshift=-3cm, scale=0.6]
  \draw[very thick,rounded corners=14pt] (3,2) --(-5.23,2) ;
  \draw[very thick,rounded corners=14pt] (-5.23,-2) -- (3,-2) ;
 \draw[very thick, dashed, rounded corners=10pt] (-5.2,2)..controls (-4.7, 1.5) and (-4.7,-1.5) .. (-5.2,-2) ;
 \draw[very thick,rounded corners=10pt] (-5.2,2)..controls (-5.7, 1.5) and (-5.7,-1.5) .. (-5.2,-2) ;
 \draw[very thick, xshift=-2.6cm] (0,0) circle [radius=0.6cm];
  \draw[very thick, xshift=0cm] (0,0) circle [radius=0.6cm];
  \draw[very thick, rounded corners=2pt] (3, 0.6)..controls (2.7, 0.6) and (2.2,0.6) .. (2,0)..controls (2.2, -0.6) and  (2.7,-0.6)..(3, -0.6) ;
  \draw[very thick, xshift=0cm] (3,1.3) ellipse (0.2 and 0.7);
  \draw[very thick, xshift=0cm] (3,-1.3) ellipse (0.2 and 0.7);
\draw[thick, red, rounded corners=4pt, xshift=1.3cm] (-3.45,2) ..controls (-3.2,1.1) and (0.7,1.6).. (0.8,0.35);
\draw[thick, red, rounded corners=4pt, xshift=1.3cm] (-3.5,0.4)--(-3.1,0.75) ..controls (-2.2,1.2) and (0,1.2)..(-0.8,0.35);
\draw[thick, red, dashed, rounded corners=4pt, xshift=1.7cm ] (-3.93,0.5) -- (-4.03, 1)--(-4.03,1.6) --(-3.93,2);
\draw[thick, red, dashed, rounded corners=4pt, xshift=1.3cm] (-0.78,0.27) -- (-0.4,0.2)--(0.4,0.2) --(0.77, 0.27);
\draw[thick,  red, rounded corners=4pt, xshift=3.9cm ] (-3.93,-0.6) -- (-4.03, -1)--(-4.03,-1.6) --(-3.93,-2);
\draw[thick,  red, dashed, rounded corners=4pt, xshift=3.9cm] (-3.87,-0.6) -- (-3.77, -1)--(-3.77,-1.6) --(-3.87,-2);
  \draw[thick, blue, rounded corners=4pt, xshift=-1.3cm] (-0.7,-0.03) -- (-0.4,-0.13)--(0.4,-0.13) --(0.7,-0.03);
 \draw[thick,  blue, dashed, rounded corners=4pt, xshift=-1.3cm] (-0.7,0.03) -- (-0.4,0.13)--(0.4,0.13) --(0.7,0.03);
  \draw[thick, blue, rounded corners=4pt, xshift=1.3cm] (-0.7,-0.03) -- (-0.4,-0.13)--(0.4,-0.13) --(0.7,-0.03);
 \draw[thick,  blue, dashed, rounded corners=4pt, xshift=1.3cm] (-0.7,0.03) -- (-0.4,0.13)--(0.4,0.13) --(0.7,0.03);
\node[scale=0.9] at (1.2,1.5) {$x_2$};
\node[scale=0.9] at (0.8,-1.3) {$y_3$};
\end{scope}

\begin{scope} [xshift=6.5cm, yshift=-3cm, scale=0.6]
  \draw[very thick,rounded corners=14pt] (3,2) --(-5.23,2) ;
  \draw[very thick,rounded corners=14pt] (-5.23,-2) -- (3,-2) ;
 \draw[very thick, dashed, rounded corners=10pt] (-5.2,2)..controls (-4.7, 1.5) and (-4.7,-1.5) .. (-5.2,-2) ;
 \draw[very thick,rounded corners=10pt] (-5.2,2)..controls (-5.7, 1.5) and (-5.7,-1.5) .. (-5.2,-2) ;
 \draw[very thick, xshift=-2.6cm] (0,0) circle [radius=0.6cm];
  \draw[very thick, xshift=0cm] (0,0) circle [radius=0.6cm];
  \draw[very thick, rounded corners=2pt] (3, 0.6)..controls (2.7, 0.6) and (2.2,0.6) .. (2,0)..controls (2.2, -0.6) and  (2.7,-0.6)..(3, -0.6) ;
  \draw[very thick, xshift=0cm] (3,1.3) ellipse (0.2 and 0.7);
  \draw[very thick, xshift=0cm] (3,-1.3) ellipse (0.2 and 0.7);
\draw[thick, red, rounded corners=4pt, xshift=3.9cm ] (-3.93,0.6) -- (-4.03, 1)--(-4.03,1.6) --(-3.93,2);
 \draw[thick, red, dashed, rounded corners=4pt, xshift=3.9cm] (-3.87,0.6) -- (-3.77, 1)--(-3.77,1.6) --(-3.87,2);
\draw[thick, red, rounded corners=8pt, xshift=1.3cm] (-3.45,-0.4) -- (-2.8,-0.8)--(-1.3,-1.2)--(0.2,-0.8) --(0.85,-0.4);
 \draw[thick, red, dashed, rounded corners=8pt, xshift=1.3cm] (-3.4,-0.35) -- (-2.8,-0.7)--(-1.3,-1)--(0.2,-0.6) --(0.8,-0.35);
  \draw[thick, blue, rounded corners=4pt, xshift=1.3cm] (-0.7,-0.03) -- (-0.4,-0.13)--(0.4,-0.13) --(0.7,-0.03);
 \draw[thick,  blue, dashed, rounded corners=4pt, xshift=1.3cm] (-0.7,0.03) -- (-0.4,0.13)--(0.4,0.13) --(0.7,0.03);
\draw[thick,  blue, rounded corners=4pt, xshift=-2.6cm ] (-0.03,-0.6) -- (-0.13, -1)--(-0.13,-1.6) --(-0.03,-2);
\draw[thick,  blue, dashed, rounded corners=4pt, xshift=-2.6cm ] (0.03,-0.6) -- (0.13, -1)--(0.13,-1.6) --(0.03,-2);

\node[scale=0.9] at (-0.7,1.3) {$x_3$};
\node[scale=0.9] at (1.6,-1.2) {$y_1$};
\end{scope}
\end{tikzpicture}
\caption{The curves on $\Sigma_2^3$ used in the proof of Lemma~\ref{lem:genus=2}.} 
\label{fig:g=2decomm}
\end{figure}
\smallskip

Note also that since there is a diffeomorphism of $\Sigma_2^3$ mapping $(x_3,y_1, b,a_3)$  to
 $(a_3,b,y_1,x_3)$, the product
\[
  t_{x_3}t_{y_1} t_b^{-1}t_{a_3}^{-1} (t_{x_3}t_{y_1} t_b^{-1}t_{a_3}^{-1})^V
 \]
 is a commutator as well.  It follows that 
if $n=2k$ is even, then
  \begin{eqnarray*}
\prod _{i=1}^{n} \left( t_{x_3}t_{y_1}t_b^{-1}t_{a_3}^{-1}\right)^{V^{i-1}}
    &=& \prod _{i=1}^{k} \left( t_{x_3}t_{y_1}t_b^{-1}t_{a_3}^{-1}  
    	(t_{x_3}t_{y_1} t_b^{-1}t_{a_3}^{-1})^V    \right)^{V^{2i-2}}	
  \end{eqnarray*}
is a product of $k$ commutators, and if $n=2k+1$, then 
   \begin{eqnarray*}
 		\prod _{i=1}^{n} 		\left( t_{x_3}t_{y_1}t_b^{-1}t_{a_3}^{-1}\right)^{V^{i-1}}
    	&=& \left(  \prod _{i=1}^{n-1}   \left( t_{x_3}t_{y_1}t_b^{-1}
				t_{a_3}^{-1}\right)^{V^{i-1}}\right)  
    	(t_{x_3}t_{y_1} t_b^{-1}t_{a_3}^{-1})^{V^{2k}}
  \end{eqnarray*}
is a product of $k+1$ commutators. 

Therefore, $t_{\delta_1}^n  t_{\delta_2}^n=\prod_{i=1}^N C_i(n)$ for $N=\lfloor (n+3)/2 \rfloor$,  
  where $C_i(n)$ are commutators, varying with $n$. As we have only employed signature zero relators and $2n$ lantern relators, the relator 
 $t_{\delta_1}^{-n}  t_{\delta_2}^{-n} \prod_{i=1}^N C_i(n)=1$ has \mbox{$\sigma= 2n$.}
 \end{proof}

\medskip
\begin{figure}[ht]
\begin{tikzpicture}[scale=0.8]

\begin{scope} [xshift=0cm, yshift=0cm, scale=0.6]
  \draw[very thick,rounded corners=14pt] (5.23,-2) --(-4,-2)--(-4.83,0)-- (-4,2) --(5.23,2) ;
 \draw[very thick, rounded corners=10pt] (5.2,2)..controls (4.7, 1.5) and (4.7,-1.5) .. (5.2,-2) ;
 \draw[very thick,rounded corners=10pt] (5.2,2)..controls (5.7, 1.5) and (5.7,-1.5) .. (5.2,-2) ;
 \draw[very thick, xshift=-2.6cm] (0,0) circle [radius=0.6cm];
 \draw[very thick, xshift=0cm] (0,0) circle [radius=0.6cm];
 \draw[very thick, xshift=2.6cm] (0,0) circle [radius=0.6cm];
  \draw[thick, blue, rounded corners=4pt, xshift=-1.3cm] (-0.7,-0.03) -- (-0.4,-0.13)--(0.4,-0.13) --(0.7,-0.03);
 \draw[thick,  blue, dashed, rounded corners=4pt, xshift=-1.3cm] (-0.7,0.03) -- (-0.4,0.13)--(0.4,0.13) --(0.7,0.03);
  \draw[thick, blue, rounded corners=4pt, xshift=1.3cm] (-0.7,-0.03) -- (-0.4,-0.13)--(0.4,-0.13) --(0.7,-0.03);
 \draw[thick,  blue, dashed, rounded corners=4pt, xshift=1.3cm] (-0.7,0.03) -- (-0.4,0.13)--(0.4,0.13) --(0.7,0.03);
\draw[thick,  blue, rounded corners=4pt, xshift=2.6cm ] (-0.03,0.6) -- (-0.13, 1)--(-0.13,1.6) --(-0.03,2);
\draw[thick,  blue, dashed, rounded corners=4pt, xshift=2.6cm ] (0.03,0.6) -- (0.13, 1)--(0.13,1.6) --(0.03,2);
 \draw[thick, blue, rounded corners=4pt, xshift=-3.9cm] (-0.7,-0.03) -- (-0.4,-0.13)--(0.4,-0.13) --(0.7,-0.03);
 \draw[thick,  blue, dashed, rounded corners=4pt, xshift=-3.9cm] (-0.7,0.03) -- (-0.4,0.13)--(0.4,0.13) --(0.7,0.03);
\draw[thick,  blue, rounded corners=4pt, xshift=2.6cm ] (-0.03,-0.6) -- (-0.13, -1)--(-0.13,-1.6) --(-0.03,-2);
\draw[thick,  blue, dashed, rounded corners=4pt, xshift=2.6cm ] (0.03,-0.6) -- (0.13, -1)--(0.13,-1.6) --(0.03,-2);
\node[scale=0.9] at (-3.8,-0.6) {$a_1$};
\node[scale=0.9] at (-1.2,-0.6) {$a_2$};
\node[scale=0.9] at (1.4,-0.6) {$a_3$};
\node[scale=0.9] at (3.3,1.3) {$a_4$};
\node[scale=0.9] at (3.2,-1.3) {$b$};
\end{scope}

\begin{scope} [xshift=8.5cm, yshift=0cm, scale=0.6]
  \draw[very thick,rounded corners=14pt] (5.23,-2) --(-4,-2)--(-4.83,0)-- (-4,2) --(5.23,2) ;
 \draw[very thick, rounded corners=10pt] (5.2,2)..controls (4.7, 1.5) and (4.7,-1.5) .. (5.2,-2) ;
 \draw[very thick,rounded corners=10pt] (5.2,2)..controls (5.7, 1.5) and (5.7,-1.5) .. (5.2,-2) ;
 \draw[very thick, xshift=-2.6cm] (0,0) circle [radius=0.6cm];
 \draw[very thick, xshift=0cm] (0,0) circle [radius=0.6cm];
 \draw[very thick, xshift=2.6cm] (0,0) circle [radius=0.6cm];

\draw[thick, red, rounded corners=8pt, xshift=1.3cm] (-3.45,0.4) -- (-2.8,1)--(-1.3,1.4)--(0.2,1) --(0.85,0.4);
 \draw[thick, red, dashed, rounded corners=8pt, xshift=1.3cm] (-3.4,0.35) -- (-2.8,0.8)--(-1.3,1.1)--(0.2,0.8) --(0.8,0.32);
\draw[thick, red, rounded corners=4pt, xshift=-1.3cm] (3.45,-2) ..controls (3.2,-1.1) and (-0.7,-1.6).. (-0.8,-0.35);
\draw[thick, red, rounded corners=4pt, xshift=-1.3cm] (3.5,-0.4)--(3.1,-0.75) ..controls (2.2,-1.2) and (0,-1.2)..(0.8,-0.35);
\draw[thick, red, dashed, rounded corners=4pt, xshift=1.3cm ] (0.97,-0.5) -- (1.1, -0.9)--(1.1,-1.6) --(0.97,-2);
\draw[thick, red, dashed, rounded corners=4pt, xshift=-1.3cm] (-0.78,-0.27) -- (-0.4,-0.2)--(0.4,-0.2) --(0.77, -0.27); 
\node[scale=0.9] at (-1.7,1.3) {$x_1$};
\node[scale=0.9] at (-1.4,-1.5) {$y_2$};
\end{scope}

\begin{scope} [xshift=0cm, yshift=-3cm, scale=0.6]
   \draw[very thick,rounded corners=14pt] (5.23,-2) --(-4,-2)--(-4.83,0)-- (-4,2) --(5.23,2) ;
 \draw[very thick, rounded corners=10pt] (5.2,2)..controls (4.7, 1.5) and (4.7,-1.5) .. (5.2,-2) ;
 \draw[very thick,rounded corners=10pt] (5.2,2)..controls (5.7, 1.5) and (5.7,-1.5) .. (5.2,-2) ;
\draw[very thick, xshift=-2.6cm] (0,0) circle [radius=0.6cm];
 \draw[very thick, xshift=0cm] (0,0) circle [radius=0.6cm];
 \draw[very thick, xshift=2.6cm] (0,0) circle [radius=0.6cm];
\draw[thick, red, rounded corners=4pt, xshift=1.3cm] (-3.45,2) ..controls (-3.2,1.1) and (0.7,1.6).. (0.8,0.35);
\draw[thick, red, rounded corners=4pt, xshift=1.3cm] (-3.5,0.4)--(-3.1,0.75) ..controls (-2.2,1.2) and (0,1.2)..(-0.8,0.35);
\draw[thick, red, dashed, rounded corners=4pt, xshift=1.7cm ] (-3.93,0.5) -- (-4.03, 1)--(-4.03,1.6) --(-3.93,2);
\draw[thick, red, dashed, rounded corners=4pt, xshift=1.3cm] (-0.78,0.27) -- (-0.4,0.2)--(0.4,0.2) --(0.77, 0.27);
\draw[thick,  red, rounded corners=4pt, xshift=3.9cm ] (-3.93,-0.6) -- (-4.03, -1)--(-4.03,-1.6) --(-3.93,-2);
\draw[thick,  red, dashed, rounded corners=4pt, xshift=3.9cm] (-3.87,-0.6) -- (-3.77, -1)--(-3.77,-1.6) --(-3.87,-2);
\node[scale=0.9] at (1.2,1.5) {$x_2$};
\node[scale=0.9] at (0.8,-1.3) {$y_3$};
\end{scope}

\begin{scope} [xshift=8.5cm, yshift=-3cm, scale=0.6]
   \draw[very thick,rounded corners=14pt] (5.23,-2) --(-4,-2)--(-4.83,0)-- (-4,2) --(5.23,2) ;
 \draw[very thick, rounded corners=10pt] (5.2,2)..controls (4.7, 1.5) and (4.7,-1.5) .. (5.2,-2) ;
 \draw[very thick,rounded corners=10pt] (5.2,2)..controls (5.7, 1.5) and (5.7,-1.5) .. (5.2,-2) ;
  \draw[very thick, xshift=-2.6cm] (0,0) circle [radius=0.6cm];
 \draw[very thick, xshift=0cm] (0,0) circle [radius=0.6cm];
 \draw[very thick, xshift=2.6cm] (0,0) circle [radius=0.6cm];
\draw[thick, red, rounded corners=4pt, xshift=3.9cm ] (-3.93,0.6) -- (-4.03, 1)--(-4.03,1.6) --(-3.93,2);
 \draw[thick, red, dashed, rounded corners=4pt, xshift=3.9cm] (-3.87,0.6) -- (-3.77, 1)--(-3.77,1.6) --(-3.87,2);
\draw[thick, red, rounded corners=8pt, xshift=1.3cm] (-3.45,-0.4) -- (-2.8,-0.8)--(-1.3,-1.2)--(0.2,-0.8) --(0.85,-0.4);
 \draw[thick, red, dashed, rounded corners=8pt, xshift=1.3cm] (-3.4,-0.35) -- (-2.8,-0.7)--(-1.3,-1)--(0.2,-0.6) --(0.8,-0.35);

\node[scale=0.9] at (-0.7,1.3) {$x_3$};
\node[scale=0.9] at (1.6,-1.2) {$y_1$};
\end{scope}
\end{tikzpicture}
 \caption{The curves on $\Sigma_3^1$ used in Lemma~\ref{lem:g=3de3comm} and its proof.} 
 \label{fig:g=3de3comm}
\end{figure}
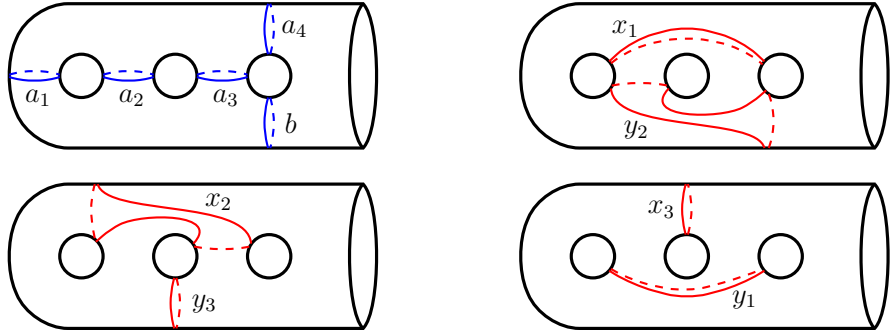
\smallskip

\smallskip
Next is a three-commutator expression we derive on the genus--$3$ surface with one boundary component. 
\begin{lemma}
\label{lem:g=3de3comm}
For $a_1, a_2, a_3, a_4$ the curves shown in $\Sigma_3^1$  in Figure~\ref{fig:g=3de3comm},  there is a relator
\[ 
t_{a_1}^{-2} t_{a_2}^{-2}t_{a_3}^{-2} t_{a_4}^{-2} C_1 C_2 C_3 =1  \ \ \ \text{ in }  \M(\Sigma_3^1)
\]
of signature $8$, where $C_1,C_2$ and $C_3$ are commutators.
\end{lemma}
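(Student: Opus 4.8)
The plan is to adapt the proof of Lemma~\ref{lem:genus=2} to the surface $\Sigma_3^1$ of Figure~\ref{fig:g=3de3comm}. First I would read off from the figure the embedded four-holed spheres carrying the lantern curves $x_1,x_2,x_3$ and $y_1,y_2,y_3$, whose boundary curves are parallel copies of the $a_i$ and $b$; the lantern relation then produces two identities of the form
\[
 t_{a_{i_1}}t_{a_{i_2}}t_{a_{i_3}}t_{c}=t_{x_3}t_{x_1}t_{x_2},\qquad
 t_{a_{j_1}}t_{a_{j_2}}t_{a_{j_3}}t_{c'}=t_{y_1}t_{y_2}t_{y_3},
\]
with $c,c'\in\{a_4,b\}$ dictated by the picture. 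Since $a_1,a_2,a_3,a_4$ are pairwise disjoint, $t_{a_1}^{2}t_{a_2}^{2}t_{a_3}^{2}t_{a_4}^{2}=(t_{a_1}t_{a_2}t_{a_3}t_{a_4})^{2}$, so it suffices to rewrite an appropriate power of the product of these two lantern relations, exactly in the spirit of the genus--$2$ argument.

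Following that argument essentially verbatim, I would expand each right-hand side via $(t_ut_v)^m=\bigl(\prod_{k=1}^{m}t_v^{\,t_u^{\,k-1}}\bigr)t_u^{\,m}$, multiply the two (suitably powered) lantern relations, cancel the twists about the curves common to both lanterns and about $b$, and transpose the $t_{a_i}^{-1}$'s to the left. This should yield an identity in $\M(\Sigma_3^1)$ of the shape
\[
 t_{a_1}^{2}t_{a_2}^{2}t_{a_3}^{2}t_{a_4}^{2}=\Bigl(\textstyle\prod\nolimits_{k}\bigl(t_{x_\bullet}t_{y_\bullet}t_{?}^{-1}t_{?}^{-1}\bigr)^{V^{k-1}}\Bigr)\cdot\bigl(t_{x_\bullet}^{\,\bullet}t_{y_\bullet}^{\,\bullet}t_{?}^{-\bullet}\cdots\bigr),
\]
where $V$ is a product of one $x$-twist and one $y$-twist. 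The first factor, being a short product of conjugates of a total-exponent-zero word whose positive and negative parts are interchanged by a diffeomorphism of $\Sigma_3^1$, becomes a product of two commutators after pairing consecutive conjugates as in Lemma~\ref{lem:genus=2}; the tail factor becomes a single commutator by the observation~\eqref{1comm}, once one exhibits a diffeomorphism of $\Sigma_3^1$ carrying one of the two relevant tuples of curves onto the other. Relabelling the conjugated entries then gives $t_{a_1}^{2}t_{a_2}^{2}t_{a_3}^{2}t_{a_4}^{2}=C_1C_2C_3$, i.e.\ the asserted relator.

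For the signature, every relator used in this derivation other than the lantern relation is a commutativity, braid, conjugation, or cancellation relator, all of signature $0$, while the lantern relator, of signature $+1$, is invoked eight times in all; hence $\sigma\bigl(t_{a_1}^{-2}t_{a_2}^{-2}t_{a_3}^{-2}t_{a_4}^{-2}C_1C_2C_3\bigr)=8$. I expect the genuine work to be entirely combinatorial-geometric: choosing the four-holed spheres in Figure~\ref{fig:g=3de3comm}, and the diffeomorphisms of $\Sigma_3^1$ permuting the various curve tuples, so that the conjugate-pairing collapses the blocks into exactly three commutators (rather than four) while the count of lantern relators comes out to eight --- and it is precisely the extra handle of $\Sigma_3^1$, compared with $\Sigma_2^3$, that supplies the room for the needed symmetries.
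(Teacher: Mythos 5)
Your proposal is headed in the right general direction (two lantern relations on $\Sigma_3^1$ sharing the boundary curves $a_1,a_2,a_3$, the observation~\eqref{1comm}, and careful bookkeeping of signature-zero moves), but the specific algebraic scaffolding you import from Lemma~\ref{lem:genus=2} is not the one the paper uses, and I do not see how to make it close. The paper's proof does \emph{not} expand $(t_ut_v)^m$ into a long product $\prod_k t_u^{t_v^{k-1}}\cdot t_v^m$ and then pair consecutive conjugates. Instead it \emph{squares each lantern relation twice} to land on the shape
\[
t_{a_1}^{4}t_{a_2}^{4}t_{a_3}^{4}t_{a_4}^{4}
   = t_{x_1}^{2}t_{x_2}^{2}\bigl(t_{x_1}^{2}t_{x_2}^{2}\bigr)^{t_{x_3}}\;
     t_{x_3}^{2}\bigl(t_{x_3}^{2}\bigr)^{t_{x_1}},
\]
and similarly for the $y$-lantern; after multiplying, one gets a \emph{three-piece} factorization $W\,W^{X}\,\bigl(U\,U^{Y}\bigr)$ in which every Dehn twist appears to the same power $2$. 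That uniformity is exactly what lets one distribute the residual negative twists $t_{a_1}^{-6}t_{a_2}^{-6}t_{a_3}^{-6}t_{a_4}^{-2}t_b^{-4}$ evenly (as $2$nd powers) into the three pieces, so that each piece is a product $t_{c_1}^{2}t_{c_2}^{2}t_{c_3}^{2}t_{c_4}^{2}t_{d_1}^{-2}t_{d_2}^{-2}t_{d_3}^{-2}t_{d_4}^{-2}$ admitting a diffeomorphism $F_i$ with $F_i(c_j)=d_j$, hence a single commutator by~\eqref{1comm}.

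Your proposed structure instead yields (with $m=4$, which is forced if the signature is to come out to $8$, i.e.\ eight lantern relators)
\[
t_{a_1}^{8}t_{a_2}^{8}t_{a_3}^{8}t_{a_4}^{4}t_{b}^{4}
  = \Bigl(\prod_{i=1}^{4}(t_{x_3}t_{y_1})^{V^{\,i-1}}\Bigr)\,
    t_{x_1}^{4}t_{y_2}^{4}t_{x_2}^{4}t_{y_3}^{4},
\]
so the positive twists in the tail all occur to the fourth power while the leading conjugates carry first powers. The deficit $t_{a_1}^{-6}t_{a_2}^{-6}t_{a_3}^{-6}t_{a_4}^{-2}t_b^{-4}$ then has to be split so that each of the four leading blocks receives the \emph{same} degree-$0$ pair $t_{?}^{-1}t_{?}^{-1}$ (as in Lemma~\ref{lem:genus=2}) and the remainder feeds a tail to which~\eqref{1comm} applies. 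But whatever uniform pair you assign to the blocks (e.g.\ $t_b^{-1}t_{a_3}^{-1}$ or $t_{a_1}^{-1}t_{a_2}^{-1}$), the remaining negative twists come out with exponent multiset like $\{6,6,2,2\}$ or $\{2,2,6,2,4\}$, never the $\{4,4,4,4\}$ needed to match the tail positives through a diffeomorphism of $\Sigma_3^1$. So the pairing-plus-tail mechanism you are relying on breaks down precisely at the step you flag as ``combinatorial-geometric'' work; it is not just a matter of finding the right diffeos, the exponents genuinely do not line up. To rescue the idea one would have to abandon uniform blocks, which is exactly where the paper's ``square-twice'' decomposition comes in --- it is not an aesthetic choice but the device that makes all the twists appear with the same exponent so that each of the three pieces can be closed up by a single application of~\eqref{1comm}. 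Your outline is therefore missing the key algebraic manipulation, and a direct transplant of the Lemma~\ref{lem:genus=2} argument will not go through.
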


\begin{proof}
By the lantern relation, we have
 \begin{equation*}
	t_{a_1}t_{a_2}t_{a_3}t_{a_4} =t_{x_2} t_{x_3} t_{x_1} 
  \end{equation*}
or 
 	\[
		t_{x_2}^{-1}t_{a_1}t_{a_2}t_{a_3}t_{a_4} =t_{x_3}t_{x_1}.
  	\] 
 This yields the equality
     \begin{eqnarray*}
	t_{x_2}^{-2}t_{a_1}^2t_{a_2}^2t_{a_3}^2t_{a_4}^2 
		&=& (t_{x_3}t_{x_1})^2\\
		&=& t_{x_3} (t_{x_3})^{t_{x_1}}   t_{x_1}^2
      \end{eqnarray*}
so that
     \begin{eqnarray*}
	t_{a_1}^2t_{a_2}^2t_{a_3}^2t_{a_4}^2 
		&=& t_{x_1}^{2} t_{x_2}^2   t_{x_3} (t_{x_3})^{t_{x_1}}. 
      \end{eqnarray*}
Hence,
  \begin{eqnarray}
t_{a_1}^4t_{a_2}^4t_{a_3}^4t_{a_4}^4 
	&=& t_{x_1}^2t_{x_2}^2t_{x_3} \left( t_{a_1}^2t_{a_2}^2t_{a_3}^2t_{a_4}^2\right) (t_{x_3})^{t_{x_1}} \nonumber \\
	&=& t_{x_1}^2t_{x_2}^2t_{x_3}  \left(  t_{x_1}^2t_{x_2}^2t_{x_3}(t_{x_3})^{t_{x_1}} \right) (t_{x_3})^{t_{x_1}} \nonumber \\
	&=& t_{x_1}^2t_{x_2}^2\left(  t_{x_1}^2t_{x_2}^2  \right)^{t_{x_3} }  t_{x_3}^2(t_{x_3}^2)^{t_{x_1}}.\label{4444=xxxx}
  \end{eqnarray}

By a similar computation, the lantern relation
   \begin{equation*}
	t_{a_1}t_{a_2}t_{a_3}t_b
		= t_{y_1}t_{y_2}t_{y_3} 
  \end{equation*}
yields
     \begin{eqnarray*}
	t_{a_1}^2t_{a_2}^2t_{a_3}^2t_{b}^2 
		&=& t_{y_2}^{2} t_{y_3}^2   t_{y_1} (t_{y_1})^{t_{y_2}} 
      \end{eqnarray*}
and
  \begin{eqnarray}
t_{a_1}^4t_{a_2}^4t_{a_3}^4t_{b}^4 
	&=& t_{y_2}^2t_{y_3}^2\left(  t_{y_2}^2t_{y_3}^2  \right)^{t_{y_1} }  t_{y_1}^2(t_{y_1}^2)^{t_{y_2}}.\label{4444=yyyy}
  \end{eqnarray}

From the equalities~\eqref{4444=xxxx} and~\eqref{4444=yyyy} we get
  \begin{eqnarray*}
t_{a_1}^8 &&  \hspace*{-0.6cm} t_{a_2}^8 t_{a_3}^8 t_{a_4}^4t_{b}^4  \\
   &=& t_{x_1}^2t_{x_2}^2\left(  t_{x_1}^2t_{x_2}^2  \right)^{t_{x_3} } 
   	 t_{x_3}^2 (t_{x_3}^2)^{t_{x_1}} 
	\cdot  t_{y_2}^2t_{y_3}^2\left(  t_{y_2}^2t_{y_3}^2  \right)^{t_{y_1} }  
	t_{y_1}^2(t_{y_1}^2)^{t_{y_2}}\\
   &=& 	\left(  t_{x_1}^2 t_{y_2}^2  t_{x_2}^2 t_{y_3}^2 \right)  
   		\left(  t_{x_1}^2 t_{y_2}^2 t_{x_2}^2  t_{y_3}^2   \right)^{X} 
		\left(   t_{x_3}^2 t_{y_1}^2  (t_{x_3}^2t_{y_1}^2)^{Y} \right),
  \end{eqnarray*}
where $X= t_{x_3}  t_{y_1}$, $Y=t_{x_1}  t_{y_2}$. We then write
 \begin{eqnarray*}
t_{a_1}^2 t_{a_2}^2t_{a_3}^2 t_{a_4}^2  
   &=&\left(  t_{x_1}^2 t_{y_2}^2 t_{a_1}^{-2} t_{a_2}^{-2} \cdot t_{x_2}^2 t_{y_3}^2 t_{a_2}^{-2} t_{a_3}^{-2} \right)  \\
   && \hspace*{1.5cm} \left(  t_{x_1}^2 t_{y_2}^2 t_{a_3}^{-2} t_{a_4}^{-2}\cdot t_{x_2}^2  t_{y_3}^2 t_{a_1}^{-2} t_{b}^{-2}  \right)^{X} \\
    &&  \hspace*{3cm} \left(   t_{x_3}^2 t_{y_1}^2t_{a_1}^{-2} t_{a_3}^{-2} \cdot  (t_{x_3}^2t_{y_1}^2t_{a_2}^{-2} t_{b}^{-2} )^{Y}\right).
  \end{eqnarray*}

We now show that each of the three factors on the right-hand side is a commutator.
It is easy to see (e.g.  by cutting the surface along the given quadruples of curves) that there are  $F_i \in \textrm{Diff}^+(\Sigma_3^1)$,  for $i=1,2,3$,  such that 
\begin{itemize}
\item  $F_1(x_1,y_2, a_1, a_2)=(a_2,a_3,x_2,y_3)$
\item  $F_2(x_1,y_2, a_3, a_4)=(a_1,b,x_2,y_3)$ 
\item  $F_3(x_3,y_1, a_1, a_3)=(a_2,b,x_3,y_1)$.
\end{itemize}

We now have
\begin{eqnarray*}
 \left( t_{x_1}^2 t_{y_2}^2 t_{a_1}^{-2}t_{a_2}^{-2}\right)  \cdot\left( t_{x_2}^2 t_{y_3}^2 t_{a_2}^{-2}t_{a_3}^{-2}\right)  
     		&=&  \left( t_{x_1}^2 t_{y_2}^2 t_{a_1}^{-2}t_{a_2}^{-2}\right)  \left( t_{a_1}^2 t_{a_2}^2 t_{x_1}^{-2}t_{y_2}^{-2}\right) ^{F_1}\\
		&=&  [t_{x_1}^2 t_{y_2}^2 t_{a_1}^{-2}t_{a_2}^{-2}, F_1],
  \end{eqnarray*}
  \begin{eqnarray*}
\left( t_{x_1}^2 t_{y_2}^2 t_{a_3}^{-2} t_{a_4}^{-2}\right) \cdot \left( t_{x_2}^2 t_{y_3}^2 t_{a_1}^{-2}t_b^{-2}\right)  
     		&=&  \left( t_{x_1}^2 t_{y_2}^2 t_{a_3}^{-2}t_{a_4}^{-2}\right)  \left( t_{a_3}^2 t_{a_4}^2 t_{x_1}^{-2}t_{y_2}^{-2}\right) ^{F_2}\\
		&=&  [t_{x_1}^2 t_{y_2}^2 t_{a_3}^{-2}t_{a_4}^{-2}, F_2],
  \end{eqnarray*}
and
 \begin{eqnarray*}
 \left( t_{x_3}^2 t_{y_1}^2  t_{a_1}^{-2}t_{a_3}^{-2}\right)   \left( t_{x_3}^2 t_{y_1}^2 t_{a_2}^{-2}t_b^{-2}\right) ^Y 
     		&=&  \left( t_{x_3}^2 t_{y_1}^2 t_{a_1}^{-2}t_{a_3}^{-2}\right) \left( t_{a_1}^2 t_{a_3}^2 t_{x_3}^{-2}t_{y_1}^{-2}\right) ^{YF_3}\\
		&=&  [t_{x_3}^2 t_{y_1}^2 t_{a_1}^{-2} t_{a_3}^{-2}, YF_3].
  \end{eqnarray*}
Since  a conjugate of a commutator is a commutator, we have written $t_{a_1}^2t_{a_2}^2t_{a_3}^2t_{a_4}^2$
as a product $C_1 C_2 C_3$ of three commutators.

A simple book keeping of the relators we have used now shows that the relator
$t_{a_1}^{-2}t_{a_2}^{-2}t_{a_3}^{-2}t_{a_4}^{-2}\cdot C_1C_2 C_3=1$
has signature $8$,  as a result of the eight lantern relators we employed in its derivation.
\end{proof}
\smallskip

\medskip
\begin{figure}[h]
\begin{tikzpicture}[scale=0.73]
\begin{scope} [xshift=0cm, yshift=0cm, scale=0.6]
  \draw[very thick, rounded corners=14pt] (-7.83,2) --(7.83,2) ;
  \draw[very thick, rounded corners=14pt] (-7.83,-2) -- (7.83,-2) ;
  \draw[very thick, xshift=2.6cm, rounded corners=10pt] (5.2,2)..controls (4.7, 1.5) and (4.7,-1.5) .. (5.2,-2) ;
  \draw[very thick, xshift=2.6cm, rounded corners=10pt] (5.2,2)..controls (5.7, 1.5) and (5.7,-1.5) .. (5.2,-2) ;
 \draw[very thick, dashed, xshift=-2.6cm, rounded corners=10pt] (-5.2,2)..controls (-4.7, 1.5) and (-4.7,-1.5) .. (-5.2,-2) ;
 \draw[very thick, xshift=-2.6cm, rounded corners=10pt] (-5.2,2)..controls (-5.7, 1.5) and (-5.7,-1.5) .. (-5.2,-2) ;
 \draw[very thick, xshift=-5.2cm] (0,0) circle [radius=0.6cm];
 \draw[very thick, xshift=-2.6cm] (0,0) circle [radius=0.6cm];
 \draw[very thick, xshift=0cm] (0,0) circle [radius=0.6cm];
 \draw[very thick, xshift=2.6cm] (0,0) circle [radius=0.6cm];
 \draw[very thick, xshift=5.2cm] (0,0) circle [radius=0.6cm];
\draw[thick,  blue, rounded corners=4pt, xshift=-5.2cm ] (-0.03,0.6) -- (-0.13, 1)--(-0.13,1.6) --(-0.03,2);
\draw[thick,  blue, dashed, rounded corners=4pt, xshift=-5.2cm ] (0.03,0.6) -- (0.13, 1)--(0.13,1.6) --(0.03,2);
  \draw[thick, blue, rounded corners=4pt, xshift=-3.9cm] (-0.7,-0.03) -- (-0.4,-0.13)--(0.4,-0.13) --(0.7,-0.03);
 \draw[thick,  blue, dashed, rounded corners=4pt, xshift=-3.9cm] (-0.7,0.03) -- (-0.4,0.13)--(0.4,0.13) --(0.7,0.03);
 \draw[thick, blue, rounded corners=4pt, xshift=-1.3cm] (-0.7,-0.03) -- (-0.4,-0.13)--(0.4,-0.13) --(0.7,-0.03);
 \draw[thick,  blue, dashed, rounded corners=4pt, xshift=-1.3cm] (-0.7,0.03) -- (-0.4,0.13)--(0.4,0.13) --(0.7,0.03);
  \draw[thick, blue, rounded corners=4pt, xshift=1.3cm] (-0.7,-0.03) -- (-0.4,-0.13)--(0.4,-0.13) --(0.7,-0.03);
 \draw[thick,  blue, dashed, rounded corners=4pt, xshift=1.3cm] (-0.7,0.03) -- (-0.4,0.13)--(0.4,0.13) --(0.7,0.03);
  \draw[thick, blue, rounded corners=4pt, xshift=3.9cm] (-0.7,-0.03) -- (-0.4,-0.13)--(0.4,-0.13) --(0.7,-0.03);
 \draw[thick,  blue, dashed, rounded corners=4pt, xshift=3.9cm] (-0.7,0.03) -- (-0.4,0.13)--(0.4,0.13) --(0.7,0.03);
\draw[thick,  blue, rounded corners=4pt, xshift=0cm ] (-0.03,0.6) -- (-0.13, 1)--(-0.13,1.6) --(-0.03,2);
\draw[thick,  blue, dashed, rounded corners=4pt, xshift=0cm ] (0.03,0.6) -- (0.13, 1)--(0.13,1.6) --(0.03,2);
\draw[thick,  blue, rounded corners=4pt, xshift=0cm ] (-0.03,-0.6) -- (-0.13, -1)--(-0.13,-1.6) --(-0.03,-2);
\draw[thick,  blue, dashed, rounded corners=4pt, xshift=0cm ] (0.03,-0.6) -- (0.13, -1)--(0.13,-1.6) --(0.03,-2);
\draw[thick,  blue, rounded corners=4pt, xshift=5.2cm ] (-0.03,-0.6) -- (-0.13, -1)--(-0.13,-1.6) --(-0.03,-2);
\draw[thick,  blue, dashed, rounded corners=4pt, xshift=5.2cm ] (0.03,-0.6) -- (0.13, -1)--(0.13,-1.6) --(0.03,-2);
\node[scale=0.9] at (-5.8,1.3) {$a_1$};
\node[scale=0.9] at (-3.8,-0.6) {$a_2$};
\node[scale=0.9] at (-1.2,-0.6) {$a_3$};
\node[scale=0.9] at (1.4,-0.6) {$a_4$};
\node[scale=0.9] at (4,-0.6) {$a_5$};
\node[scale=0.9] at (0.5,1.3) {$a$};
\node[scale=0.9] at (0.5,-1.3) {$b$};
\node[scale=0.9] at (5.7,-1.3) {$c$};
\end{scope}
\begin{scope} [xshift=10cm, yshift=0cm, scale=0.6]
  \draw[very thick, rounded corners=14pt] (-7.83,2) --(7.83,2) ;
  \draw[very thick, rounded corners=14pt] (-7.83,-2) -- (7.83,-2) ;
  \draw[very thick, xshift=2.6cm, rounded corners=10pt] (5.2,2)..controls (4.7, 1.5) and (4.7,-1.5) .. (5.2,-2) ;
  \draw[very thick, xshift=2.6cm, rounded corners=10pt] (5.2,2)..controls (5.7, 1.5) and (5.7,-1.5) .. (5.2,-2) ;
 \draw[very thick, dashed, xshift=-2.6cm, rounded corners=10pt] (-5.2,2)..controls (-4.7, 1.5) and (-4.7,-1.5) .. (-5.2,-2) ;
 \draw[very thick, xshift=-2.6cm, rounded corners=10pt] (-5.2,2)..controls (-5.7, 1.5) and (-5.7,-1.5) .. (-5.2,-2) ;
 \draw[very thick, xshift=-5.2cm] (0,0) circle [radius=0.6cm];
 \draw[very thick, xshift=-2.6cm] (0,0) circle [radius=0.6cm];
 \draw[very thick, xshift=0cm] (0,0) circle [radius=0.6cm];
 \draw[very thick, xshift=2.6cm] (0,0) circle [radius=0.6cm];
 \draw[very thick, xshift=5.2cm] (0,0) circle [radius=0.6cm];
\draw[thick, red, rounded corners=8pt, xshift=-1.3cm] (-3.45,0.4) -- (-2.8,1)--(-1.3,1.4)--(0.2,1) --(0.85,0.4);
 \draw[thick, red, dashed, rounded corners=8pt, xshift=-1.3cm] (-3.4,0.35) -- (-2.8,0.8)--(-1.3,1.1)--(0.2,0.8) --(0.8,0.32);
\draw[thick, red, rounded corners=4pt, xshift=1.3cm] (3.45,-2) ..controls (3.2,-1.1) and (-0.7,-1.6).. (-0.8,-0.35);
\draw[thick, red, rounded corners=4pt, xshift=1.3cm] (3.5,-0.4)--(3.1,-0.75) ..controls (2.2,-1.2) and (0,-1.2)..(0.8,-0.35);
\draw[thick, red, dashed, rounded corners=4pt, xshift=3.9cm ] (0.97,-0.5) -- (1.1, -0.9)--(1.1,-1.6) --(0.97,-2);
\draw[thick, red, dashed, rounded corners=4pt, xshift=1.3cm] (-0.78,-0.27) -- (-0.4,-0.2)--(0.4,-0.2) --(0.77, -0.27); 
  \draw[thick, blue, rounded corners=4pt, xshift=-3.9cm] (-0.7,-0.03) -- (-0.4,-0.13)--(0.4,-0.13) --(0.7,-0.03);
 \draw[thick,  blue, dashed, rounded corners=4pt, xshift=-3.9cm] (-0.7,0.03) -- (-0.4,0.13)--(0.4,0.13) --(0.7,0.03);
  \draw[thick, blue, rounded corners=4pt, xshift=1.3cm] (-0.7,-0.03) -- (-0.4,-0.13)--(0.4,-0.13) --(0.7,-0.03);
 \draw[thick,  blue, dashed, rounded corners=4pt, xshift=1.3cm] (-0.7,0.03) -- (-0.4,0.13)--(0.4,0.13) --(0.7,0.03);
\node[scale=0.9] at (-4.3,1.3) {$x_1$};
\node[scale=0.9] at (1.2,-1.5) {$y_2$};
\end{scope}
\begin{scope} [xshift=0cm, yshift=-3cm, scale=0.6]
  \draw[very thick, rounded corners=14pt] (-7.83,2) --(7.83,2) ;
  \draw[very thick, rounded corners=14pt] (-7.83,-2) -- (7.83,-2) ;
  \draw[very thick, xshift=2.6cm, rounded corners=10pt] (5.2,2)..controls (4.7, 1.5) and (4.7,-1.5) .. (5.2,-2) ;
  \draw[very thick, xshift=2.6cm, rounded corners=10pt] (5.2,2)..controls (5.7, 1.5) and (5.7,-1.5) .. (5.2,-2) ;
 \draw[very thick, dashed, xshift=-2.6cm, rounded corners=10pt] (-5.2,2)..controls (-4.7, 1.5) and (-4.7,-1.5) .. (-5.2,-2) ;
 \draw[very thick, xshift=-2.6cm, rounded corners=10pt] (-5.2,2)..controls (-5.7, 1.5) and (-5.7,-1.5) .. (-5.2,-2) ;
 \draw[very thick, xshift=-5.2cm] (0,0) circle [radius=0.6cm];
 \draw[very thick, xshift=-2.6cm] (0,0) circle [radius=0.6cm];
 \draw[very thick, xshift=0cm] (0,0) circle [radius=0.6cm];
 \draw[very thick, xshift=2.6cm] (0,0) circle [radius=0.6cm];
 \draw[very thick, xshift=5.2cm] (0,0) circle [radius=0.6cm];
\draw[thick, red, rounded corners=4pt, xshift=-1.3cm] (-3.45,2) ..controls (-3.2,1.1) and (0.7,1.6).. (0.8,0.35);
\draw[thick, red, rounded corners=4pt, xshift=-1.3cm] (-3.5,0.4)--(-3.1,0.75) ..controls (-2.2,1.2) and (0,1.2)..(-0.8,0.35);
\draw[thick, red, dashed, rounded corners=4pt, xshift=-0.9cm ] (-3.93,0.5) -- (-4.03, 1)--(-4.03,1.6) --(-3.93,2);
\draw[thick, red, dashed, rounded corners=4pt, xshift=-1.3cm] (-0.78,0.27) -- (-0.4,0.2)--(0.4,0.2) --(0.77, 0.27);
\draw[thick,  red, rounded corners=4pt, xshift=6.5cm ] (-3.93,-0.6) -- (-4.03, -1)--(-4.03,-1.6) --(-3.93,-2);
\draw[thick,  red, dashed, rounded corners=4pt, xshift=6.5cm] (-3.87,-0.6) -- (-3.77, -1)--(-3.77,-1.6) --(-3.87,-2);
 \draw[thick, blue, rounded corners=4pt, xshift=-1.3cm] (-0.7,-0.03) -- (-0.4,-0.13)--(0.4,-0.13) --(0.7,-0.03);
 \draw[thick,  blue, dashed, rounded corners=4pt, xshift=-1.3cm] (-0.7,0.03) -- (-0.4,0.13)--(0.4,0.13) --(0.7,0.03);
\draw[thick,  blue, rounded corners=4pt, xshift=5.2cm ] (-0.03,-0.6) -- (-0.13, -1)--(-0.13,-1.6) --(-0.03,-2);
\draw[thick,  blue, dashed, rounded corners=4pt, xshift=5.2cm ] (0.03,-0.6) -- (0.13, -1)--(0.13,-1.6) --(0.03,-2);
\node[scale=0.9] at (-1.4,1.5) {$x_2$};
\node[scale=0.9] at (3.3,-1.3) {$y_3$};
\end{scope}
\begin{scope} [xshift=10cm, yshift=-3cm, scale=0.6]
  \draw[very thick, rounded corners=14pt] (-7.83,2) --(7.83,2) ;
  \draw[very thick, rounded corners=14pt] (-7.83,-2) -- (7.83,-2) ;
  \draw[very thick, xshift=2.6cm, rounded corners=10pt] (5.2,2)..controls (4.7, 1.5) and (4.7,-1.5) .. (5.2,-2) ;
  \draw[very thick, xshift=2.6cm, rounded corners=10pt] (5.2,2)..controls (5.7, 1.5) and (5.7,-1.5) .. (5.2,-2) ;
 \draw[very thick, dashed, xshift=-2.6cm, rounded corners=10pt] (-5.2,2)..controls (-4.7, 1.5) and (-4.7,-1.5) .. (-5.2,-2) ;
 \draw[very thick, xshift=-2.6cm, rounded corners=10pt] (-5.2,2)..controls (-5.7, 1.5) and (-5.7,-1.5) .. (-5.2,-2) ;
 \draw[very thick, xshift=-5.2cm] (0,0) circle [radius=0.6cm];
 \draw[very thick, xshift=-2.6cm] (0,0) circle [radius=0.6cm];
 \draw[very thick, xshift=0cm] (0,0) circle [radius=0.6cm];
 \draw[very thick, xshift=2.6cm] (0,0) circle [radius=0.6cm];
 \draw[very thick, xshift=5.2cm] (0,0) circle [radius=0.6cm];
\draw[thick, red, rounded corners=4pt, xshift=1.3cm ] (-3.93,0.6) -- (-4.03, 1)--(-4.03,1.6) --(-3.93,2);
 \draw[thick, red, dashed, rounded corners=4pt, xshift=1.3cm] (-3.87,0.6) -- (-3.77, 1)--(-3.77,1.6) --(-3.87,2);
\draw[thick, red, rounded corners=8pt, xshift=3.9cm] (-3.45,-0.4) -- (-2.8,-0.8)--(-1.3,-1.2)--(0.2,-0.8) --(0.85,-0.4);
 \draw[thick, red, dashed, rounded corners=8pt, xshift=3.9cm] (-3.4,-0.35) -- (-2.8,-0.7)--(-1.3,-1)--(0.2,-0.6) --(0.8,-0.35);
\node[scale=0.9] at (-3.3,1.3) {$x_3$};
\node[scale=0.9] at (4.2,-1.2) {$y_1$};
\end{scope}
\end{tikzpicture}
\caption{The curves on $\Sigma_5^2$ used in the proof of Lemma~\ref{lem:g=5}. }
\label{fig:g=5de2comm}
\end{figure}
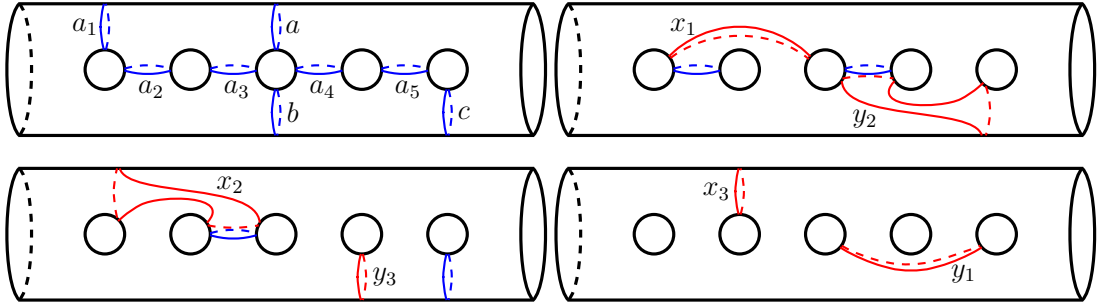

\bigskip
Lastly, we obtain a two-commutator expression on the genus--$5$ surface $\Sigma_5^2$:

\begin{lemma}\label{lem:g=5}
Let $a$ and $b$ be pairwise disjoint nonseparating curves on $\Sigma_5^2$ such that $a\cup b$
separates the surface $\Sigma_5^2$ into two genus--$2$ components, and let $x$ be another curve nonseparating
in the complement of $a\cup b$.  Then there is a relator
\[
 t_{a}^{-1} t_{b}^{-1}t_{x}^{-2}C_1C_2=1  \ \ \ \text{ in }  \M(\Sigma_5^2)
 \]
 with signature $4$.
 \end{lemma}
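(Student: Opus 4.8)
The plan is to realize this relation as a consequence of four lantern relations supported in $\Sigma_5^2$, and then to compress the outcome into two commutators by means of the one-commutator identity~\eqref{1comm}, exactly in the spirit of the proofs of Lemmas~\ref{lem:genus=2} and~\ref{lem:g=3de3comm}. Since the only relators of nonzero signature that will enter are the four lanterns, each contributing $+1$, the resulting relator will automatically have signature $4$.

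First I would use the change of coordinates principle to reduce to a single model. Any two triples $(a,b,x)$ satisfying the hypotheses are carried to one another by an orientation-preserving diffeomorphism of $\Sigma_5^2$, and conjugating a relator by such a diffeomorphism changes neither its signature nor the number of commutators needed to express it; hence it suffices to treat the explicit curves of Figure~\ref{fig:g=5de2comm}. There $a\cup b$ separates $\Sigma_5^2$ into two copies $L$ and $R$ of $\Sigma_2^3$, one carrying each boundary component of $\Sigma_5^2$, and $x$ is a fixed nonseparating curve in (say) $L$ which is arranged to coincide with one of the distinguished curves of the lantern configuration on that side.

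Next comes the computation. As in Lemma~\ref{lem:genus=2}, where two lanterns in $\Sigma_2^3$ produce $t_{\delta_1}^n t_{\delta_2}^n$ as a product of commutators, and as in Lemma~\ref{lem:g=3de3comm}, where two base lanterns are raised to higher powers before being combined, I would take two base lantern relations on $\Sigma_5^2$ — one over the handles on the $a$--side and one over the handles on the $b$--side, as pictured in Figure~\ref{fig:g=5de2comm} — and pass to their squares, which is legitimate because the four boundary curves of each lantern are pairwise disjoint. Multiplying the two squared relations and transporting the superfluous Dehn twists past one another using only the commutativity, braid and conjugation relators (all of signature $0$) together with insertions and removals of cancelling pairs, one side should collapse to $t_a t_b t_x^2$ — the curve $x$ being chosen precisely so that the extra $t_x^2$ cancels the twists about $x$ created by the squaring — while the other side rearranges into a product $C_1C_2$ of two commutators, each produced from~\eqref{1comm} applied to an explicit diffeomorphism of $\Sigma_5^2$ permuting the relevant quadruple of curves (verified by cutting $\Sigma_5^2$ along those curves). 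A tally of the relators used then yields $\sigma\big(t_a^{-1}t_b^{-1}t_x^{-2}C_1C_2\big)=4$.

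The main obstacle will be the organizational bookkeeping in this last step: one must choose the model curves, the two base lanterns, and the two diffeomorphisms so that, after squaring, multiplying, and cancelling $t_x^2$, what remains is on the nose a product of the form $t_{c_1}t_{c_2}t_{c_3}t_{c_4}\,t_{d_4}^{-1}t_{d_3}^{-1}t_{d_2}^{-1}t_{d_1}^{-1}$ with $d_i$ the image of $c_i$ under the diffeomorphism, so that~\eqref{1comm} applies verbatim and exactly the exponents $1,1,2$ survive on $a,b,x$ with no residual twists. (If one instead prefers to quote Lemma~\ref{lem:genus=2} directly for the $t_a t_b$ part and to produce $t_x^2$ from two further lanterns as a single commutator, the two must then be fused via Lemma~\ref{lem:commutators}(ii), which again forces a careful choice of supporting subsurfaces; either route, the combinatorial core is the same.)
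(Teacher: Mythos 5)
Your proposal matches the paper's proof: the paper places one lantern in each of the two complementary genus--$2$ pieces of the $a\cup b$ cut, squares each (four lantern relators in total), multiplies the two identities using the disjointness of their supports, and rearranges the leftover product $t_a t_b t_{a_1}^2$ into two factors, each realized as a commutator via~\eqref{1comm} after exhibiting an explicit diffeomorphism, giving signature $4$. One small slip in your narrative: $t_x^2 = t_{a_1}^2$ does not ``cancel twists about $x$ created by the squaring''---it is simply the unabsorbed residue among the lantern boundary twists $t_{a_1}^2\cdots t_{a_5}^2 t_a^2 t_b^2 t_c^2$ after all the others have been distributed into the two would-be commutator factors.
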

\begin{proof}
Consider the curves on $\Sigma_5^2$ as illustrated in Figure~\ref{fig:g=5de2comm}.
By the lantern relation, we have
 \begin{eqnarray*}
	t_{a_1}t_{a_2}t_{a_3}t_{a} &=& t_{x_2}t_{x_3}t_{x_1},\\ 
	t_{b}t_{a_4}t_{a_5}t_{c} &=& t_{y_1}t_{y_2}t_{y_3}. 
  \end{eqnarray*}
It follows now as in the proof of Lemma~\ref{lem:g=3de3comm} that
 \begin{eqnarray*}
	t_{a_1}^2t_{a_2}^2t_{a_3}^2t_{a}^2 
		&=& t_{x_1}^2t_{x_2}^2 t_{x_3} t_{x_3}^{t_{x_1}}, \\
	t_{b}^2t_{a_4}^2t_{a_5}^2t_{c}^2 
		&=& t_{y_2}^2t_{y_3}^2 t_{y_1} t_{y_1}^{t_{y_2}},
 \end{eqnarray*}
so that we have
 \begin{eqnarray*}
	t_{a_1}^2t_{a_2}^2t_{a_3}^2t_{a_4}^2t_{a_5}^2t_{a}^2 t_{b}^2t_{c}^2 
		&=& t_{x_1}^2 t_{y_2}^2 \cdot t_{x_2}^2 t_{y_3}^2 \cdot t_{x_3}  t_{y_1}  \cdot   (t_{x_3}t_{y_1})^{t_{x_1}t_{y_2}},
 \end{eqnarray*}
or equivalently
\[
t_at_b t_{a_1}^2
=
  (\, t_{x_1}^2 t_{y_2}^2 t_{a_2}^{-2}t_{a_4}^{-2}  \cdot \, t_{x_2}^2 t_{y_3}^2  t_{a_3}^{-2} t_{c}^{-2} \,)
  \cdot
   (\, t_{x_3}  t_{y_1} t_a^{-1} t_{a_5}^{-1}  \cdot   (t_{x_3} t_{y_1}t_b^{-1}t_{a_5}^{-1})^{t_{x_1}t_{y_2}} \,).   
\]
One can easily find $F_1, F_2 \in \textrm{Diff}^+(\Sigma_5^2)$ such that
\begin{itemize}
\item  $F_1(x_1,y_2,a_2,a_4)=(a_3,c,x_2,y_3)$,
\item $F_2(x_3,y_1,a,a_5)=(b,a_5,x_3,y_1)$.
\end{itemize}
It follows that,  by using~\eqref{1comm} once again, we can express  $t_{x_1}^2 t_{y_2}^2 t_{a_2}^{-2}t_{a_4}^{-2}  \cdot \, t_{x_2}^2 t_{y_3}^2  t_{a_3}^{-2} t_{c}^{-2}$,  as well as $t_{x_3}  t_{y_1} t_a^{-1} t_{a_5}^{-1}  \cdot   (t_{x_3} t_{y_1}t_b^{-1}t_{a_5}^{-1})^{t_{x_1}t_{y_2}}$,  as a single commutator.   
We thus get a relator
$ t_{a}^{-1} t_{b}^{-1}t_{a_1}^{-2}C_1C_2=1$,  with signature $4$.
\end{proof}

\medskip
\subsection{Commutator lengths of some mapping classes}  \

 For an element $x$ in the commutator subgroup of a group $G$, let $\cl(x)$ denote its \emph{commutator length}, the minimum number of commutators needed to 
 express $x$ as a product of commutators, and let  $ \scl (x):=\lim_{n\to\infty} \frac{\cl (x^n)}{n}$ 
 be its \emph{stable commutator length} \cite{Calegari}.

 Let $c$ be a nonseparating curve on $\Sigma_g^b$.
 Recall that the mapping class group $\M(\Sigma_g^b)$ is perfect for $g \geq 3$, whereas $H_1(\M(\Sigma_2^b))=\Z_{10}$, generated by the class of $t_c$; see e.g.~\cite{Korkmaz2002}.  It should be clear from our rendition of 
 Tsuboi's trick in Section~\ref{sec:comm4comm} that $\cl(t_c^n)=2$ in $\M(\Sigma_g^b)$ 
 when $g$ is large enough (c.f. \cite{KotschickInfinite}). On the other hand, $\cl (t_c^n) \geq 2$ for any $g, n \in \Z^+$ \cite{BraungardtKotschick}. 
 Ensuing these facts is the question below,  which is a refinement of Mess' question on 
 $\cl (t_c^n)$ in \cite{Kirby}. To ease the upcoming discussion, let us define $g_2(n)$ to be 
 the minimum genus $g \geq 2$ such that $\cl(t_c^n)=2$ in $\M(\Sigma_g^1)$ 
 for a nonseparating curve $c$, where it makes sense only to consider $n \equiv 0$ (mod $10$) when $g=2$.
 
\begin{question}
For a given positive integer $n$,  what is $g_2(n)$?
\end{question}

 The minimal genus in $\M(\Sigma_g^b)$ with $b>1$ is the same as the one in $\M(\Sigma_g^1)$
 since two surfaces of 
 the same genus with boundaries embed into each other,  and thus, we can cater commutator 
 expressions from one another.  By the same reasoning,  any commutator expression for 
 $t_c^n$ in $\M(\Sigma_g^1)$ provides an upper bound for the minimal genus in $\M(\Sigma_g)$, too. However, it is not immediately clear to us whether the qualitative difference here would translate to a quantitative one; in fact, when $g=1$, we can show that $\cl(t_c^{12})=2$ in $\M(\Sigma_1)$, but certainly not in $\M(\Sigma_1^1)$, where no nontrivial power of $t_c$ is a product of commutators. Lastly, let us add that there is no reason ---other than us not wanting to digress here any further---
 for not considering this question for separating curves.
 
 Without sharper lower bounds in hand,  it is  challenging to answer the above question 
 in full generality.  Nonetheless we are able to determine the minimal genus $g_2(n)$ for a few values of  $n$. We record them here:

\begin{cor} \label{cor:cl}
We have $g_2(1)=g_2(2)=g_2(4)=3$, and $g_2(10)=2$.
\end{cor}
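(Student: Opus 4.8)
The plan is to play the universal lower bound $\cl(t_c^n)\ge 2$ from \cite{BraungardtKotschick} against explicit two-commutator expressions; the point is that for $n=1,2,4$ such expressions can be built in $\M(\Sigma_3^1)$ out of Lemma~\ref{lem:genus=2} by embedding the surface $\Sigma_2^3$ of that lemma into $\Sigma_3^1$ in suitable ways, while $n=10$ needs a separate genus-$2$ argument. For the lower bounds, since $H_1(\M(\Sigma_2^1))=\Z_{10}$ is generated by $[t_c]$ \cite{Korkmaz2002}, the power $t_c^n$ lies in $[\M(\Sigma_2^1),\M(\Sigma_2^1)]$ exactly when $10$ divides $n$; hence for $n\in\{1,2,4\}$ genus $2$ is not admissible and $g_2(n)\ge 3$, while for $n=10$ genus $2$ is the least admissible genus. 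As $\cl(t_c^n)\ge 2$ always holds, it therefore remains to exhibit, for $n\in\{1,2,4\}$, a product of two commutators equal to $t_c^n$ in $\M(\Sigma_3^1)$, and for $n=10$, one equal to $t_c^{10}$ in $\M(\Sigma_2^1)$; in each case it suffices to treat one nonseparating curve, as the mapping class group acts transitively on these.

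For $n=2$ and $n=4$, I would apply Lemma~\ref{lem:genus=2} with $n=1$ and $n=2$ (for both of which $\lfloor (n+3)/2\rfloor=2$), giving $t_{\delta_1}^{n}t_{\delta_2}^{n}=C_1(n)\,C_2(n)$ in $\M(\Sigma_2^3)$. Identifying the boundary circles $\delta_1$ and $\delta_2$ of $\Sigma_2^3$ with one another produces $\Sigma_3^1$, with boundary $\delta_3$, and the resulting homomorphism $\M(\Sigma_2^3)\to\M(\Sigma_3^1)$ carries both $t_{\delta_1}$ and $t_{\delta_2}$ to $t_c$ for a nonseparating curve $c\subset\Sigma_3^1$. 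Pushing the relation forward then writes $t_c^{2n}$ as a product of two commutators, so $\cl(t_c^2)\le 2$ and $\cl(t_c^4)\le 2$ in $\M(\Sigma_3^1)$, and with the lower bound, $g_2(2)=g_2(4)=3$.

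For $n=1$, I would begin from $t_{\delta_1}t_{\delta_2}=C_1C_2$ in $\M(\Sigma_2^3)$ (Lemma~\ref{lem:genus=2} with $n=1$) and embed $\Sigma_2^3$ into $\Sigma_3^1$ so that its complement is a disk capping $\delta_2$ together with a pair of pants attached along $\delta_1$ and $\delta_3$ whose third boundary circle is $\partial\Sigma_3^1$. Capping $\delta_2$ turns $\Sigma_2^3$ into $\Sigma_2^2$, and attaching the pants along two of its boundary circles raises the genus to $3$, so the ambient surface is indeed $\Sigma_3^1$; moreover the two pieces stay joined along $\delta_3$ after one cuts along $\delta_1$, so the image $c$ of $\delta_1$ is nonseparating. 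Under the induced map $\M(\Sigma_2^3)\to\M(\Sigma_3^1)$ one has $t_{\delta_2}\mapsto 1$ (its curve now bounds a disk) and $t_{\delta_1}\mapsto t_c$, so the relation becomes an expression of $t_c$ as a product of two commutators in $\M(\Sigma_3^1)$; hence $\cl(t_c)\le 2$ and $g_2(1)=3$.

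The case $n=10$ is where I expect the main difficulty, since the embedding trick above cannot lower the genus below $3$: inside any genus-$2$ surface every copy of $\Sigma_2^3$ forces $\delta_1$ and $\delta_2$ to become boundary-parallel or null-homotopic, so no nonseparating twist is produced that way, and a two-commutator expression for $t_c^{10}$ in $\M(\Sigma_2^1)$ has to be extracted from the hyperelliptic structure of the genus-$2$ surface instead. The plan there is to use that the product $w=t_{d_1}\cdots t_{d_5}$ along a five-chain satisfies $w^{6}=t_{\delta_1}t_{\delta_2}$ in $\M(\Sigma_2^2)$, so $w$ is periodic of order $6$ modulo the central boundary twists, and then to run a rotation argument in the spirit of the proof of Theorem~\ref{thm:tsuboish} — rewriting a suitable power as a single commutator times a commutator built from this periodic class — together with a chain relation such as $(t_{c_1}t_{c_2}t_{c_3})^{4}=t_{e_1}t_{e_2}$ to collapse down to a single nonseparating twist; this should yield $t_c^{10}$ as a product of two commutators in $\M(\Sigma_2^2)$, hence in $\M(\Sigma_2^1)$ after capping a boundary component. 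Tracking precisely which boundary twists survive these manipulations, and checking that the curve one ends up with is genuinely nonseparating, is the delicate bookkeeping that I anticipate as the real obstacle; granting it, $\cl(t_c^{10})=2$ in $\M(\Sigma_2^1)$ and $g_2(10)=2$.
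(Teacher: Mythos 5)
Your treatment of $n=2$ and $n=4$ is exactly the paper's argument: apply Lemma~\ref{lem:genus=2} with the twist exponent equal to $1$ and $2$ (both giving $\lfloor(n+3)/2\rfloor=2$), then glue a cylinder identifying $\delta_1$ with $\delta_2$ to embed $\Sigma_2^3\hookrightarrow\Sigma_3^1$, so that both boundary twists map to the same nonseparating twist $t_c$ and the relations become $t_c^2=C'_1C'_2$, $t_c^4=D'_1D'_2$. The lower bound via $H_1(\M(\Sigma_2^1))=\Z_{10}$ is also as in the paper. For $n=1$, however, you take a genuinely different route: the paper simply cites \cite{KorkmazOzbagci} for $\cl(t_c)=2$ in $\M(\Sigma_3^1)$, whereas you rederive it from Lemma~\ref{lem:genus=2} by a second embedding of $\Sigma_2^3$ into $\Sigma_3^1$ that caps $\delta_2$ with a disk (killing $t_{\delta_2}$) and sends $\delta_1$ to a nonseparating curve. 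Your Euler characteristic count and the observation that cutting along $\delta_1$ leaves the two pieces joined along $\delta_3$ are correct, so this is a valid self-contained alternative; it buys you independence from \cite{KorkmazOzbagci} at essentially no extra cost, which is a nice observation.

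The $n=10$ case is where the proposal has a genuine gap. You explicitly present only a plan — ``run a rotation argument in the spirit of Theorem~\ref{thm:tsuboish},'' ``this should yield,'' ``granting it'' — and you flag the bookkeeping of which boundary twists survive and whether the terminal curve is nonseparating as an unresolved obstacle. That is exactly the content that has to be established, and without it the claim $\cl(t_c^{10})=2$ in $\M(\Sigma_2^1)$ is not proved. The paper handles this by citing \cite{Korkmaz}, where that equality was established; your sketch is not a substitute for that argument. Note also that the rotation trick in Theorem~\ref{thm:tsuboish} crucially increases the genus of the ambient surface (from $g$ to $gh$), so running it ``in the spirit of'' that theorem while staying in genus $2$ is not a routine adaptation — you would at minimum need to explain how to use the hyperelliptic periodicity in place of the rotation $R$, and verify that the resulting product of two commutators lands on a power of a twist along a \emph{nonseparating} curve rather than a boundary or separating one. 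To repair the proposal, either carry this computation through completely or cite \cite{Korkmaz} as the paper does.
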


\begin{proof}
It was shown by Ozbagci and the second author in \cite{KorkmazOzbagci} that $\cl(t_c)=2$ when $g =3$, and by the second author in \cite{Korkmaz} that $\cl(t_c^{10})=2$ when $g = 2$. 

For $t_c^2$ and $t_c^4$ we proceed as follows. 
Let $\{\delta_1,\delta_2,\delta_3\}$ denote the three components of $\partial \Sigma_2^3$.  By Lemma~\ref{lem:genus=2},  we have $t_{\delta_1} t_{\delta_2} = C_1 C_2$ as well as $t_{\delta_1}^2 t_{\delta_2}^2 = D_1 D_2$, for some commutators $C_j, D_j$ in $\M(\Sigma_2^3)$.  Consider an embedding $\Sigma_2^3 \hookrightarrow \Sigma_3^1$ obtained by attaching a cylinder to the two boundary components $\delta_1$ and $\delta_2$ of $\Sigma_2^3$.  The image of $\delta_1$ and $\delta_2$ are  isotopic to the same non-separating curve $c$ in $\Sigma_3^1$, whereas the remaining boundary component maps to the unique boundary component of $\Sigma_3^1$.  Using the homomorphism $\M(\Sigma_2^3) \to \M(\Sigma_3^1)$ induced by this embedding,  we thus derive two new expressions  of the form $t_c^2=C'_1 C'_2$ and $t_c^4 = D'_1 D'_2$ in $\M(\Sigma_3^1)$, where $C'_j,  D'_j$ are commutators.   

None of the $t_c^n$ considered above are in the commutator subgroup of $\M(\Sigma_g^1)$ for smaller $g$, so we get the claimed values for $g_2(n)$.
\end{proof}

\begin{remark}
Corollary~\ref{cor:cl} provides a complete (meaning,  for all $g \geq 3$) answer to Problem 2.13(b) in Kirby's  List \cite{Kirby} for $n=1, 2$ and $4$.  Using similar arguments,  we can also conclude that $\textrm{cl}(t_c^3) \leq 3$ for all $g \geq 3$ and is equal to $2$ for $g \geq 5$. 
\end{remark}

\begin{remark} 
  If $t_c^n$ is expressed as a product of two commutators in $\M(\Sigma_g^1)$, 
  we get a genus--$g$ Lefschetz fibration over $\Sigma_2$ with $n$ nodes 
  clustered all in one fiber. Then~\cite[Theorem~8] {BraungardtKotschick} dictates 
  that $g\geq \frac{n+6}{18}$. Thus, we have $g_2(n) \geq \frac{n+6}{18}$ for every $n$. 
\end{remark}

\medskip
Lastly,  we look at the stable commutator length of the boundary multitwist \newline $\Delta = t_{\delta_1} 
 t_{\delta_2} \cdots t_{\delta_b}$ in  $\M(\Sigma_g^b)$,  for $g \geq 2$,  $b>0$.  Note that $\Delta$ is in the commutator subgroup of $\M(\Sigma_g^b)$, even when $g =2$. 

In~\cite{BaykurKorkmazMonden},  it was shown by Monden and the authors of 
this article that when $b=1$,  $\cl(\Delta^n)=\lfloor (n+3)/ 2 \rfloor$ 
for any positive $n$, so that $\scl(\Delta)=1/2$.  
The main gain in the case of a boundary multitwist is the sharp lower bounds we get,  
which can be interpreted as a manifestation of 
the Milnor-Wood inequality  \cite{BaykurFlatBundles, HamenstadtMW}.  
As the same lower bound carries over to $\M(\Sigma_g^b)$ for any $b>1$,   
precise calculations of $\cl(\Delta^n)$ and $\scl(\Delta)$ are possible 
any time we are able to realize the lower bound.  And when we cap the extra boundary component,  Lemma~\ref{lem:genus=2} does precisely this for $b=2$,  as we can now 
express $\Delta^n=t_{\delta_1}^n t_{\delta_2}^n$ as a product of $\lfloor (n+3)/2 \rfloor$ 
commutators in $\M(\Sigma_2^2)$.  In summary,  when $b=2$, we also have 
$\cl(\Delta^n)=\lfloor (n+3)/2 \rfloor$ and $\scl(\Delta)=1/2$ in $\M(\Sigma_g^2)$.  
We record these calculations as well:

\begin{cor}\label{cor:scl}
Let $\Delta$ be the boundary multitwist in $\M(\Sigma_g^b)$.  For any $g \geq 2$ and $b=1, 2$,  we have $\scl(\Delta)=1/2$.
\end{cor}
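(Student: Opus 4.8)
The plan is to prove the two bounds $\scl(\Delta)\le 1/2$ and $\scl(\Delta)\ge 1/2$ separately in the case $b=2$, reducing the lower bound to the already-established case $b=1$ from \cite{BaykurKorkmazMonden}; the case $b=1$ of the corollary is exactly the statement recalled just above. Throughout I will use that $\Delta$ lies in the commutator subgroup (so $\scl$ is finite) and that $n\mapsto\cl(\Delta^n)$ is subadditive, so that $\scl(\Delta)=\lim_n \cl(\Delta^n)/n$ makes sense.

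For the upper bound I would start from the relator of Lemma~\ref{lem:genus=2}: in $\M(\Sigma_2^3)$ one has $t_{\delta_1}^{n}t_{\delta_2}^{n}=\prod_{i=1}^{N}C_i(n)$ with $N=\lfloor (n+3)/2\rfloor$ commutators. Capping off the third boundary component $\delta_3$ with a disk induces a homomorphism $\M(\Sigma_2^3)\to\M(\Sigma_2^2)$ under which $t_{\delta_1}$ and $t_{\delta_2}$ remain the two boundary twists, so $\Delta^{n}$ becomes a product of $\lfloor (n+3)/2\rfloor$ commutators in $\M(\Sigma_2^2)$. Since $\Sigma_2^2$ embeds into $\Sigma_g^2$ for every $g\ge 2$ with its two boundary curves preserved, the induced map on mapping class groups carries this expression into $\M(\Sigma_g^2)$, so $\cl(\Delta^{n})\le\lfloor (n+3)/2\rfloor$ there. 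Dividing by $n$ and letting $n\to\infty$ gives $\scl(\Delta)\le 1/2$.

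For the lower bound I would invoke monotonicity of $\scl$ under group homomorphisms: if $\varphi\colon G\to H$ is a homomorphism then $\cl_H(\varphi(x^{n}))\le\cl_G(x^{n})$ for all $n$, hence $\scl_H(\varphi(x))\le\scl_G(x)$. Capping off $\delta_2$ with a disk gives a homomorphism $\M(\Sigma_g^2)\to\M(\Sigma_g^1)$ sending $t_{\delta_2}\mapsto 1$ and $t_{\delta_1}$ to the boundary Dehn twist of $\Sigma_g^1$ (here one checks that $\delta_1$ stays parallel to the surviving boundary component, so its image really is the standard boundary twist), so $\Delta=t_{\delta_1}t_{\delta_2}\mapsto t_{\delta_1}$. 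Therefore $\scl_{\M(\Sigma_g^2)}(\Delta)\ge\scl_{\M(\Sigma_g^1)}(t_{\delta_1})=1/2$ by \cite{BaykurKorkmazMonden}. Combining the two bounds yields $\scl(\Delta)=1/2$ in $\M(\Sigma_g^2)$ for all $g\ge 2$, completing the proof together with the $b=1$ case.

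The steps above only use capping and embedding homomorphisms together with the combinatorial relator of Lemma~\ref{lem:genus=2}, so the genuinely substantive content is invisible in this argument: it is hidden in the sharp estimate $\scl(t_{\delta_1})\ge 1/2$ in $\M(\Sigma_g^1)$ from \cite{BaykurKorkmazMonden}, which is a Milnor--Wood type inequality obtained from an explicit homogeneous quasimorphism via Bavard duality. The only mild point requiring care is keeping track of which boundary component each $\delta_i$ is parallel to, so that one correctly identifies the images of the boundary twists under the capping maps; everything else is bookkeeping.
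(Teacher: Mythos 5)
Your proof is correct and follows essentially the same route as the paper: for $b=1$ quote \cite{BaykurKorkmazMonden}, and for $b=2$ combine the upper bound from Lemma~\ref{lem:genus=2} (capping $\delta_3$ and embedding $\Sigma_2^2\hookrightarrow\Sigma_g^2$) with a lower bound inherited from the $b=1$ case. The one place you are slightly more explicit than the paper is the lower bound for $b=2$: you pass through the capping homomorphism $\M(\Sigma_g^2)\to\M(\Sigma_g^1)$, $\Delta\mapsto t_{\delta_1}$, and use monotonicity of $\scl$ under homomorphisms, whereas the paper asserts directly that the Milnor--Wood-type lower bound of \cite{BaykurKorkmazMonden} ``carries over'' to $b>1$; these are two equally valid ways of saying the same thing, and your version has the virtue of not requiring one to re-derive a Milnor--Wood estimate in the multi-boundary setting.
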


There is a little more we can say here:  Let us say that a sequence $(c_1, c_2,\ldots, c_{k})$ of curves on 
 a surface is a \textit{chain} if $c_i$ and $c_{j}$ intersect transversely at one point for $j=i\pm 1$ and 
 are disjoint otherwise.  For a chain $(c_1,  c_2,\ldots, c_{2g})$ on $\Sigma_g^1$, 
 and a chain $(c_1,  c_2,\ldots, c_{2g+1})$ on $\Sigma_g^2$, consider the elements 
\[  
S:=t_{c_1}t_{c_2}\cdots t_{c_{2g}} \in \M(\Sigma_g^1)  \ \ \text{ and } \ 
T:=t_{c_1}t_{c_2}\cdots t_{c_{2g+1}} \in \M(\Sigma_g^2).
\]
 It is well-known that  $S^{4g+2}=\Delta$ and $T^{2g+2}=\Delta$; see e.g.\,\cite{FarbMargalit}. Since $\scl$ is homogeneous and since $\scl(\Delta)=1/2$,  we  get $\scl(S)=1/(4(2g+1))$ in $\M(\Sigma_g^1)$ and $\scl(T)=1/(4(g+1))$ in $\M(\Sigma_g^2)$, $g \geq 2$.

\medskip
\section{New surface bundles with positive signatures} \label{sec:final}

This final section is dedicated to the proof of our main theorem.

\noindent \underline{\textit{Proof of Theorem~\ref{main}}}:
 We begin with an elementary observation (cf. \cite{EKKOS}): Say  we have a monodromy factorization  
 \[ C_1 \cdots C_h=1 \ \ \ \text{ in }  \M(\Sigma_g^1)
 \]
 for a $\Sigma_g$--bundle over $\Sigma_h$ with a section of self-intersection zero, where $\{C_i\}$ 
 are  commutators.  Call this surface bundle $(X,f)$.  Then for any given $g' \geq g$ and $h' \geq h$, 
 we can derive another monodromy factorization
 \[ C'_1 \cdots C'_h C'_{h+1} \cdots C_{h'} =1 \ \ \ \text{ in }  \M(\Sigma_{g'}^1)
 \]
 for a $\Sigma_{g'}$--bundle over $\Sigma_{h'}$ with a section of self-intersection zero,  by using 
 any embedding $\Sigma_g^1 \hookrightarrow \Sigma_{g'}^1$ and concatenating 
 the trivial commutators $C'_{h+1}, \ldots, C'_h$ to the product of the commutators $C'_i$ 
 which are the  images of the original commutators $C_i$.  Since embedding a relation or 
 adding trivial commutators do not change the signature,  for the new surface bundle  $(X',f')$ we obtained,  we have $\sigma(X')=\sigma(X)$.\footnote{
 This  construction is good enough to address the mere existence of positive signature surface bundles with prescribed fiber and base genera.  Otherwise,  to generate surface bundles with larger signatures relative to their topology,  it is certainly better to use extensions with nontrivial surface bundles with positive signatures. }

It should be noted that  in order to increase the fiber genus here,  we needed the initial surface bundle $(X,f)$  to have a section of self-intersection zero,  or equivalently,  a commutator identity supported on $\Sigma_g^1$ as opposed to $\Sigma_g$.  Recall that this is also a necessary condition to invoke Theorem~\ref{thm:tsuboish}.

With the above in mind,  we will  generate the promised surface bundles with positive signatures from a few base examples.

\smallskip
\noindent \underline{\textit{$h \geq 5$ and $g \geq 3$\,}}:  For the curves $a_i, \delta_j$ on $\Sigma_1^4$ as shown on left in Figure~\ref{fig:(3,5)proof},  the \emph{four-holed torus relation} in \cite{KorkmazOzbagci2} gives us a relator 
 \[
	t_{\delta_1}^{-1}t_{\delta_2}^{-1}t_{\delta_3}^{-1}t_{\delta_4}^{-1}
	(t_{c_0}t_{c_1}t_{c_3}t_{c_0}t_{c_2}t_{c_4})^2=1
\ \ \ \ \text{in } \M(\Sigma_1^4) \, 
 \]
with $\sigma= -4$.  We embed $\Sigma_1^4 \hookrightarrow \Sigma_3^1$ so that the images of the boundary components $\delta_i$ of $\Sigma_1^4$ are as shown on the right hand side of Figure~\ref{fig:(3,5)proof},  denoted by the same letters.  As  $\delta_2$ and $\delta_4$ become isotopic in $\Sigma_3^1$, we represent them by the same curve after the embedding.  The image of each $c_i$ is labeled as $a_i$.

\medskip
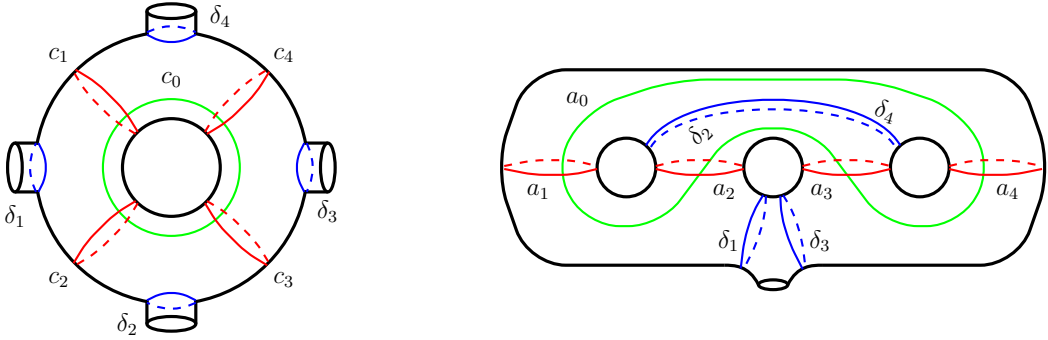
\begin{figure}[h]
\begin{tikzpicture}[scale=1]
\begin{scope}  [xshift=0cm, yshift=0cm, scale=0.65] 
   \draw[very thick, xshift=0cm] (0,0) circle [radius=1cm];
    \draw[very thick ] (-2.75,0.467) arc (170:99.7:2.8);
  \draw[very thick ] (2.75,0.47) arc (10:80.3:2.8);
  \draw[very thick, rotate=90 ] (-2.75,0.467) arc (170:99.7:2.8);
 \draw[very thick, rotate=-90 ] (2.75,0.47) arc (10:80.3:2.8);
  \draw[very thick, rounded corners=10pt] (-0.5, 2.73)--(-0.5 ,3.2);
   \draw[very thick, rounded corners=10pt] (0.5, 2.73)--(0.5 ,3.2);
   \draw[very thick, rounded corners=10pt, rotate=90] (-0.5, 2.73)--(-0.5 ,3.2);
   \draw[very thick, rounded corners=10pt, rotate=90] (0.5, 2.73)--(0.5 ,3.2);
   \draw[very thick, rounded corners=10pt, rotate=180] (-0.5, 2.73)--(-0.5 ,3.2);
   \draw[very thick, rounded corners=10pt, rotate=180] (0.5, 2.73)--(0.5 ,3.2);
   \draw[very thick, rounded corners=10pt, rotate=270] (-0.5, 2.73)--(-0.5 ,3.2);
   \draw[very thick, rounded corners=10pt, rotate=270] (0.5, 2.73)--(0.5 ,3.2);
 \draw[very thick, rotate=0] (0,3.2) ellipse (0.5 and 0.15);
 \draw[very thick, rotate=90] (0,3.2) ellipse (0.5 and 0.15);
 \draw[very thick, rotate=180] (0,3.2) ellipse (0.5 and 0.15);
 \draw[very thick, rotate=270] (0,3.2) ellipse (0.5 and 0.15);
  \draw[thick, blue, rounded corners=3pt, yshift=-8, xshift=0cm, rotate=0] (-0.5,3) -- (-0.2,2.85)--(0.2,2.85) --(0.5,3);
  \draw[thick,  blue, dashed, rounded corners=3pt, yshift=-8, xshift=0cm, rotate=0] (-0.5,3.05) -- (-0.2,3.17)--(0.2,3.17) --(0.5,3.05);
  \draw[thick, blue, rounded corners=3pt, xshift=8, rotate=90] (-0.5,3) -- (-0.2,2.85)--(0.2,2.85) --(0.5,3);
  \draw[thick,  blue, dashed, rounded corners=3pt, xshift=8, rotate=90] (-0.5,3.05) -- (-0.2,3.17)--(0.2,3.17) --(0.5,3.05);
  \draw[thick, blue, rounded corners=3pt, yshift=8, xshift=0cm, rotate=180] (-0.5,3) -- (-0.2,2.85)--(0.2,2.85) --(0.5,3);
  \draw[thick,  blue, dashed, rounded corners=3pt, yshift=8, xshift=0cm, rotate=180] (-0.5,3.05) -- (-0.2,3.17)--(0.2,3.17) --(0.5,3.05);
 \draw[thick, blue, rounded corners=3pt, xshift=-8, rotate=270] (-0.5,3) -- (-0.2,2.85)--(0.2,2.85) --(0.5,3);
  \draw[thick,  blue, dashed, rounded corners=3pt, xshift=-8, rotate=270] (-0.5,3.05) -- (-0.2,3.17)--(0.2,3.17) --(0.5,3.05);
  
  \draw[thick, green, xshift=0cm] (0,0) circle [radius=1.4cm];
   \draw[thick, red, rounded corners=4pt, xshift=0cm, rotate=45] (1,-0.03) -- (1.5,-0.15)--(2.3,-0.15) --(2.8,-0.03);
   \draw[thick,  red, dashed, rounded corners=4pt, xshift=0cm, rotate=45] (1,0.03) -- (1.5,0.15)--(2.3,0.15) --(2.8,0.03);
    \draw[thick, red, rounded corners=4pt, xshift=0cm, rotate=135] (1,-0.03) -- (1.5,-0.15)--(2.3,-0.15) --(2.8,-0.03);
   \draw[thick,  red, dashed, rounded corners=4pt, xshift=0cm, rotate=135] (1,0.03) -- (1.5,0.15)--(2.3,0.15) --(2.8,0.03);
     \draw[thick, red, rounded corners=4pt, xshift=0cm, rotate=-45] (1,-0.03) -- (1.5,-0.15)--(2.3,-0.15) --(2.8,-0.03);
   \draw[thick,  red, dashed, rounded corners=4pt, xshift=0cm, rotate=-45] (1,0.03) -- (1.5,0.15)--(2.3,0.15) --(2.8,0.03);
    \draw[thick, red, rounded corners=4pt, xshift=0cm, rotate=-135] (1,-0.03) -- (1.5,-0.15)--(2.3,-0.15) --(2.8,-0.03);
   \draw[thick,  red, dashed, rounded corners=4pt, xshift=0cm, rotate=-135] (1,0.03) -- (1.5,0.15)--(2.3,0.15) --(2.8,0.03);
 \node[scale=0.8] at (0,1.8) {$c_0$};
 \node[scale=0.8] at (-2.3,2.3) {$c_1$};
 \node[scale=0.8] at (-2.3,-2.3) {$c_2$};
 \node[scale=0.8] at (2.3,2.3) {$c_4$};
 \node[scale=0.8] at (2.3,-2.3) {$c_3$};
  \node[scale=0.8] at (-3.2,-1) {$\delta_1$};
 \node[scale=0.8] at (-0.9,-3.2) {$\delta_2$};
 \node[scale=0.8] at (3.2,-0.9) {$\delta_3$};
 \node[scale=0.8] at (1,3.1) {$\delta_4$};
\end{scope}


  \begin{scope} [xshift=8cm, yshift=0cm, scale=0.65]
  	\draw[very thick, rounded corners=14pt] (-1,-2)--(-5,-2) -- (-5.7,0)--(-5, 2) 
		-- (5,2)--(5.7,0) -- (5,-2)--(1,-2);
  	\draw[very thick, rounded corners=5pt] (-1,-2)..controls (-0.8, -2) and (-0.5,-2) .. (-0.3,-2.4) ;
  	\draw[very thick, rounded corners=5pt] (1,-2)..controls (0.8, -2) and (0.5,-2) .. (0.3,-2.4) ;
  	\draw[very thick] (0,-2.4) ellipse (0.3 cm and 0.1 cm);

  \draw[thick, green, rounded corners=12pt] (-0.8,0.8)-- (-2.3,-1.2)--(-3.8,-1.2)-- (-4.5,0)--(-3.8,1.2)--
  (-2,1.8)--(2,1.8)--(3.8,1.2)--(4.5,0)--(3.8, -1.2)-- (2.3,-1.2)--(0.8,0.8)--cycle;

 \draw[very thick, xshift=-3cm] (0,0) circle [radius=0.6cm];
 \draw[very thick, xshift=0cm] (0,0) circle [radius=0.6cm];
 \draw[very thick, xshift=3cm] (0,0) circle [radius=0.6cm];
  \draw[thick, red, rounded corners=4pt, xshift=-4.5cm] (-1,-0.03) -- (-0.4,-0.15)--(0.4,-0.15) --(0.9,-0.03);
 \draw[thick,  red, dashed, rounded corners=4pt, xshift=-4.5cm] (-1,0.03) -- (-0.4,0.15)--(0.4,0.15) --(0.9,0.03);
  \draw[thick, red, rounded corners=4pt, xshift=-1.5cm] (-0.9,-0.03) -- (-0.4,-0.15)--(0.4,-0.15) --(0.9,-0.03);
 \draw[thick,  red, dashed, rounded corners=4pt, xshift=-1.5cm] (-0.9,0.03) -- (-0.4,0.15)--(0.4,0.15) --(0.9,0.03);
  \draw[thick, red, rounded corners=4pt, xshift=1.5cm] (-0.9,-0.03) -- (-0.4,-0.15)--(0.4,-0.15) --(0.9,-0.03);
 \draw[thick,  red, dashed, rounded corners=4pt, xshift=1.5cm] (-0.9,0.03) -- (-0.4,0.15)--(0.4,0.15) --(0.9,0.03);
  \draw[thick, red, rounded corners=4pt, xshift=4.5cm] (-0.9,-0.03) -- (-0.4,-0.15)--(0.4,-0.15) --(1,-0.03);
 \draw[thick,  red, dashed, rounded corners=4pt, xshift=4.5cm] (-0.9,0.03) -- (-0.4,0.15)--(0.4,0.15) --(1,0.03);
\draw[thick,  blue, rounded corners=4pt, xshift=0cm, rotate=-17 ] (-0.03,-0.6) -- (-0.13, -1)--(-0.13,-1.6) --(-0.03,-2.17);
\draw[thick,  blue, dashed, rounded corners=4pt, xshift=0cm, rotate=-17] (0.03,-0.6) -- (0.13, -1)--(0.13,-1.6) --(0.03,-2.17);
\draw[thick,  blue, rounded corners=4pt, xshift=0cm, rotate=17 ] (-0.03,-0.6) -- (-0.13, -1)--(-0.13,-1.6) --(-0.03,-2.17);
\draw[thick,  blue, dashed, rounded corners=4pt, xshift=0cm, rotate=17] (0.03,-0.6) -- (0.13, -1)--(0.13,-1.6) --(0.03,-2.17);
  \draw[thick, blue, rounded corners=8pt] (-2.6,0.45)..controls (-1.9, 1.7) and (1.9,1.7) ..(2.6,0.45);
  \draw[thick, blue, dashed, rounded corners=8pt] (-2.5,0.42)..controls (-1.9, 1.45) and (1.9,1.45) ..(2.5,0.42);
\node[scale=0.8] at (-4,1.4) {$a_0$};
\node[scale=0.8] at (-4.8,-0.5) {$a_1$};
\node[scale=0.8] at (-1,-0.5) {$a_2$};
\node[scale=0.8] at (1,-0.5) {$a_3$};
\node[scale=0.8] at (4.8,-0.5) {$a_4$};
\node[scale=0.8] at (-0.91,-1.5) {$\delta_1$};
\node[scale=0.8] at (0.96,-1.5) {$\delta_3$};
\node[scale=0.8, rotate=45] at (-1.5,0.7) {$\delta_2$};
\node[scale=0.8, rotate=-15] at (2.3,1.15) {$\delta_4$};
\end{scope}
\end{tikzpicture}
\caption{$4$--holed torus curves  and the embedding $\Sigma_1^4 \hookrightarrow \Sigma_3^1$} \label{fig:(3,5)proof}
\end{figure}
\smallskip

Thus,  we have the following relator in $\M(\Sigma_3^1)$ with $\sigma=-4$:
\[
 1 =  t_{\delta_1}^{-1}  t_{\delta_2}^{-1}  t_{ \delta_3 }^{-1}  t_{\delta_4}^{-1}
	 (t_{a_0}t_{a_1}t_{a_3}t_{a_0}t_{a_2}t_{a_4})^2 =  t_{a_0}  t_{z_1} t_{z_2}t_{z_3}  (t_{a_1}t_{a_2}t_{a_3}t_{a_4})^2 \,  ,
\]
 where $z_1=t_{a_1}t_{a_3}(a_0)$, $z_2=t_{a_1}t_{a_2}t_{a_3}t_{a_4}(a_0)$ and 
 $z_3=  t_{a_1}t_{a_1}t_{a_2}t_{a_3}t_{a_3}t_{a_4}(a_0)$.  Using \eqref{1comm} and  Lemma~\ref{lem:g=3de3comm},
 we can change this into   
 \begin{eqnarray*}
 1 &=& (t_{\delta_1}^{-1} t_{a_0} \, t_{z_1}t_{\delta_2}^{-1}) (t_{\delta_3}^{-1}t_{z_2} \, t_{z_3}t_{\delta_4}^{-1})   (t_{a_1}t_{a_2}t_{a_3}t_{a_4})^2\\
  &=& C_4 C_5(t_{a_1}t_{a_2}t_{a_3}t_{a_4})^2 \cdot (t_{a_1}t_{a_2}t_{a_3}t_{a_4})^{-2}C_1C_2C_3  \\
 &=& C_4C_5\cdot C_1 C_2 C_3 \, ,
 \end{eqnarray*}
or equivalently,  into the relator 
\[  
C_1 C_2 C_3 C_4 C_5 =1 \ \ \text{ in } \ \M(\Sigma_3^1) \, ,
\]
with signature  $\sigma=-4+8=4$.  This prescribes a $\Sigma_3$--bundle over $\Sigma_5$  with $\sigma=4$ and a section of self-intersection zero.  In turn, we get $\Sigma_g$--bundle  over $\Sigma_h$ with $\sigma=4$ (and a section of self-intersection zero) for any $g \geq 3$ and $h \geq 5$.

\bigskip
 \noindent \underline{\textit{$h=4$ and $g \geq 5$\,}}: Take an embedding $\Sigma_1^4 \hookrightarrow \Sigma_5^1$ so that the $c_i$ curves of the $4$--holed torus relation (c.f. Figure~\ref{fig:(3,5)proof}) are as shown on the left hand-side of Figure~\ref{fig:(5,4)icin}, whereas the boundary curves $\delta_j$ are mapped to the separating curves $d_j$.

\medskip
\begin{figure}[h]
\begin{tikzpicture}[scale=0.88]
\begin{scope} [xshift=0cm, yshift=0cm, scale=0.74]
   \draw[very thick, xshift=0cm] (0,0) circle [radius=1cm];
   \draw[very thick ] (2.94, 0.98) arc (18.4:71.5:3.1);
   \draw[very thick, rotate=90 ] (2.94, 0.98) arc (18.4:71.5:3.1);
   \draw[very thick, rotate=180 ] (2.94, 0.98) arc (18.4:71.5:3.1);
   \draw[very thick, rotate=-90 ] (2.94, 0.98) arc (18.4:71.5:3.1);
    \draw[thick, red, xshift=0cm] (0,0) circle [radius=1.3cm];
    \draw[thick, red, rounded corners=3pt, rotate=0] (-1.02,2.93)..controls (-0.5,2.8) and (0.5,2.8) ..(1.02,2.93);
   \draw[thick, red, dashed, rounded corners=3pt, rotate=0] (-1,2.97).. controls (-0.5,3) and (0.5,3) ..(1.,2.97);
  \draw[very thick] (-1, 2.93)--(-1 ,3.42);
  \draw[very thick] (1, 2.93)--(1 ,3.42); 
  \draw[very thick] (0.695,3.4) ellipse (0.305 and 0.12);
  \draw[very thick] (-0.695,3.4) ellipse (0.305 and 0.12);
  \draw[very thick ] (0.39, 3.4) arc (-14:-170:0.4);
  \draw[very thick, gray] (0,4.6) ellipse (0.28 and 0.07);
  \draw[very thick, gray] (1,3.8)..controls (0.98, 4.3) and (0.5,4.55)..(0.28,4.59);
  \draw[very thick, gray] (-1,3.8)..controls (-0.98, 4.3) and (-0.5,4.55)..(-0.28,4.59);
 \draw[very thick, gray] (-0.385 ,3.8)..controls (-0.3,4.2) and (0.3,4.2)..(0.385,3.8);
\draw[very thick, gray] (0.384 ,3.815)..controls (0.394,3.63) and (0.99,3.63)..(1,3.82);
\draw[very thick, dashed, gray] (0.384 ,3.815)..controls (0.394,4) and (0.99,4)..(1,3.82);
\draw[very thick, gray] (-0.384 ,3.815)..controls (-0.394,3.63) and (-0.99,3.63)..(-1,3.82);
\draw[very thick, dashed, gray] (-0.384 ,3.815)..controls (-0.394,4) and (-0.99,4)..(-1,3.82);
\draw[very thick, gray, rotate=90 ] (-1 ,3.8)..controls (-0.9,4.9) and (0.9,4.9)..(1,3.8);
 \draw[very thick, gray, rotate=90] (-0.385 ,3.8)..controls (-0.3,4.2) and (0.3,4.2)..(0.385,3.8);
\draw[very thick, gray, rotate=90] (0.384 ,3.815)..controls (0.394,3.63) and (0.99,3.63)..(1,3.82);
\draw[very thick, dashed, gray, rotate=90] (0.384 ,3.815)..controls (0.394,4) and (0.99,4)..(1,3.82);
\draw[very thick, gray, rotate=90] (-0.384 ,3.815)..controls (-0.394,3.63) and (-0.99,3.63)..(-1,3.82);
\draw[very thick, dashed, gray, rotate=90] (-0.384 ,3.815)..controls (-0.394,4) and (-0.99,4)..(-1,3.82);
\draw[very thick, gray, rotate=180 ] (-1 ,3.8)..controls (-0.9,4.9) and (0.9,4.9)..(1,3.8);
 \draw[very thick, gray, rotate=180] (-0.385 ,3.8)..controls (-0.3,4.2) and (0.3,4.2)..(0.385,3.8);
\draw[very thick, gray, rotate=180] (0.384 ,3.815)..controls (0.394,3.63) and (0.99,3.63)..(1,3.82);
\draw[very thick, dashed, gray, rotate=180] (0.384 ,3.815)..controls (0.394,4) and (0.99,4)..(1,3.82);
\draw[very thick, gray, rotate=180] (-0.384 ,3.815)..controls (-0.394,3.63) and (-0.99,3.63)..(-1,3.82);
\draw[very thick, dashed, gray, rotate=180] (-0.384 ,3.815)..controls (-0.394,4) and (-0.99,4)..(-1,3.82);
\draw[very thick, gray, rotate=-90 ] (-1 ,3.8)..controls (-0.9,4.9) and (0.9,4.9)..(1,3.8);
 \draw[very thick, gray, rotate=-90] (-0.385 ,3.8)..controls (-0.3,4.2) and (0.3,4.2)..(0.385,3.8);
\draw[very thick, gray, rotate=-90] (0.384 ,3.815)..controls (0.394,3.63) and (0.99,3.63)..(1,3.82);
\draw[very thick, dashed, gray, rotate=-90] (0.384 ,3.815)..controls (0.394,4) and (0.99,4)..(1,3.82);
\draw[very thick, gray, rotate=-90] (-0.384 ,3.815)..controls (-0.394,3.63) and (-0.99,3.63)..(-1,3.82);
\draw[very thick, dashed, gray, rotate=-90] (-0.384 ,3.815)..controls (-0.394,4) and (-0.99,4)..(-1,3.82);

   \draw[thick, red, rounded corners=3pt, rotate=90] (-1.02,2.93)..controls (-0.5,2.8) and (0.5,2.8) ..(1.02,2.93);
   \draw[thick, red, dashed, rounded corners=3pt, rotate=90] (-1,2.97).. controls (-0.5,3) and (0.5,3) ..(1.,2.97);
   \draw[very thick, rotate=90 ] (-1, 2.93)--(-1 ,3.42);
   \draw[very thick, rotate=90 ] (1, 2.93)--(1 ,3.42); 
   \draw[very thick, rotate=90] (0.695,3.4) ellipse (0.305 and 0.12);
  \draw[very thick, rotate=90] (-0.695,3.4) ellipse (0.305 and 0.12);
  \draw[very thick, rotate=90  ] (0.39, 3.4) arc (-14:-170:0.4);

    \draw[thick, red, rounded corners=3pt, rotate=180] (-1.02,2.93)..controls (-0.5,2.8) and (0.5,2.8) ..(1.02,2.93);
   \draw[thick, red, dashed, rounded corners=3pt, rotate=180] (-1,2.97).. controls (-0.5,3) and (0.5,3) ..(1.,2.97);
   \draw[very thick, rotate=180] (-1, 2.93)--(-1 ,3.42);
   \draw[very thick,  rotate=180] (1, 2.93)--(1 ,3.42); 
   \draw[very thick, rotate=180] (0.695,3.4) ellipse (0.305 and 0.12);
  \draw[very thick, rotate=180] (-0.695,3.4) ellipse (0.305 and 0.12);
  \draw[very thick, rotate=180 ] (0.39, 3.4) arc (-14:-170:0.4);
    \draw[thick, red, rounded corners=3pt, rotate=270] (-1.02,2.93)..controls (-0.5,2.8) and (0.5,2.8) ..(1.02,2.93);
   \draw[thick, red, dashed, rounded corners=3pt, rotate=270] (-1,2.97).. controls (-0.5,3) and (0.5,3) ..(1.,2.97);
   \draw[very thick, rotate=-90 ] (-1, 2.93)--(-1 ,3.42);
   \draw[very thick, rotate=-90 ] (1, 2.93)--(1 ,3.42); 
   \draw[very thick, rotate=-90] (0.695,3.4) ellipse (0.305 and 0.12);
  \draw[very thick, rotate=-90] (-0.695,3.4) ellipse (0.305 and 0.12);
  \draw[very thick, rotate=-90] (0.39, 3.4) arc (-14:-170:0.4);
 \draw[thick, red, rounded corners=3pt, rotate=45] (1,0).. controls (1.6, -0.1) and (2.5,-0.1) ..(3.12,0);
 \draw[thick, red,  dashed, rounded corners=3pt, rotate=45] (1,0.02).. controls (1.6, 0.1) and (2.5,0.1) ..(3.12,0.02);
 \draw[thick, red, rounded corners=3pt, rotate=135] (1,0).. controls (1.6, -0.1) and (2.5,-0.1) ..(3.12,0);
 \draw[thick, red,  dashed, rounded corners=3pt, rotate=135] (1,0.02).. controls (1.6, 0.1) and (2.5,0.1) ..(3.12,0.02);
 \draw[thick, red, rounded corners=3pt, rotate=225] (1,0).. controls (1.6, -0.1) and (2.5,-0.1) ..(3.12,0);
 \draw[thick, red,  dashed, rounded corners=3pt, rotate=225] (1,0.02).. controls (1.6, 0.1) and (2.5,0.1) ..(3.12,0.02);
 \draw[thick, red, rounded corners=3pt, rotate=315] (1,0).. controls (1.6, -0.1) and (2.5,-0.1) ..(3.12,0);
 \draw[thick, red,  dashed, rounded corners=3pt, rotate=315] (1,0.02).. controls (1.6, 0.1) and (2.5,0.1) ..(3.12,0.02);
 \node[scale=0.8] at (0,1.6) {$c_0$};
 \node[scale=0.8] at (-2.4,2.5) {$c_1$};
 \node[scale=0.8] at (-2.4,-2.5) {$c_2$};
 \node[scale=0.8] at (2.4,-2.5) {$c_3$};
 \node[scale=0.8] at (2.4,2.5) {$c_4$};
 \node[scale=0.8] at (-3.6,1.4) {$\delta_1$};
 \node[scale=0.8] at (-3.6,-1.4) {$\delta_2$};
 \node[scale=0.8] at (-1.4, -3.6) {$\delta_3$};
 \node[scale=0.8] at (1.4, -3.6) {$\delta_4$};
 \node[scale=0.8] at (3.6,-1.4) {$\delta_5$};
 \node[scale=0.8] at (3.6,1.4) {$\delta_6$};
 \node[scale=0.8] at (1.4, 3.6) {$\delta_7$};
 \node[scale=0.8] at (-1.4, 3.6) {$\delta_8$};
 \node[scale=0.8] at (-2.5,0) {$d_1$};
 \node[scale=0.8] at (0,-2.5) {$d_2$};
 \node[scale=0.8] at (2.5,0) {$d_3$};
 \node[scale=0.8] at (0,2.5) {$d_4$};

\end{scope}

\begin{scope} [xshift=8.1cm, yshift=0cm, scale=0.74]
   \draw[very thick, xshift=0cm] (0,0) circle [radius=1cm];
   \draw[very thick ] (2.94, 0.98) arc (18.4:71.5:3.1);
   \draw[very thick, rotate=90 ] (2.94, 0.98) arc (18.4:71.5:3.1);
   \draw[very thick, rotate=180 ] (2.94, 0.98) arc (18.4:71.5:3.1);
   \draw[very thick, rotate=-90 ] (2.94, 0.98) arc (18.4:71.5:3.1);
     \draw[very thick] (-1, 2.93)--(-1 ,3.42);
   \draw[very thick] (1, 2.93)--(1 ,3.42); 
  \draw[very thick] (0.695,3.4) ellipse (0.305 and 0.12);
  \draw[very thick] (-0.695,3.4) ellipse (0.305 and 0.12);
  \draw[very thick ] (0.39, 3.4) arc (-14:-170:0.4);
   \draw[very thick, rotate=90 ] (-1, 2.93)--(-1 ,3.42);
   \draw[very thick, rotate=90 ] (1, 2.93)--(1 ,3.42); 
   \draw[very thick, rotate=90] (0.695,3.4) ellipse (0.305 and 0.12);
  \draw[very thick, rotate=90] (-0.695,3.4) ellipse (0.305 and 0.12);
  \draw[very thick, rotate=90  ] (0.39, 3.4) arc (-14:-170:0.4);
   \draw[very thick, rotate=180] (-1, 2.93)--(-1 ,3.42);
   \draw[very thick,  rotate=180] (1, 2.93)--(1 ,3.42); 
   \draw[very thick, rotate=180] (0.695,3.4) ellipse (0.305 and 0.12);
  \draw[very thick, rotate=180] (-0.695,3.4) ellipse (0.305 and 0.12);
  \draw[very thick, rotate=180 ] (0.39, 3.4) arc (-14:-170:0.4);
   \draw[very thick, rotate=-90 ] (-1, 2.93)--(-1 ,3.42);
   \draw[very thick, rotate=-90 ] (1, 2.93)--(1 ,3.42); 
   \draw[very thick, rotate=-90] (0.695,3.4) ellipse (0.305 and 0.12);
  \draw[very thick, rotate=-90] (-0.695,3.4) ellipse (0.305 and 0.12);
  \draw[very thick, rotate=-90] (0.39, 3.4) arc (-14:-170:0.4);

  \draw[very thick, gray] (0,4.6) ellipse (0.28 and 0.07);
  \draw[very thick, gray] (1,3.8)..controls (0.98, 4.3) and (0.5,4.55)..(0.28,4.59);
  \draw[very thick, gray] (-1,3.8)..controls (-0.98, 4.3) and (-0.5,4.55)..(-0.28,4.59);
 \draw[very thick, gray] (-0.385 ,3.8)..controls (-0.3,4.2) and (0.3,4.2)..(0.385,3.8);
\draw[very thick, gray] (0.384 ,3.815)..controls (0.394,3.63) and (0.99,3.63)..(1,3.82);
\draw[very thick, dashed, gray] (0.384 ,3.815)..controls (0.394,4) and (0.99,4)..(1,3.82);
\draw[very thick, gray] (-0.384 ,3.815)..controls (-0.394,3.63) and (-0.99,3.63)..(-1,3.82);
\draw[very thick, dashed, gray] (-0.384 ,3.815)..controls (-0.394,4) and (-0.99,4)..(-1,3.82);
\draw[very thick, gray, rotate=90 ] (-1 ,3.8)..controls (-0.9,4.9) and (0.9,4.9)..(1,3.8);
 \draw[very thick, gray, rotate=90] (-0.385 ,3.8)..controls (-0.3,4.2) and (0.3,4.2)..(0.385,3.8);
\draw[very thick, gray, rotate=90] (0.384 ,3.815)..controls (0.394,3.63) and (0.99,3.63)..(1,3.82);
\draw[very thick, dashed, gray, rotate=90] (0.384 ,3.815)..controls (0.394,4) and (0.99,4)..(1,3.82);
\draw[very thick, gray, rotate=90] (-0.384 ,3.815)..controls (-0.394,3.63) and (-0.99,3.63)..(-1,3.82);
\draw[very thick, dashed, gray, rotate=90] (-0.384 ,3.815)..controls (-0.394,4) and (-0.99,4)..(-1,3.82);
\draw[very thick, gray, rotate=180 ] (-1 ,3.8)..controls (-0.9,4.9) and (0.9,4.9)..(1,3.8);
 \draw[very thick, gray, rotate=180] (-0.385 ,3.8)..controls (-0.3,4.2) and (0.3,4.2)..(0.385,3.8);
\draw[very thick, gray, rotate=180] (0.384 ,3.815)..controls (0.394,3.63) and (0.99,3.63)..(1,3.82);
\draw[very thick, dashed, gray, rotate=180] (0.384 ,3.815)..controls (0.394,4) and (0.99,4)..(1,3.82);
\draw[very thick, gray, rotate=180] (-0.384 ,3.815)..controls (-0.394,3.63) and (-0.99,3.63)..(-1,3.82);
\draw[very thick, dashed, gray, rotate=180] (-0.384 ,3.815)..controls (-0.394,4) and (-0.99,4)..(-1,3.82);
\draw[very thick, gray, rotate=-90 ] (-1 ,3.8)..controls (-0.9,4.9) and (0.9,4.9)..(1,3.8);
 \draw[very thick, gray, rotate=-90] (-0.385 ,3.8)..controls (-0.3,4.2) and (0.3,4.2)..(0.385,3.8);
\draw[very thick, gray, rotate=-90] (0.384 ,3.815)..controls (0.394,3.63) and (0.99,3.63)..(1,3.82);
\draw[very thick, dashed, gray, rotate=-90] (0.384 ,3.815)..controls (0.394,4) and (0.99,4)..(1,3.82);
\draw[very thick, gray, rotate=-90] (-0.384 ,3.815)..controls (-0.394,3.63) and (-0.99,3.63)..(-1,3.82);
\draw[very thick, dashed, gray, rotate=-90] (-0.384 ,3.815)..controls (-0.394,4) and (-0.99,4)..(-1,3.82);

  \draw[thick, red, dashed, rounded corners=3pt, rotate=55] (1,0).. controls (1.6, -0.2) and (2.5,-0.2) ..(3.12,0);
 \draw[thick, red, dashed, rounded corners=3pt, rotate=0] (-1,2.95).. controls (-0.8, 2.95) and (-0.5,3) ..(-0.23,3.2);
 \draw[thick, red,  rounded corners=3pt, rotate=0] (-0.23,3.16).. controls (0.5, 2.5) and (1,2.5) ..(1.75,2.55);
 \draw[thick, red,  rounded corners=3pt, rotate=0] (-1,2.92).. controls (-0.5, 2.4) and (0,2.4) ..(0.52,0.84);
 
 \draw[thick, red, dashed, rounded corners=3pt, rotate=145] (1,0).. controls (1.6, -0.2) and (2.5,-0.2) ..(3.12,0);
 \draw[thick, red, dashed, rounded corners=3pt, rotate=90] (-1,2.95).. controls (-0.8, 2.95) and (-0.5,3) ..(-0.23,3.2);
 \draw[thick, red,  rounded corners=3pt, rotate=90] (-0.23,3.16).. controls (0.5, 2.5) and (1,2.5) ..(1.75,2.55);
 \draw[thick, red,  rounded corners=3pt, rotate=90] (-1,2.92).. controls (-0.5, 2.4) and (0,2.4) ..(0.52,0.84);

 \draw[thick, red, dashed, rounded corners=3pt, rotate=235] (1,0).. controls (1.6, -0.2) and (2.5,-0.2) ..(3.12,0);
 \draw[thick, red, dashed, rounded corners=3pt, rotate=180] (-1,2.95).. controls (-0.8, 2.95) and (-0.5,3) ..(-0.23,3.2);
 \draw[thick, red,  rounded corners=3pt, rotate=180] (-0.23,3.16).. controls (0.5, 2.5) and (1,2.5) ..(1.75,2.55);
 \draw[thick, red,  rounded corners=3pt, rotate=180] (-1,2.92).. controls (-0.5, 2.4) and (0,2.4) ..(0.52,0.84);

 \draw[thick, red, dashed, rounded corners=3pt, rotate=325] (1,0).. controls (1.6, -0.2) and (2.5,-0.2) ..(3.12,0);
 \draw[thick, red, dashed, rounded corners=3pt, rotate=270] (-1,2.95).. controls (-0.8, 2.95) and (-0.5,3) ..(-0.23,3.2);
 \draw[thick, red,  rounded corners=3pt, rotate=270] (-0.23,3.16).. controls (0.5, 2.5) and (1,2.5) ..(1.75,2.55);
 \draw[thick, red,  rounded corners=3pt, rotate=270] (-1,2.92).. controls (-0.5, 2.4) and (0,2.4) ..(0.52,0.84);

\draw[thick, blue, rounded corners=3pt, rotate=0] (1,0).. controls (1.6, -0.1) and (2.5,-0.1) ..(3.12,0);
 \draw[thick, blue,  dashed, rounded corners=3pt, rotate=0] (1,0.02).. controls (1.6, 0.1) and (2.5,0.1) ..(3.12,0.02);
 \draw[thick, blue, rounded corners=3pt, rotate=90] (1,0).. controls (1.6, -0.1) and (2.5,-0.1) ..(3.12,0);
 \draw[thick, blue,  dashed, rounded corners=3pt, rotate=90] (1,0.02).. controls (1.6, 0.1) and (2.5,0.1) ..(3.12,0.02);
 \draw[thick, blue, rounded corners=3pt, rotate=180] (1,0).. controls (1.6, -0.1) and (2.5,-0.1) ..(3.12,0);
 \draw[thick, blue,  dashed, rounded corners=3pt, rotate=180] (1,0.02).. controls (1.6, 0.1) and (2.5,0.1) ..(3.12,0.02);
 \draw[thick, blue, rounded corners=3pt, rotate=270] (1,0).. controls (1.6, -0.1) and (2.5,-0.1) ..(3.12,0);
 \draw[thick, blue,  dashed, rounded corners=3pt, rotate=270] (1,0.02).. controls (1.6, 0.1) and (2.5,0.1) ..(3.12,0.02);
 
 \node[scale=0.8] at (-2.9,1.9) {$x_1$};
 \node[scale=0.8] at (-2,-2.8) {$x_2$};
 \node[scale=0.8] at (2.9,-1.9) {$x_3$};
 \node[scale=0.8] at (2,2.8) {$x_4$};
 \node[scale=0.8] at (-1.4,-0.4) {$y_1$};
 \node[scale=0.8] at (0.45,-1.5) {$y_2$};
 \node[scale=0.8] at (1.4,0.4) {$y_3$};
 \node[scale=0.8] at (-0.45,1.5) {$y_4$};
\end{scope}
\end{tikzpicture}
\caption{The curves on $\Sigma_5^1$.} \label{fig:(5,4)icin}
\end{figure}
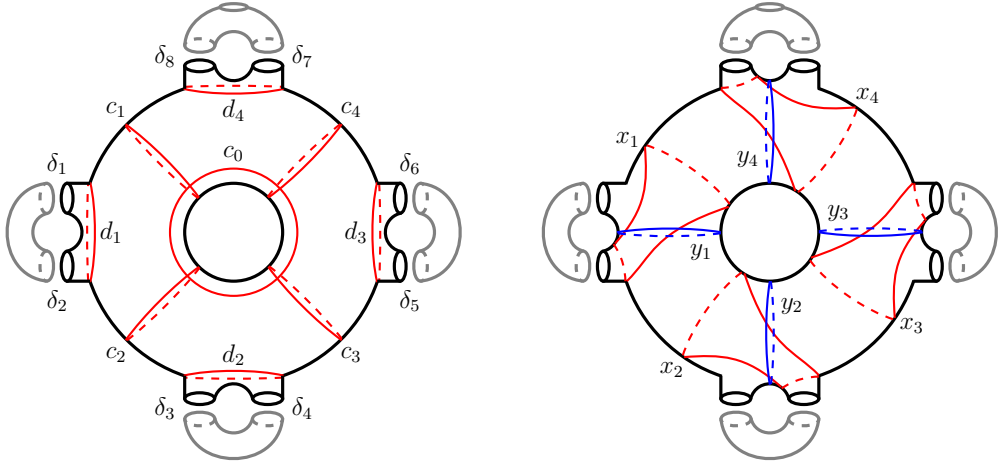
\smallskip

In the following, let us denote $t_{c_i}$ by $t_i$ for simplicity.   From  the $4$-holed torus relation, we have the following in $\M(\Sigma_5^1)$:
 \begin{eqnarray*}
  t_{d_1}  t_{d_2}  t_{d_3 }  t_{d_4} 
	&=&  t_0t_1t_3 t_0t_2t_4t_0t_1t_3t_0t_2t_4\\
	&=&  t_0    t_0^{t_1t_3 t_0t_2t_4}  t_1t_3 t_0t_2t_4 t_1t_3t_0t_2t_4\\
	&=&  t_0    t_0^{t_1t_3 t_0t_2t_4}   t_0^{t_1t_3} t_1^2t_3^2 t_2t_4  t_0t_2t_4\\
	&=&  t_0    t_0^{t_1t_3 t_0t_2t_4}   t_0^{t_1t_3}   t_0^{t_1^2t_3^2 t_2t_4}t_1^2t_2^2 t_3^2t_4^2\\
	&=& t_0 t_{v_0} t_{v_1}t_{v_2} t_1^2t_2^2 t_3^2t_4^2,
 \end{eqnarray*}
where $v_0=t_1t_3t_0t_2t_4 (c_0)$, $v_1= t_1t_3(c_0)$ and 
 $v_2=t_1^2t_3^2t_2 t_4(c_0)$.  Note that $c_0$ and $v_0$ cobound a  subsurface $\Sigma_2^2$ in $\Sigma_5^1$.  We multiply both sides of this equality by 
 $t_{\delta_1}t_{\delta_2}\cdots t_{\delta_8}$ and, for $i=1,2,3,4$,  apply the four lantern relations of the form
 \[
 t_it_{i+1}t_{\delta_{2i-1}}t_{\delta_{2i}} =t_{y_i}t_{x_i}t_{d_i} \,  , 
 \]
 with the agreement that $t_5=t_1$ to get 
  \begin{align*}
   t_{d_1}  t_{d_2}  t_{d_3 }  t_{d_4}& t_{\delta_1}t_{\delta_2}t_{\delta_3}t_{\delta_4}t_{\delta_5}t_{\delta_6}t_{\delta_7} t_{\delta_8}\\
  &= t_0 t_{v_0} t_{v_1}t_{v_2} \cdot t_1t_2t_{\delta_1}t_{\delta_2}
      	\cdot t_2t_3 t_{\delta_3}t_{\delta_4} \cdot t_3t_4t_{\delta_5}t_{\delta_6}
	\cdot t_4t_1 t_{\delta_7}t_{\delta_8}\\
  &= t_0 t_{v_0} t_{v_1}t_{v_2} \cdot t_{y_1}t_{x_1}t_{d_1}
      	\cdot t_{y_2}t_{x_2}t_{d_2} \cdot t_{y_3}t_{x_3}t_{d_3}
	\cdot t_{y_4}t_{x_4}t_{d_4}\\
  &= t_0 t_{v_0} t_{v_1}t_{v_2} \cdot t_{y_1}t_{x_1}
      	\cdot t_{y_2}t_{x_2} \cdot t_{y_3}t_{x_3}
	\cdot t_{y_4}t_{x_4}\cdot t_{d_1} t_{d_2}  t_{d_3} t_{d_4}.
 \end{align*}
Now,  cancelling all $t_{d_i}$ gives  
  \begin{align*}
	t_{\delta_1}t_{\delta_2}t_{\delta_3}t_{\delta_4}t_{\delta_5}t_{\delta_6}
		t_{\delta_7} t_{\delta_8}
  	&= t_0 t_{v_0} t_{v_1}t_{v_2} \cdot t_{y_1}t_{x_1}
      		\cdot t_{y_2}t_{x_2} \cdot t_{y_3}t_{x_3}\cdot t_{y_4}t_{x_4}\\
  	&= t_0 t_{v_0} t_{v_1}t_{v_2} \cdot t_{y_1} t_{y_2}t_{y_3} t_{y_4}
      		\cdot t_{x_1} t_{x_2}t_{x_3}t_{x_4} \, .
 \end{align*}
 Supported in $\Sigma_1^8$, this is \emph{an $8$-holed torus relation} (cf. \cite{KorkmazOzbagci2,  Hamada}).  

 Hence,  using \eqref{1comm} and Lemma~\ref{lem:g=5}, we have  
 \begin{align*}
  	1&= t_0 t_{v_0} t_{v_1}t_{v_2} 
	\cdot t_{y_1} t_{y_2}t_{y_3} t_{y_4} t_{\delta_1}^{-1}t_{\delta_3}^{-1}t_{\delta_5}^{-1}t_{\delta_7}^{-1}
      		\cdot t_{x_1} t_{x_2}t_{x_3}t_{x_4} t_{\delta_2}^{-1}t_{\delta_4}^{-1}t_{\delta_6}^{-1}t_{\delta_8}^{-1}\\
  	&= (t_0 t_{v_0}t_{\delta_1}^2) \cdot (t_{v_1}t_{\delta_1}^{-1} t_{\delta_1}^{-1} t_{v_2})\\
	&\hspace*{2.5cm} \cdot (t_{y_1} t_{y_2}t_{y_3} t_{y_4} t_{\delta_1}^{-1}t_{\delta_3}^{-1}t_{\delta_3}^{-1}t_{\delta_7}^{-1}
      		\cdot t_{x_1} t_{x_2}t_{x_3}t_{x_4} t_{\delta_2}^{-1}t_{\delta_4}^{-1}t_{\delta_6}^{-1}t_{\delta_8}^{-1})\\
	&= C_1C_2\cdot C_3 \cdot C_4 \, ,
 \end{align*}
in $\M(\Sigma_5^1)$,  where all $C_i$ are commutators.  Here we have used the fact that there
are two diffeomorphisms of $\Sigma_5^1$ such that 
one maps $(v_1,\delta_1)$ to $(\delta_1, v_2)$, and the other maps
$(y_1,y_2,y_3,y_4,\delta_1,\delta_3,\delta_5,\delta_7)$ to 
$(\delta_2,\delta_4,\delta_6,\delta_8,x_1,x_2,x_3,x_4)$. (We note that $\delta_i=\delta_{i+1}$
for $i=1,3,5$.)

Keeping track of the involved basic relators,  one can easily see that the $8$-holed torus relator has $\sigma=-4+4(+1)=0$. Since the relator  $t_0^{-1} t_{v_0}^{-1}t_{\delta_1}^{-2}\cdot C_1C_2=1$ from Lemma~\ref{lem:g=5} has  $\sigma=4$,  we then conclude that the relator 
$C_1C_2C_3C_4=1$ in $\M(\Sigma_5^1)$  has $\sigma=0+4=4$.  This prescribes a $\Sigma_5$-- bundle over $\Sigma_4$  with $\sigma=4$ and a section of self-intersection zero.  In turn, we get $\Sigma_g$--bundle  over $\Sigma_4$ with $\sigma=4$ (and a section of self-intersection zero) for any $g \geq 5$.

\bigskip
 \noindent \underline{\textit{$g=4$ and $h=4$}}:  We next construct a $\Sigma_4$--bundle over $\Sigma_4$ \emph{without a section of self-intersection zero}.  By \cite[Theorem 1.3]{CataneseEtal}, there exists a genus--$4$  semi-simple holomorphic fibration over $T^2$,  with exactly two singular fibers, each consisting of a genus--$2$ curve and an elliptic curve meeting transversally in two points.  The total space of this fibration is smoothly a product $\Sigma_2 \times \Sigma_2$,  so it has $\sigma=0$.  Thus, we have a relator
\[
t_{a_1} t_{a_2} t_{b_1} t_{b_2}  C_4 =1 \ \ \ \text{ in } \M(\Sigma_4)
\]
with $\sigma=0$, where $C_4$ is a commutator.  Since there exists an $F \in \textrm{Diff}^+(\Sigma_4)$ with  $F(a_i)=b_i$,   we can rewrite the above relator as
\begin{align*}
1
  	&= t_{a_1} t_{a_2} (t_{a_1} t_{a_2})^F  C_4 \\
  	&= (t_{a_1} t_{a_2})^2 [(t_{a_1} t_{a_2})^{-1}, F] \,  C_4 \\
  	 &= (t_{a_1}^2 t_{a_2}^2) \cdot C_3 C_4 \, ,
 \end{align*}
where we have set $C_3:= [(t_{a_1} t_{a_2})^{-1}, F] $.  We can now invoke Lemma~\ref{lem:genus=2} to replace the first factor and get the relator
\[
C_1 C_2 C_3  C_4 =1 \ \ \ \text{ in } \M(\Sigma_4)
\]
which has $\sigma=0+4=4$.

\bigskip
 \noindent \underline{\textit{$h=3$, $g \geq 9$ and $h=2$,  $g \geq 15$}}: \,Consider the $\Sigma_3$--bundle over $\Sigma_5$ we constructed above,  which has $\sigma=4$ and a section of self-intersection zero.  By Theorem~\ref{thm:tsuboish},  we can derive two more bundles from it: a $\Sigma_{15}$--bundle over $\Sigma_2$ and a $\Sigma_{9}$--bundle over $\Sigma_3$,  both also with $\sigma=4$ and sections of self-intersection zero.  In turn, we get a $\Sigma_g$--bundle over $\Sigma_2$ with $\sigma=4$ for any $g \geq 15$, and a $\Sigma_g$--bundle over $\Sigma_3$ with $\sigma=4$ for any $g \geq 9$.

\smallskip
All the surface bundles we constructed can be equipped with a Thurston symplectic form.   This completes the proof of Theorem~\ref{main}.  $\QED$

\smallskip
\begin{remark} \label{rk:prior}
In \cite{EKKOS},  Endo, Kotschick, Ozbagci, Stipsicz and the second author constructed surface bundles with positive signatures for all $g \geq 3$ and $h \geq 9$.  On the other hand,  Bryan and Donagi established in  \cite{BryanDonagi} that the base genus of a positive signature surface bundle could be as small as $2$.  These results were partially improved by Lee in \cite{Lee}, and most successfully by Monden in \cite{Monden}, who in particular produced positive signature surface bundles for all $g \geq 39$ and $h = 2$.
\end{remark}

\begin{remark} \label{Nonholomorphic}
Following the recipe of \cite{BaykurNonholomorphic},  our surface bundles yield further examples of \emph{non-holomorphic} surface bundles over surfaces with positive signatures at least for every  \,$g \geq 16$, $h=2$; \ $g \geq 10$, $h=3$; \ $g \geq 6$, $h =4$; $g \geq 3 $, $h = 5$ and $g \geq 3 $, $h  \geq 6$.  In particular,  we answer the question of existence of non-holomorphic \mbox{$\Sigma_g$--bundles} over $\Sigma_h$ with $\sigma \neq 0$ for all but finitely many pairs of $(g,h)$.  The signatures for all of these non-holomorphic examples can be chosen to be $4$.  In fact, to the best of our knowledge,  there are no examples of holomorphic surface bundles with   $\sigma=4$.  Is there an obstruction?
\end{remark}

\begin{remark} \label{highersignature}
While the techniques of our paper can possibly be employed to generate surface bundles with high \emph{Chern slope} $c_1^2 / c_2$ (equivalently, high $\sigma/ \eu$ ratio) we do not currently have any examples with slopes higher than the ones obtained by Catanese and Rollenske in  \cite{CataneseRollenske},  making it all the more curious  whether the Chern numbers of any (symplectic) surface bundle over a surface always satisfy $c_1^2 / c_2 \leq 2+ 2/3$. 
\end{remark}

\vspace{0.2in}
\noindent \textit{Acknowledgements. }
The authors would like to thank the Max Planck Institute for Mathematics in Bonn,  where this work was completed during their stay. The first author was  supported by the Simons Foundation Grant 634309 and the National Science Foundation grant DMS-2005327.

\clearpage

\end{document}